\newtheorem{theorem}{Theorem}[section]
\newtheorem{lemma}[theorem]{Lemma}
\newtheorem{proposition}[theorem]{Proposition}
\newtheorem{definition}[theorem]{Definition}
\newtheorem{example}[theorem]{Example}
\newtheorem{hypotheses}[theorem]{Hypotheses}
\newtheorem{remark}[theorem]{Remark}
\numberwithin{equation}{section}
\def\sqr#1#2{{\vcenter{\vbox{\hrule height .#2pt \hbox{\vrule
 width .#2pt height#1pt \kern#1pt \vrule
width .#2pt} \hrule height .#2pt}}}}
\def\ds{\begin{displaystyle}}
\def\eds{\end{displaystyle}}
\def\<{\langle }
\def\>{\rangle }
\def\R{\mathbb R}
\def\N{\mathbb N}
\begin{document}

\title[Young equations with singularities]{Young equations with singularities}

\author[D. Addona]{Davide Addona}
\address{D.A.: Dipartimento di Scienze Matematiche, Fisiche e Informatiche, Plesso di Matema\-ti\-ca, Universit\`a degli Studi di Parma, Viale Parco Area delle Scienze 53/A, I-43124 Parma, Italy}
\author[L. Lorenzi]{Luca Lorenzi}
\address{L.L.: Dipartimento di Scienze Matematiche, Fisiche e Informatiche, Plesso di Matema\-ti\-ca, Universit\`a degli Studi di Parma, Viale Parco Area delle Scienze 53/A, I-43124 Parma, Italy}
\author[G. Tessitore]{Gianmario Tessitore}
\address{G.T.: Dipartimento di Matematica e Applicazioni, Universit\`a degli Studi di Milano-Bicocca, Milano, Via R. Cozzi 55, I-20126 Milano Italy}
\email{davide.addona@unipr.it}
\email{luca.lorenzi@unipr.it}
\email{gianmario.tessitore@unimib.it}

\maketitle
\begin{abstract}
In this paper we prove existence and uniqueness of a mild solution to the Young equation $dy(t)=Ay(t)dt+\sigma(y(t))dx(t)$, $t\in[0,T]$, $y(0)=\psi$. Here, $A$ is an unbounded operator which generates a semigroup of bounded linear operators  $(S(t))_{t\geq0}$ on a Banach space $X$, $x$ is a real-valued $\eta$-H\"older continuous. Our aim is to reduce, in comparison to \cite{GLT06}  and \cite{ALT} (see also \cite{DGT12,GT10}), the regularity requirement on the initial datum $\psi$ eventually dropping it.

The main tool is the definition of a sewing map for a new class of increments which allows the construction of a Young convolution integral in a general interval $[a,b]\subset \R$ when the $X_\alpha$-norm of the function under the integral sign blows up approaching $a$ and $X_{\alpha}$ is an intermediate space between $X$ and $D(A)$. 
\end{abstract}

$$ $$

\noindent \textit{Mathematics Subject Classification:} Primary: 35R60; Secondary: 60H05, 60H15, 47D06.

\noindent\textit{Keywords:} Nonlinear Young equations, Mild solutions and their smoothness, 
Semigroups of bounded operators, 
Singular convolution integral.

\section{Introduction}
This paper is devoted to the study of the existence, uniqueness and regularity of the mild solution to the non-linear evolution problem
\begin{align}
\label{intro_nonlin_pb}
dy(t)=Ay(t)dt+\sigma(y(t))dx(t), \qquad t\in[0,T], \quad y_0=\psi,   
\end{align}
when the operator $A:D(A)\subset X\rightarrow X$ generates a semigroup of linear bounded operators $(S(t))_{t\geq0}$ on $X$, with smoothing effects, and $x$ is a $\eta$-H\"older continuous function with values in a suitable space. By mild solution of \eqref{intro_nonlin_pb} we mean a function $y$ which satisfies the equation
\begin{align}
\label{intro_mild_sol}
y(t)=S(t)\psi+\int_0^tS(t-r)\sigma(y(r))dx(r), \qquad t\in[0,T].
\end{align}
We stress that, since $x$ has not finite variation, the integral appearing in the above formula has to be intended in a non-obvious sense. 
Integrals such as
\begin{align}
\label{intro_stie_int}
    \int_s^t y(r)dx(r),
\end{align}
when neither $x$ or $y$ have bounded variation, have been introduced by L.C. Young in \cite{You36} when $x$ and $y$ have finite $p$-variation\footnote{A function $z:[a,b]\to X$ has finite $r$-variation if the supremum of $\sum_{j=1}^n\|z(t_j)-z(t_{j-1})\|^r$ over all the partitions $\Pi=\{a=t_0<t_1<\ldots<t_n=b\}$ of $[a,b]$ is finite.} and $q$-variation, respectively, with $\frac{1}{p}+\frac{1}{q}>1$ (equivalently, when $y$ is $\beta$-H\"older continuous and $x$ is $\eta$-H\"older continuous with $\beta+\eta>1$), and are therefore called Young integrals.  In \cite{Lyo98}, T. Lyons extended the definition of the integral in \eqref{intro_stie_int} when $\frac{1}p+\frac1q<1$ (i.e., when $\beta+\eta<1$) by adding some ``structure'' to the irregular path $x$ giving birth to the rough-paths theory. A different approach has been developed by M. Gubinelli in \cite{G04}, where the rough integral is defined as the unique solution of an algebraic problem under some analytic conditions.
Here, we follow this approach and we limit ourselves to solve \eqref{intro_mild_sol} in the Young case.

\medskip
The evolution equation \eqref{intro_nonlin_pb} in infinite dimensional spaces was first addressed in \cite{GLT06}. It is clear that the key point consists of the definition of the convolution integral
\begin{align}
\label{intro_conv_int}
\int_s^tS(t-r)\varphi(r)dx(r)    
\end{align}
for suitable functions $\varphi$ and H\"older continuous path $x$ defined on $[0,T]$. This is done in  \cite{GLT06,GT10} by adapting the sewing map approach to the case of convolutions. Given $A:[a,b]^2_<\rightarrow X$, where $[a,b]^2_<:=\{(s,t):a\leq s\leq t\leq b\}$, the 
authors of \cite{GLT06,GT10} defined the `convolutional' increment $(\hat\delta A)(r,s,t):=A(r,t)-A(s,t)-S(t-s)A(r,s)$ for $a\leq r\leq s\leq t\leq b$ and proved  that, if $\|(\hat\delta A)(r,s,t)\|_X\leq C|t-r|^\mu$ for every $a\le r\le s\le t\le b$ and some constants $C>0$ and $\mu>1$, then the limit
\begin{align}\label{intro_Rieman_1}
R(s,t):=\lim_{|\Pi(s,t)|\rightarrow 0}\sum_{i=1}^n S(t-t_i)A(t_{i-1},t_i),   \qquad\;\, 
(s,t)\in[a,b^2]_<,
\end{align}
exists (here $\Pi(s,t)$ is a partition of $[s,t]$ and $|\Pi(s,t)|$ is its mesh).
The convolution integral can then be defined by:
\begin{align}
\label{intro_def_conv_int_gub}
\mathscr{I}(s,t):=A(s,t)-R(s,t), \qquad\;\, (s,t)\in[a,b]^2_<,  \end{align}
and, with the special choice $A(s,t)=S(t-s)\varphi(s)(x(t)-x(s))$ for every $a\le r\le s\le t\le b$, where $x$ is a real-valued $\eta$-H\"older continuous function and $\varphi:[a,b]\rightarrow X$ verifies $\|\varphi(t)-S(t-s)\varphi(s)\|_X\leq C|t-s|^\beta$ for every $a\le s\le t\le b$ and some $\beta>0$, with $\eta+\beta>1$, the above construction  suggests to set
\begin{align*}
\int_s^t S(t-r)\varphi(r)dx(r):=\mathscr{I}(s,t),\qquad\;\, (s,t)\in[a,b]^2_<.
\end{align*}
Once the convolution integral in \eqref{intro_conv_int} is defined, its properties allow proving existence and uniqueness of the mild solution to \eqref{intro_nonlin_pb} by means of a fixed-point argument. We underline that a crucial assumption in  \cite{GLT06,GT10} is that the initial datum $\psi$ belongs to an intermediate space $X_\alpha$ between $X$ and $D(A)$ with $\eta+\alpha>1$. Moreover, the solution lives in the same space $X_\alpha$ with no gain of regularity. On the contrary, in the classical case, see for instance \cite{lor-rha} or  \cite{LU95}, i.e., when $x$ is smooth and $A$ generates an analytic semigroup, then no regularity assumptions on $\psi$ are needed (in particular, we can choose $\psi\in X$). Moreover, if  the intermediate spaces are interpolation spaces of indexes $\alpha$ and $\infty$ and the initial datum belongs to any of such spaces, then $y(t)\in D(A)$ for every $t\in(0,T]$ and $y$ satisfies \eqref{intro_nonlin_pb} in the original differential form.

This consideration was the starting point of \cite{ALT}, where the second of the limitations described above is tackled. To be more specific, in \cite{ALT} equation \eqref{intro_mild_sol} is studied when $x$ is a real-valued $\eta$-H\"older continuous path with $\eta>\frac12$. We showed that the mild solution $y$ to \eqref{intro_nonlin_pb} becomes more regular as soon as it leaves 0, that is, still under the assumption that $\psi\in X_{\alpha}$ with $\alpha+\eta>1$, we proved that $y(t)\in D(A)$ for every $t\in(0,T]$. Moreover, an estimate of the blow-up of $\|y(t)\|_{D(A)}$ when $t$ approaches $0$ has been proved. Thanks to this result, we also provided an integral representation of the solution $y$, which yields a chain rule for $F\circ y$ when $F$ is a smooth function.

\medskip
The aim of the present paper is to overcome the first of the two limitations described above, namely the regularity request  $\psi\in X_\alpha$, with $\alpha+\eta>1$, on the initial datum $\psi$. We investigate here the properties of the mild equation  \eqref{intro_nonlin_pb} when $\psi$ belongs to a larger space $X_\theta$ with $0\leq\theta<\alpha$. Under such weaker assumptions, the function $t\mapsto \|S(t)\psi\|_\alpha$ has a singularity at $0$, thus the definition of the convolution integral \eqref{intro_conv_int} has to be extended to the case when the function $f$ has a singularity at $0$. First,  we modify the construction of the sewing map introducing a different notion of increment:  given $g:[a,b]^2_<\to X$, we set $(\delta_S g)(r,s,t):=S(s-r)g(r,t)+g(s,t)-S(s-r)g(r,s)$ for every $a\leq r\leq s\leq t\leq b$ and prove, see Proposition \ref{prop:new_sew_map},  that the limit
\begin{align}\label{intro_Rieman_2}
R(s,t):=\lim_{|\Pi(s,t)|\rightarrow 0}\sum_{i=1}^n S(t-t_{i-1})g(t_{i-1},t_i), \qquad\;\, (s,t)\in[a,b]^2_<,  \end{align}
exists whenever  $\|(\delta_S g)(r,s,t)\|_X\leq C|t-r|^\mu$ for every $a\le r\le s\le t\le b$ and some constants $C>0$ and $\mu>1$.
We notice that in \eqref{intro_Rieman_2}, differently from what happens in \eqref{intro_Rieman_1}, the semigroup $(S(t))_{t\ge 0}$ is never evaluated at $0$, since $t_{i-1}<t=t_n$ for every $i=1,\ldots,n$. This allows exploiting the regularizing properties of $(S(t))_{t\ge 0}$ in order to deduce regularizing properties for the convolution integral 
$\mathscr{I}$ defined as in \eqref{intro_def_conv_int_gub}. 

Then, see Theorem \ref{thm:sew_map_sing_int}, we go further and, taking advantage of the regularity of the convolution integral, we extend the definition of \eqref{intro_conv_int} when the
continuous function $\varphi:(a,b]\to X_{\alpha}$ has a singularity of order $\gamma>0$ at $a$ and there exist constants $C>0$, $\rho>1-\eta$, $\beta\in[0,\alpha]$ and $\gamma\in(0,1)$ such that  $\|\varphi(t)-S(t-s)\varphi(s)\|_{X_{\beta}}\leq C|t-s|^{\rho}|s-a|^{-\gamma}$ for every $a<s\le t\le b$. Roughly speaking, we use Proposition \ref{prop:new_sew_map} to define the value of $\int_s^t S(t-r)\varphi(r)dx(r)$ when
$(s,t)\in [a+\theta,b]^2_<$, for every $\theta\in(0,b-a)$, and then we exploit its regularity in order to extend it up to $s=a$. We also prove estimate \eqref{stima-utilee} which is an essential tool when dealing with equation \eqref{intro_mild_sol}.

Eventually, taking into account estimates  \eqref{stima-utilee} on the singular convolution integral, we are in a position prove existence, uniqueness and smoothness of the mild solution to \eqref{intro_nonlin_pb} with $x\in C^\eta([0,T])$, $\eta\in(1/2,1)$, and general initial datum $\psi\in X_\theta$ with, possibly, $\eta+\theta<1$.  This is firstly done in Subsection \ref{subsect-3.1}, when, besides other technical assumptions (see Hypotheses \ref{ip_nonlineare}), $\sigma$ is globally Lipschitz continuous on a regular space $X_{\alpha}$ with $\alpha + \eta >1$. Then,  see Subsection  \ref{subsect-3.2}, we allow $\sigma$ to be only locally Lipschitz continuous in $X_{\alpha}$. This framework turns out to be suitable to treat parabolic equations in spaces of continuous functions, see Example \ref{example-3.14}. Finally, in Subsection \ref{subsect-3.3}, we  drop the above-mentioned local Lipschitzianity request on $\sigma$ and only require that $\sigma' $ is locally Lipschitz continuous from $X_{\alpha}$ to the larger space $X$. This allows treating, for instance, one-dimensional parabolic equations in $L^2$-spaces, see Example \ref{example-3.18}.

  \smallskip
The paper is organized as follows.
In Section \ref{sect-2}, we first state the general assumptions and describe the functional spaces that will be used in the paper. In Subsection \ref{subsect-2.2}, we present our version of the sewing map,
construct the convolution integral and study its regularity. Next, in Subsection \ref{subsect-2.3} we extend the above results to the case of functions with singularities. Finally, 
Section \ref{sect-3} is devoted to the study of equation \eqref{intro_mild_sol}.

\medskip
\paragraph{\bf Notation.} 
For every $a,b\in\R$, with $a<b$, we set  $[a,b]^n_<:=\{(t_1,\ldots,t_n):a\leq t_1\le t_2\le\ldots\le t_n\leq b\}$ and $(a,b]^n_<:=\{(t_1,\ldots,t_n):a<t_1\le t_2\le\ldots\le t_n\leq b\}$. By $\Pi(a,b)$ we denote a partition of the interval $[a,b]$, i.e., $\Pi(a,b):=\{t_0=a<t_1<\ldots<t_n=b\}$ for some $n\in\N$. $|\Pi(a,b)|$ is the amplitude of the partition, i.e., $|\Pi(a,b)|:=\max_{i=1,\ldots,n}|t_i-t_{i-1}|$, and the limit $|\Pi(a,b)|\rightarrow 0$ is meant as limit on direct sets.
Finally, we denote by $B:(0,\infty)\times (0,\infty)\to\R$ the Euler $\beta$-function defined by $B(\alpha,\beta)=\displaystyle\int_0^1t^{\alpha-1}(1-t)^{\beta-1}dt$ for every  $\alpha,\beta\in (0,\infty)$.

\section{The sewing map and the convolution integral for singular functions}
\label{sect-2}
\subsection{Main assumptions, spaces of functions and increments}
\label{subsect-2.1}
\begin{hypotheses}
\label{hyp-main}
\begin{enumerate}[\rm (i)]
\item
$A:D(A)\subset X\to X$ is the generator a semigroup of bounded operators $(S(t))_{t\geq0}$ on the Banach space $X$.
\item
For every $\lambda\in [0,3)$, there exists a space $X_{\lambda}$ $($with the convention that $X_0=X$ and $X_1=D(A))$ such that if $\beta<\lambda$ then $X_{\lambda}$ is continuously embedded into $X_\beta$. We denote by $K_{\lambda,\beta}$ a positive constant such that $\|x\|_{\beta}\le K_{\lambda,\beta}\|x\|_{\lambda}$ for every $x\in X_{\lambda}$;
\item
for every $\zeta,\lambda\in [0,3)$, with $\zeta\leq\lambda$, and $\mu,\nu\in[0,1]$, with $\mu>\nu$, there exist positive constants $L_{\zeta,\lambda,T}$, and $C_{\mu,\nu,T}$, which depend on $T$, such that\footnote{When no confusion may arise, we do not indicate the dependence of the constants on $T$.}
\begin{align}
\left\{
\begin{array}{ll}
(a)\ \|S(t)\|_{\mathscr{L}(X_\zeta, X_{\lambda})}\leq L_{\zeta,\lambda,T} t^{-\lambda+\zeta},\\[1mm]
(b) \ \|S(t)-I\|_{\mathscr{L}(X_\mu,X_\nu)}\leq C_{\mu,\nu,T} t^{\mu-\nu},
\end{array}
\right.
\label{stime_smgr}
\end{align}
for every $t\in (0,T]$;
\item 
$S(t)$ is injective for every $t\geq 0$.
\end{enumerate}
\end{hypotheses}

\begin{remark}
\label{rmk-2.2}
{\rm 
Conditions \eqref{stime_smgr} imply that the function $t\mapsto S(t)$ is continuous in $(0,+\infty)$ with values in $\mathscr{L}(X)$. Indeed, fix $t_0>0$, $t\in (t_0/2,2t_0)$ and $\lambda>0$. Using \eqref{stime_smgr}, we can estimate
\begin{align*}
\|S(t)x-S(t_0)x\|_X=&\|(S(t\vee t_0-t\wedge t_0)-I)S(t_0\wedge t)x\|_X\\
\leq & C_{\mu,0,2t_0}|t-t_0|^{\lambda}\|S(t_0\wedge t)x\|_{X_{\lambda}} \\ 
\leq & 2^{\lambda}C_{\lambda,0,2t_0}L_{0,\lambda,2t_0}t_0^{-\lambda}\|x\|_X|t-t_0|^{\lambda}
\end{align*}
for every $x\in X$ so that the function $t\mapsto S(t)$ is continuous at $t_0$ with values in $\mathscr{L}(X)$.
On the other hand, we stress that we do not require continuity of the function $t\mapsto S(t)x$ at $t=0$.}
\end{remark}

\begin{example}
If $A$ is a sectorial operator on $X$, then Hypotheses $\ref{hyp-main}$ are satisfied with $X_{\lambda}=D_A(\lambda,\infty)$ for every $\lambda\in (0,2)$, $X_2=D(A^2)$ and $X_{\lambda}=\{x\in D(A^2): A^2x\in D_A(\lambda-2,\infty)\}$ if $\lambda\in (2,3)$.
\end{example}

% \begin{definition}
% We say that a function $f:[a,b]_{<}^n\to X_{\lambda}$, for some $\lambda\in [0,2)$, is continuous on $[a,b]^n_<$ if it is continuous on $\{(t_1,\ldots,t_n):\ a\leq t_1\leq t_2\leq\ldots \leq t_n\leq b\}$ and $f(t_1,\ldots,t_n)=0$ if $t_i=t_{i+1}$ for some $i\in\{1,\ldots,n-1\}$.
% \end{definition}

Let $\mathfrak a(s,t):=S(t-s)-{\rm Id}_X$ for every $(s,t)\in[a,b]^2_<$. Following \cite{GT10}, for every $n\in\N$ we introduce the operator $\hat\delta_n:C([a,b]^n_<;X_\lambda)\rightarrow C([a,b]^{n+1}_<;X_\lambda)$ defined by
\begin{align*}
(\hat\delta_nf)(t_1,\ldots,t_{n+1})
:=& (\delta_nf)(t_1,\ldots,t_{n+1})-\mathfrak a(t_n,t_{n+1})f(t_1,\ldots,t_n) \\
:=& \sum_{j=1}^{n+1}(-1)^{n-j}f(t_1,\ldots,t_{n+1})^{\wedge j}-\mathfrak a(t_n,t_{n+1})f(t_1,\ldots,t_n) \\
:= & \sum_{j=1}^{n}(-1)^{n-j}f(t_1,\ldots,t_{n+1})^{\wedge j}-S(t_{n+1}-t_n)f(t_1,\ldots,t_n)
\end{align*}
for every $f\in C([a,b]^n_<;X_\lambda)$ and $(t_1,\ldots,t_{n+1})\in[a,b]^{n+1}_<$, where, for every $j=1,\ldots,n+1$, $(t_1,\ldots,t_{n+1})^{\wedge j}$ is the vector, with $n$ components, obtained from $(t_1,\ldots,t_{n+1})$ erasing the $j$-th element. In particular, if $n=1$, then we have 
\begin{eqnarray*}
(\hat\delta_1f)(s,t)=f(t)-f(s)-\mathfrak{a}(s,t)f(s)=f(t)-S(t-s)f(s),\qquad\;\,(s,t)\in[a,b]^2_<,
\end{eqnarray*}
and, if $n=2$, then we have
\begin{align*}
(\hat\delta_2 f)(r,s,t):=&
f(r,t)-f(r,s)-f(s,t)-\mathfrak a(s,t)f(r,s)\\
=&f(r,t)-f(s,t)-S(t-s)f(r,s)
\end{align*}
for every $(r,s,t)\in[a,b]^3_<$.

\begin{definition}
Let $\alpha$, $\beta$ and $\gamma$ be nonnnegative constants. Then,
${C}^{\beta}([a,b];X_{\alpha})$, as usual, denotes the Banach space of 
Holder continuous functions from $[a,b]$ to $X_{\alpha}$. Moreover:
\begin{enumerate}
[\rm (i)]

\item
${\mathscr C}^{\beta}([a,b]^2_{<};X_{\alpha})$ is the set of continuous functions $f:[a,b]^2_{<}\to X_{\alpha}$ such that $f(s,s)=0$ for every $s\in [a,b]$ and
\begin{align*}
\|f\|_{{\mathscr C}^{\beta}([a,b]^2_<;X_\alpha)}:=\sup_{a\le s<t\leq b}\frac{\|f(s,t)\|_{X_{\alpha}}}{(t-s)^\beta}<\infty.
\end{align*}
Similarly, ${\mathscr C}^{\beta}([a,b]^3_{<};X_{\alpha})$ is the set of continuous functions $f:[a,b]^3_{<}\to X_{\alpha}$ such that $f(r,s,t)=0$ when $r=s$ or $s=t$ and
\begin{align*}
\|f\|_{{\mathscr C}^{\beta}([a,b]^3_<;X_\alpha)}:=\sup_{a\le r<s<t\leq b}\frac{\|f(r,s,t)\|_{X_{\alpha}}}{(t-r)^\beta}<\infty.
\end{align*}
\item $\mathscr C_{-\gamma}((a,b];X_\alpha)$ is the set of continuous functions $f:(a,b]\rightarrow X_\alpha$ such that
\begin{align*}
    \|f\|_{{\mathscr C}_{-\gamma}((a,b];X_\alpha)}:=\sup_{t\in(a,b]}(t-a)^{\gamma}\|f(t)\|_{X_\alpha}<\infty.
\end{align*}
If $\gamma=0$, then we set $C_b((a,b];X_\alpha):={\mathscr C}_{0}((a,b];X_\alpha)$;
\item 
$\mathscr C_{-\gamma}^{\beta}((a,b]^2_<;X_\alpha)$ 
is the set of continuous functions $f:(a,b]^2_{<}\to X_{\alpha}$ such that $f(s,s)=0$ for every $s\in (a,b]$ and
\begin{align*}
\|f\|_{{\mathscr C}_{-\gamma}^{\beta}((a,b]^2_<;X_\alpha)}:=\sup_{a<s<t\leq b}(s-a)^{\gamma}\frac{\|f(s,t)\|_{X_{\alpha}}}{(t-s)^\beta}<\infty.
\end{align*}
Similarly,
$\mathscr C_{-\gamma}^{\beta}((a,b]^3_<;X_\alpha)$ is the set of the continuous functions $f:(a,b]^3_{<}\to X_{\alpha}$ such that $f(r,s,t)=0$ when $r=s$ or $s=t$ and
\begin{align*}
\|f\|_{{\mathscr C}_{-\gamma}^{\beta}((a,b]^3_<;X_\alpha)}:=\sup_{a<r<s<t\leq b}(r-a)^{\gamma}\frac{\|f(r,s,t)\|_{X_{\alpha}}}{(t-r)^\beta}<\infty.
\end{align*}
\end{enumerate}
\end{definition}

We introduce a new increment and an operator acting on functions $f:[a,b]^n_<\to X$.
\begin{definition}
\label{def:def_incr_comm}
For every $n\in\N$ and every function $f:[a,b]^n_<\to X$, we set:
\begin{align*}
(\delta_{S,1}f(t_1,t_2)): = & S(t_1-a)(f(t_2)-f(t_1)), \\(\delta_{S,n} f)(t_1,\ldots,t_{n+1})
:=&\sum_{i=1}^{n-1}(-1)^{n-i}f((t_1,\ldots,t_{n+1})^{\wedge i}) \\
& + S(t_{n}-t_{n-1})[f(t_1,\ldots,t_{n-1},t_{n+1})-f(t_1,\ldots,t_{n})].
\end{align*}
Further, for every $n\in\N$, $f:[a,b]_{<}^n\to X$ and $(t_1,\ldots,t_n)\in [a,b]^n_{<}$, we set
$(\mathbb{S}_1f)(t)=S(t-a)f(t)$, if $n=1$, and $(\mathbb{S}_nf)(t_1,...,t_n):=
S(t_n-t_{n-1})f(t_1,...,t_n)$, if $n\geq 2$.
\end{definition}

\begin{lemma}
\label{lemma:commutz}
For every $n\in\N$ and every function $f:[a,b]^n_<\to X$, it holds that
\begin{align}
\label{eq:commutaz}
\hat\delta_n \mathbb{S}_nf= \mathbb{S}_{n+1} \delta_{S,n} f.
\end{align}
\end{lemma}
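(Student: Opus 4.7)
The identity \eqref{eq:commutaz} is purely algebraic and should reduce, after expanding both sides, to the semigroup property $S(t)S(s)=S(t+s)$. My plan is to verify it by direct computation, treating the base case $n=1$ separately because $\mathbb{S}_1$ carries the shift $S(\cdot-a)$ tied to the left endpoint $a$, whereas for $n\ge 2$ the operator $\mathbb{S}_n$ uses the factor $S(t_n-t_{n-1})$ and depends only on the last two time variables.

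For $n=1$, writing out the definitions gives
$(\hat\delta_1\mathbb{S}_1 f)(t_1,t_2)=S(t_2-a)f(t_2)-S(t_2-t_1)S(t_1-a)f(t_1)=S(t_2-a)(f(t_2)-f(t_1))$ by the semigroup property, while $(\mathbb{S}_2\delta_{S,1}f)(t_1,t_2)=S(t_2-t_1)\cdot S(t_1-a)(f(t_2)-f(t_1))=S(t_2-a)(f(t_2)-f(t_1))$, so the two sides coincide.

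For $n\ge 2$, I would expand $(\hat\delta_n\mathbb{S}_n f)(t_1,\dots,t_{n+1})$ as the alternating sum over $j=1,\dots,n$ plus the trailing correction $-S(t_{n+1}-t_n)(\mathbb{S}_n f)(t_1,\dots,t_n)$, and sort the summands according to the last two entries of the reduced tuple $(t_1,\dots,t_{n+1})^{\wedge j}$. For $j=1,\dots,n-1$ those last two entries are still $t_n,t_{n+1}$, so each such summand acquires a factor $S(t_{n+1}-t_n)$; for $j=n$ they become $t_{n-1},t_{n+1}$, producing instead the factor $S(t_{n+1}-t_{n-1})$; the trailing correction equals $-S(t_{n+1}-t_n)S(t_n-t_{n-1})f(t_1,\dots,t_n)=-S(t_{n+1}-t_{n-1})f(t_1,\dots,t_n)$ by the semigroup property. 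Pulling $S(t_{n+1}-t_n)$ out of the entire expression and rewriting $S(t_{n+1}-t_{n-1})=S(t_{n+1}-t_n)S(t_n-t_{n-1})$ inside, what remains inside the outer $S(t_{n+1}-t_n)$ is exactly $\sum_{i=1}^{n-1}(-1)^{n-i}f((t_1,\dots,t_{n+1})^{\wedge i})+S(t_n-t_{n-1})[f(t_1,\dots,t_{n-1},t_{n+1})-f(t_1,\dots,t_n)]$, which is the definition of $(\delta_{S,n}f)(t_1,\dots,t_{n+1})$. Since the outer factor $S(t_{n+1}-t_n)$ is precisely the action of $\mathbb{S}_{n+1}$ on an $(n+1)$-variable function, this yields the right-hand side of \eqref{eq:commutaz}.

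The main obstacle, such as it is, is the bookkeeping: one must carefully distinguish which deletions in the alternating sum preserve the pair $(t_n,t_{n+1})$ in the last two slots of the tuple and which do not, because these two kinds of terms interact differently with $\mathbb{S}_n$. Once this case split is made, the semigroup property collapses the remaining expressions and the matching with $\mathbb{S}_{n+1}\delta_{S,n}f$ becomes transparent.
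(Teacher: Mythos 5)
Your proof is correct and follows essentially the same route as the paper's: expand $(\hat\delta_n\mathbb{S}_nf)(t_1,\ldots,t_{n+1})$, split the deleted indices according to whether the last two slots of the reduced tuple remain $(t_n,t_{n+1})$, and use the semigroup law to factor out $S(t_{n+1}-t_n)$ and recognize $(\delta_{S,n}f)$ inside. The only (harmless) difference is that you treat $n=1$ as an explicit base case, which the paper's generic computation covers implicitly via the convention built into $\mathbb{S}_1$.
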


\begin{proof}
Fix $f:[a,b]^n_<\rightarrow X$. From the definition 
of the operators $\hat\delta_n$ and $\delta_{S,n}$ it follows that
\begin{align*}
&(\hat\delta_n\mathbb S_nf)(t_1,\ldots,t_{n+1})\\
= & \sum_{i=1}^{n+1}(-1)^{n-i}(\mathbb S_nf)((t_1,\ldots,t_{n+1})^{\wedge i})-\mathfrak a(t_{n},t_{n+1})(\mathbb S_nf)(t_1,\ldots,t_n) \\
= & (-1)^{n-1}S(t_{n+1}-t_{n})f(t_2,\ldots,t_{n+1})
+(-1)^{n-2} S(t_{n+1}-t_{n})f(t_1,t_3\ldots,t_{n+1}) \\
& +\ldots+S(t_{n+1}-t_{n-1})f(t_1,\ldots,t_{n-1},t_{n+1}) -S(t_{n+1}-t_{n-1})f(t_1,\ldots,t_n) \\
= & S(t_{n+1}-t_{n})\bigg(\sum_{i=1}^{n-1}(-1)^{n-i}f((t_1,\ldots,t_{n+1})^{\wedge i}) \\
&\qquad\qquad\qquad\;\, +S(t_n-t_{n-1})[f(t_1,\ldots,t_{n-1},t_{n+1}) -f(t_1,\ldots,t_{n})]\bigg )
\end{align*}
for every $(t_1,\ldots,t_{n+1})\in [a,b]_{<}^{n+1}$ and the term in brackets is $(\delta_{S,n}f)(t_1,\ldots,t_{n+1})$.
\end{proof}

The following result  is the analogous of \cite[Proposition 3.1]{GT10}, which deals with the increment $\hat\delta_n$.

\begin{lemma}
\label{lemma:ker=im}
${\rm Im}(\delta_{S,n})={\rm Ker}(\delta_{S,n+1})$ for every $n\in\N$.
\end{lemma}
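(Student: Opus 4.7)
The plan is to transfer the analogous exactness property $\mathrm{Im}(\hat\delta_n)=\mathrm{Ker}(\hat\delta_{n+1})$ of \cite[Proposition 3.1]{GT10} to the new increment $\delta_{S,n}$, exploiting the commutation relation
\begin{align*}
\hat\delta_n \mathbb S_n = \mathbb S_{n+1}\delta_{S,n}
\end{align*}
of Lemma \ref{lemma:commutz}, together with the injectivity of $S(t)$ granted by Hypothesis \ref{hyp-main}(iv). The two inclusions must be handled separately: because $\mathbb S_n$ is in general not invertible, the inclusion $\mathrm{Im}(\delta_{S,n})\subseteq\mathrm{Ker}(\delta_{S,n+1})$ follows from a purely formal use of the commutation plus injectivity, whereas for the opposite inclusion an explicit antiderivative has to be written down by hand.

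For $\mathrm{Im}(\delta_{S,n})\subseteq\mathrm{Ker}(\delta_{S,n+1})$ I would fix $f:[a,b]^n_<\to X$, set $g:=\delta_{S,n}f$, and apply Lemma \ref{lemma:commutz} twice to obtain
\begin{align*}
\mathbb S_{n+2}(\delta_{S,n+1}g)=\hat\delta_{n+1}\mathbb S_{n+1}g=\hat\delta_{n+1}\hat\delta_n\mathbb S_n f=0,
\end{align*}
the last equality being $\hat\delta_{n+1}\hat\delta_n=0$, a by-product of \cite[Proposition 3.1]{GT10}. Evaluated at $(t_1,\ldots,t_{n+2})\in[a,b]^{n+2}_<$ this reads $S(t_{n+2}-t_{n+1})(\delta_{S,n+1}g)(t_1,\ldots,t_{n+2})=0$, and the injectivity of $S(t_{n+2}-t_{n+1})$ (with $S(0)=\mathrm{Id}$ taking care of the boundary case $t_{n+1}=t_{n+2}$) yields $\delta_{S,n+1}g\equiv 0$.

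For the opposite inclusion $\mathrm{Ker}(\delta_{S,n+1})\subseteq\mathrm{Im}(\delta_{S,n})$, given $g:[a,b]^{n+1}_<\to X$ with $\delta_{S,n+1}g\equiv 0$, I would propose the explicit antiderivative
\begin{align*}
f(t_1,\ldots,t_n):=(-1)^{n-1}g(a,t_1,\ldots,t_n),\qquad (t_1,\ldots,t_n)\in[a,b]^n_<,
\end{align*}
and verify $\delta_{S,n}f=g$ by evaluating the identity $(\delta_{S,n+1}g)(a,t_1,\ldots,t_{n+1})=0$. The summand in which the first coordinate $a$ is erased produces $(-1)^n g(t_1,\ldots,t_{n+1})$, while the remaining summands, after shifting the index and multiplying by the prefactor $(-1)^{n-1}$, reproduce exactly the right-hand side of the definition of $(\delta_{S,n}f)(t_1,\ldots,t_{n+1})$; rearranging gives the claim. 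The same formula covers $n=1$, since in that case the unique ``semigroup'' term in $(\delta_{S,2}g)(a,t_1,t_2)$ involves $S(t_1-a)$ and matches the special form of $\delta_{S,1}f$.

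The main obstacle is structural rather than computational: as $(S(t))_{t\geq 0}$ is not invertible, a preimage for $\mathbb S_{n+1}g$ under $\hat\delta_n$ cannot in general be factored through $\mathbb S_n$, so the reverse inclusion resists any attempt to algebraically ``invert'' Lemma \ref{lemma:commutz}. Once this is recognised, the explicit formula circumvents the difficulty and what remains is careful bookkeeping of signs and shifted indices.
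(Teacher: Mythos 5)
Your proof is correct and follows essentially the same route as the paper's: the forward inclusion via the commutation identity of Lemma \ref{lemma:commutz} combined with $\hat\delta_{n+1}\hat\delta_n=0$ from \cite[Proposition 3.1]{GT10} and the injectivity of $S(t)$, and the reverse inclusion via the explicit antiderivative built from $g(a,\cdot)$ (your sign $(-1)^{n-1}$ coincides with the paper's $(-1)^{n+1}$). The sign and index bookkeeping, including the special $n=1$ case, checks out.
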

\begin{proof}
Let us begin by proving the inclusion ${\rm Im}(\delta_{S,n})\subseteq {\rm Ker}(\delta_{S,n+1})$. For this purpose, we fix a function $g\in {\rm Im}(\delta_{S,n})$ and let $f:[a,b]_{<}^n\to X$ be such that $g=\delta_{S,n}f$. From Lemma \ref{lemma:commutz} it follows that $\hat\delta_n\mathbb S_nf=\mathbb S_{n+1}g$. Clearly, the function $\hat\delta_n \mathbb S_nf$ belongs to the range of the operator $\hat\delta_n$ which, by  \cite[Proposition 3.1]{GT10}, coincides with the kernel of the operator $\hat\delta_{n+1}$. Therefore, by applying \eqref{eq:commutaz} with $n$ replaced by $n+1$ we infer that
\begin{align*}
0
=& \hat\delta_{n+1}\hat\delta_n\mathbb S_nf
= \hat\delta_{n+1}\mathbb S_{n+1}g
= \mathbb S_{n+2}\delta_{S,n+1}g.
\end{align*}
Since each operator $S(t)$ is one to one, we conclude that 
$\delta_{S,n+1}g=0$, so that $g$ belongs to the kernel of the operator $\delta_{S,n+1}$.

To prove the inclusion ``$\supset$'', we fix a function $f:[a,b]_<^{n+1}\to X$ such that $\delta_{S,n+1}f=0$ and consider the function $g:[a,b]_{<}^n\to X$, defined by  $g(t_1,\ldots,t_{n})=f(a,t_1\ldots,t_n)$ for every $(t_1,\ldots,t_n)\in [a,b]_{<}^n$. Note that
\begin{align*}
(\delta_{S,n}g)(t_1,\ldots,t_{n+1})
= & \sum_{i=1}^{n-1}(-1)^{n-i}f(a,(t_1,\ldots,t_{n+1})^{\wedge i}) \\
& +S(t_{n}-t_{n-1})[f(a,t_1,\ldots,t_{n-1},t_{n+1})-f(a,t_1,\ldots,t_{n})]\\
= & \sum_{i=1}^{n-1}(-1)^{n-i}f((a,t_1,\ldots,t_{n+1})^{\wedge i+1}) \\
& +S(t_{n}-t_{n-1})[f(a,t_1,\ldots,t_{n-1},t_{n+1})-f(a,t_1,\ldots,t_{n})]\\
= &\sum_{i=2}^n(-1)^{n+1-i}f((a,t_1,\ldots,t_{n+1})^{\wedge i}) \\
& +S(t_{n}-t_{n-1})[f(a,t_1,\ldots,t_{n-1},t_{n+1})-f(a,t_1,\ldots,t_{n})]\\
= & (\delta_{S,n+1}f)(a,t_1,\ldots,t_{n+1})-(-1)^nf(t_1,\ldots,t_{n+1}) \\
= & (-1)^{n+1}f(t_1,\ldots,t_{n+1})
\end{align*}
for every $(t_1,\ldots,t_{n+1})\in [a,b]_{<}^{n+1}$.
From this chain of equalities it follows that
$f$ is the image under the operator $\delta_{S,n}$ of the function $(-1)^{n+1}g$. The proof is complete. 
\end{proof}

\subsection{The sewing map and the convolution integral for the new increment}
\label{subsect-2.2}
The aim of this subsection is to prove the existence of a sewing map $M$ for functions in ${\rm Ker}(\delta_{S,3})$. As a byproduct, we are able to define the convolution integral for a wide class of functions.
\begin{proposition}
\label{prop:new_sew_map}
Fix $\mu>1$, $\alpha\in [0,2)$ and $f\in  \mathscr{C}^{\mu}([a,b]^3_<;X_\alpha)\cap {\rm Ker}(\delta_{S,3})$. Then, there exists a unique function $M\in\displaystyle\bigcap_{0\le\varepsilon<1} \mathscr{C}^{\mu-\varepsilon}([a,b]^2_<;X_{\alpha+\varepsilon})$ such that $\hat\delta_2 M=\mathbb S_3f$ on $[a,b]^3_{<}$. Moreover, for every $\varepsilon\in[0,1)$ there exists a positive constant $C=C(\varepsilon,\alpha,\mu,b-a)$ such that
\begin{align}
\|M\|_{\mathscr{C}^{\mu-\varepsilon}([a,b]^2_<;X_{\alpha+\varepsilon})}
\leq C\|f\|_{\mathscr{C}^{\mu}([a,b]^3_<;X_\alpha)}.
\label{croazia-canada}
\end{align}
\end{proposition}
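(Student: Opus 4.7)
The plan is to reduce existence to the convergence of a convolution-type Riemann sum: Lemma \ref{lemma:ker=im} realises $f$ as $\delta_{S,2}g$ for a concrete 2-variable $g$, and a dyadic refinement argument in the spirit of the Young/Gubinelli sewing lemma then controls the increments of the sum. The smoothing estimate \eqref{stime_smgr}(a) for $S$ is what delivers the gain of regularity $\varepsilon$ in the image space.

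Concretely, Lemma \ref{lemma:ker=im} produces a continuous $g:[a,b]^2_<\to X_\alpha$ with $\delta_{S,2}g=f$ (explicitly $g(s,t):=-f(a,s,t)$); although $g$ itself has no useful H\"older regularity, only $\delta_{S,2}g=f$ enters the estimates. For $s,t\in[a,b]$ with $s<t$ and a partition $\Pi=\{s=\tau_0<\cdots<\tau_n=t\}$, I set
\begin{equation*}
R_\Pi(s,t):=\sum_{i=1}^n S(t-\tau_{i-1})\,g(\tau_{i-1},\tau_i),
\end{equation*}
so that $R_{\{s,t\}}=\mathbb S_2 g$. Inserting a single point $\xi\in(\tau_{k-1},\tau_k)$ changes the sum through the clean formula $R_{\Pi\cup\{\xi\}}-R_\Pi=-S(t-\xi)\,f(\tau_{k-1},\xi,\tau_k)$, obtained by factoring $S(t-\xi)$ and recognising $-(\delta_{S,2}g)(\tau_{k-1},\xi,\tau_k)$. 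Iterating this identity along the dyadic partitions $\Pi_m$ of $[s,t]$ with $2^m$ equal subintervals of length $\Delta_m=(t-s)2^{-m}$ and midpoints $\xi_j^m$, Hypothesis \ref{hyp-main}(iii)(a) together with the H\"older bound on $f$ yields
\begin{equation*}
\|R_{\Pi_{m+1}}-R_{\Pi_m}\|_{X_{\alpha+\varepsilon}}
\le L_{\alpha,\alpha+\varepsilon}\,\|f\|_{\mathscr C^\mu}\,\Delta_m^\mu\sum_{j=1}^{2^m}(t-\xi_j^m)^{-\varepsilon}.
\end{equation*}
The $j$-sum is comparable (as a Riemann sum for $\int_0^1(1-x)^{-\varepsilon}dx=(1-\varepsilon)^{-1}$) to $(t-s)^{-\varepsilon}2^m/(1-\varepsilon)$, so the right-hand side is of order $\|f\|_{\mathscr C^\mu}(t-s)^{\mu-\varepsilon}\,2^{m(1-\mu)}$, which is summable in $m$ because $\mu>1$. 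Hence $(R_{\Pi_m})_m$ is Cauchy in $X_{\alpha+\varepsilon}$ and its limit $R(s,t)$ obeys $\|(\mathbb S_2 g)(s,t)-R(s,t)\|_{X_{\alpha+\varepsilon}}\le C\|f\|_{\mathscr C^\mu}(t-s)^{\mu-\varepsilon}$; a standard comparison with common refinements then extends this to $\lim_{|\Pi|\to 0}R_\Pi=R$.

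Setting $M:=\mathbb S_2 g-R$, the preceding bound is exactly \eqref{croazia-canada}. Splitting any partition of $[r,t]$ that contains $s$ into its traces on $[r,s]$ and $[s,t]$ and letting the mesh vanish gives $R(r,t)=R(s,t)+S(t-s)R(r,s)$, i.e.\ $\hat\delta_2 R=0$; combined with Lemma \ref{lemma:commutz}, this yields $\hat\delta_2 M=\hat\delta_2\mathbb S_2 g=\mathbb S_3\delta_{S,2}g=\mathbb S_3 f$. For uniqueness, any $N$ in the stated intersection with $\hat\delta_2 N=0$ satisfies $N(r,t)=\sum_i S(t-\tau_i)N(\tau_{i-1},\tau_i)$ for every partition of $[r,t]$; picking $\varepsilon\in[0,\mu-1)$ (non-empty because $\mu>1$), the H\"older control of $N$ in $X_{\alpha+\varepsilon}$ together with the uniform bound on $\|S(t-\tau_i)\|_{\mathscr L(X_{\alpha+\varepsilon})}$ forces the right-hand side to be at most $C(t-r)|\Pi|^{\mu-\varepsilon-1}\to 0$, so $N\equiv 0$. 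The main technical obstacle is the dyadic estimate above: one must retrieve the full exponent $\mu-\varepsilon$ in $X_{\alpha+\varepsilon}$ despite the singular weight $(t-\xi)^{-\varepsilon}$ coming from the smoothing of $S$, and this is precisely why the restriction $\varepsilon<1$ appears, through the integrability of $(1-x)^{-\varepsilon}$ on $[0,1]$.
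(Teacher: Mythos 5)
Your proposal is correct and follows essentially the same route as the paper: the same Riemann sums (your $\sum_i S(t-\tau_{i-1})g(\tau_{i-1},\tau_i)$ coincides with the paper's $\sum_i S(t-r_i)\psi(r_{i-1},r_i)$ for $\psi=\mathbb S_2 g$), the same dyadic telescoping with the smoothing bound \eqref{stime_smgr}(a) producing the $(t-\xi)^{-\varepsilon}$ weight and the exponent $\mu-\varepsilon$, the same Young-type common-refinement comparison to pass to arbitrary partitions, and the same additivity argument for $\hat\delta_2 M=\mathbb S_3 f$. The only cosmetic difference is uniqueness, which you prove directly by iterating $\hat\delta_2 N=0$ along partitions instead of invoking \cite[Propositions 3.1 and 3.4]{GT10} as the paper does.
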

\begin{proof}
{\it Uniqueness}. Fix $f\in \mathscr{C}^{\mu}([a,b]^3_<;X_\alpha)$ and let $M_1,M_2$ be two functions as in the statement.
Then, $\hat\delta_2 (M_1-M_2)=0$, and therefore, by  \cite[Proposition 3.1]{GT10}, it follows that there exists a function $g\in C([a,b];X_\alpha)$ such that $\hat\delta_1 g=M_1-M_2\in \mathscr{C}^{\mu}([a,b]^2_<;X_\alpha)$. From \cite[Proposition 3.4]{GT10} we conclude that $g\equiv 0$, i.e., $M_1=M_2$. 

{\it Existence}. 
Since $f\in {\rm Ker}(\delta_{S,3})$, from Lemma \ref{lemma:ker=im} it follows that there exists a function $g:[a,b]^2_<\rightarrow X$ such that $f=\delta_{S,2}g$. Lemma \ref{lemma:commutz} shows that $\mathbb S_3f=\mathbb S_3\delta_{S,2}g=\hat\delta_2\mathbb S_2g$. If we set $\psi:=\mathbb S_2g$, then we get $\hat\delta_2\psi=\mathbb S_3f$.

Since the rest of the proof is rather long, we split it into several steps. In Step 1, we construct the function $M$. Unfortunately, from the way the function $M$ is defined, is not easy to prove that $\hat\delta_2M=\mathbb S_3f$. Hence, in Steps 2 to 4, we prove that, for every $s,t\in [a,b]$, with $s<t$, it holds that
\begin{align}
\label{limite_sew_map}
M(s,t)=\psi(s,t)-\lim_{|\Pi(s,t)|\to 0}\sum_{i=1}^nS(t-t_i)\psi(t_{i-1},t_i),
\end{align}
in $X_\alpha$, where $\Pi(s,t):=\{s=t_0<t_1<\ldots<t_n=t\}$ is a partition of $[s,t]$. Using this formula, in Step 5 we complete the proof.

{\em Step 1}. Let us fix $(s,t)\in[a,b]^2_<$, with $s<t$, and for every $n\in\N\cup\{0\}$ let us consider the partition
\begin{align}
\label{def_part_diadica}
\Pi_n(s,t):=\{s=r_0^n<\ldots<r_{2^n}^n=t\}, \qquad
    r_i^n:=s+i\frac{t-s}{2^n}, \quad i=0,\ldots,2^n,   
\end{align}
of $[s,t]$.
For every $n\in\N\cup\{0\}$, we set
\begin{align}
\label{def_funz_M_n}
M_n(s,t):=\psi(s,t)-\sum_{i=1}^{2^n}S(t-r^n_i)\psi(r_{i-1}^n,r_i^n). 
\end{align}
Moreover, $M_n$ is a continuous function on $[a,b]^2_{<}\setminus\{(s,s): s\in [a,b]\}$.
Note that $r_{0}^0=s$, $r_{1}^0=t$ and, therefore, $M_0(s,t)=0$. 

By straightforward computations, we get
\begin{align*}
M_n(s,t)=\psi(s,t)-\sum_{i=1}^{2^{n}}S(t-r^{n+1}_{2i})\psi(r^{n+1}_{2i-2},r^{n+1}_{2i})
\end{align*}
and
\begin{align*}
M_{n+1}(s,t)=&\psi(s,t)-\sum_{i=1}^{2^n}S(t-r^{n+1}_{2i})\psi(r^{n+1}_{2i-1},r^{n+1}_{2i})\\
&-\sum_{i=1}^{2^n}S(t-r^{n+1}_{2i-1})\psi(r^{n+1}_{2i-2},r^{n+1}_{2i-1})\\
=&\psi(s,t)-\sum_{i=1}^{2^n}S(t-r^{n+1}_{2i})\psi(r^{n+1}_{2i-1},r^{n+1}_{2i})\\
&-\sum_{i=1}^{2^n}S(t-r^{n+1}_{2i})S(r^{n+1}_{2i}-r^{n+1}_{2i-1})\psi(r^{n+1}_{2i-2},r^{n+1}_{2i-1}),
\end{align*}
so that, recalling that $({\mathbb S}_3f)(r,s,t)=S(t-s)f(r,s,t)$, we deduce that
\begin{align*}
M_{n+1}(s,t)-M_n(s,t)= & \sum_{i=1}^{2^n}S(t-r_{2i}^{n+1})(\hat\delta_2\psi)(r_{2i-2}^{n+1}, r^{n+1}_{2i-1},r^{n+1}_{2i}) \\
= & \sum_{i=1}^{2^n}S(t-r_{2i}^{n+1})({\mathbb S}_3f)(r_{2i-2}^{n+1}, r^{n+1}_{2i-1},r^{n+1}_{2i}) \\
= & \sum_{i=1}^{2^n}S(t-r_{2i-1}^{n+1})f(r_{2i-2}^{n+1},r_{2i-1}^{n+1}, r_{2i}^{n+1}).
\end{align*}
Let us stress that, differently from the equality obtained in \cite[Theorem 3.5]{GT10}, the argument of $S(\cdot)$ is always greater than or equal to $2^{-n-1}(t-s)$, so that it never reaches $0$.
This allows to exploit the smoothing properties of the semigroup in order to get better regularity in space. Since $M_0(s,t)=0$, it follows that 
\begin{align}
\label{serie_sew_map}
M_n(s,t)=\sum_{k=0}^{n-1}[M_{k+1}(s,t)-M_k(s,t)].
\end{align}
Each term in \eqref{serie_sew_map} belongs to $X_{\alpha+\varepsilon}$ for every $\varepsilon>0$, by Hypothesis \ref{hyp-main}(iii)-(a). Moreover, for every $\varepsilon\in[0,1)$ and $k\in\N\cup\{0\}$ we can estimate
\begin{align*}
&\|M_{k+1}(s,t)-M_k(s,t)\|_{X_{\alpha+\varepsilon}}\\
\leq & L_{\alpha,\alpha+\varepsilon}\|f\|_{\mathscr{C}^\mu([a,b]_<^3;X_\alpha)}|t-s|^\mu 2^{-k\mu}\sum_{i=1}^{2^k}|t-r^{k+1}_{2i-1}|^{-\varepsilon} \\
= & L_{\alpha,\alpha+\varepsilon}\|f\|_{\mathscr{C}^{\mu}([a,b]_<^3;X_\alpha)}|t-s|^\mu 2^{-k\mu}\sum_{i=1}^{2^k}\frac{1}{r^{k+1}_{2i}-r^{k+1}_{2i-1}}\int_{r^{k+1}_{2i-1}}^{r^{k+1}_{2i}}|t-r^{k+1}_{2i-1}|^{-\varepsilon}d\xi \\
\leq & 2L_{\alpha,\alpha+\varepsilon}\|f\|_{\mathscr{C}^{\mu}([a,b]_<^3;X_\alpha)}|t-s|^{\mu-1} 2^{k(1-\mu)}\int_s^t(t-\xi)^{-\varepsilon}d\xi \\
= & 2^{1+k(1-\mu)}(1-\varepsilon)^{-1}L_{\alpha,\alpha+\varepsilon}\|f\|_{\mathscr{C}^{\mu}([a,b]_<^3;X_\alpha)}|t-s|^{\mu-\varepsilon},
\end{align*}
where $L_{\alpha,\alpha+\varepsilon}=L_{\alpha,\alpha+\varepsilon,b-a}$ and we have used the fact that the function $\xi\mapsto (t-\xi)^{-\varepsilon}$ is increasing in $(s,t)$. It follows that the series defined in \eqref{serie_sew_map} converges in $X_{\alpha+\varepsilon}$ as $n$ tends to $\infty$. 

Denote by $M(s,t)$ the limit in $X_{\alpha+\varepsilon}$ of the sequence $\{M_n\}$. Since this 
sequence uniformly converges in $[a,b]^2_{<}$, the function $M$ is continuous in $[a,b]^2_{<}\setminus\{(s,s): s\in [a,b]\}$. Using the above computations,  we can easily extend $M$ on the diagonal of  $[a,b]^2_{<}$, setting $M(s,s)=0$ for every $s\in [a,b]$. The so obtained function, which we still denote by $M$, belongs to $\mathscr{C}^{\mu-\varepsilon}([a,b]^2_{<};X_{\alpha+\varepsilon})$ for every $\varepsilon\in [0,1)$ and formula \eqref{croazia-canada} holds true.

{\it Step 2}. This is the crucial step to prove formula \eqref{limite_sew_map}.
Let us fix $s',t'\in [a,b]$ with $s'<t'$, and let us consider a partition $\Pi^{(n)}(s',t')=\{s'=r_0^n <r^n_1<\ldots<r^n_n=t'\}$ of $[s',t']$, with more than two points. We set
\begin{align}
\widehat M_n(s',t')= \psi(s',t')-\sum_{i=1}^nS(t'-r^n_i)\psi(r^n_{i-1},r^n_{i}).
\label{def-M}
\end{align}
and notice that there exists $i\in\{1,\ldots,n-1\}$ such that $r_{i+1}^n-r_{i-1}^n\leq \frac{2(t'-s')}{n-1}$. Indeed, suppose by contradiction that $r_{i+1}^n-r_{i-1}^n> \frac{2(t'-s')}{n-1}$ for every $i=1,\ldots,n-1$. Then, from the formula
\begin{align*}
t'-s'=\sum_{i=1}^{n-1}(r^n_{i+1}-r^n_{i-1})-r_{n-1}^n+r_1^n,    
\end{align*}
we get the contradiction $t'-s'>2(t'-s')-(r_{n-1}^n-r_1^n)>t'-s'$. 

Let us denote by $\overline i_n$ an element of $\{1,\ldots,n-1\}$ such that $r^n_{i+1}-r^n_{i-1}\leq \frac{2(t'-s')}{n-1}$ and consider the partition 
$\Pi^{(n-1)}(s',t')=\{s'=r^{n-1}_0<r^{n-1}_1<\ldots<r^{n-1}_{n-1}=t'\}\subset\Pi^{(n)}(s',t')$ of $[s',t']$, where
$r_i^{n-1}=r_i^{n}$ for every $i\in\{0,\ldots,\overline i_n-1\}$, and $r_{i-1}^{n-1}=r_{i}^{n}$ for every $i\in \{\overline i_n+1,\ldots,n\}$. 
Accordingly to \eqref{def-M}, we define 
\begin{align*}
\widehat M_{n-1}(s',t')=\psi(s',t')-\sum_{i=1}^{n-1}S(t'-r^{n-1}_i)\psi(r^{n-1}_{i-1},r^{n-1}_{i}).     
\end{align*}
Arguing as above, we infer that there exists $\overline i_{n-1}\in\{1,\ldots,n-2\}$ such that
$r^{n-1}_{\overline i_{n-1}+1}-r^{n-1}_{\overline i_{n-1}-1}\leq \frac{2(t'-s')}{n-2}$. We set $r_i^{n-2}:=r_i^{n-1}$ for every $i\in\{0,\ldots,\overline i_{n-1}-1\}$ and $r_{i-1}^{n-2}:=r_{i}^{n-1}$ for every $i\in\{\overline i_{n-1}+1,\ldots,n-1\}$. Iterating this procedure, for every $k\in\{1,\ldots,n-1\}$ we define a partition $\Pi^{(k)}(s',t')=\{s'=r_0^k<r_1^k<\ldots<r_k^k=t'\}\subset \Pi^{(k+1)}(s',t')$ of $[s',t']$ and a function 
\begin{align*}
\widehat M_k(s',t')=\psi(s',t')-\sum_{i=1}^kS(t'-r^k_i)\psi(r_{i-1}^k,r_i^k).
\end{align*}
Fix $k=1,\ldots,n-1$ and denote by $\overline{i}_{k+1}$ the index such that $\Pi^{(k)}(s',t')=\{
s'=r^{k+1}_0<\ldots<r^{k+1}_{\overline{i}_{k+1}-1}<r^{k+1}_{\overline{i}_{k+1}+1}<\ldots<r^{k+1}_{k+1}\}$. Note that
\begin{align*}
\widehat M_{k+1}(s',t')=&\psi(s',t')-\sum_{i=1}^{k+1}S(t'-r^{k+1}_i)\psi(r^{k+1}_{i-1},r^{k+1}_i)\\
=&\psi(s',t')-\sum_{i=1}^{\overline{i}_{k+1}-1}S(t'-r^{k+1}_i)\psi(r^{k+1}_{i-1},r^{k+1}_i)\\
&-
S(t'-r^{k+1}_{\overline{i}_{k+1}})\psi(r^{k+1}_{\overline{i}_{k+1}-1},r^{k+1}_{\overline{i}_{k+1}})-S(t'-r^{k+1}_{\overline{i}_{k+1}+1})\psi(r^{k+1}_{\overline{i}_{k+1}},r^{k+1}_{\overline{i}_{k+1}+1})\\
&-\sum_{\overline{i}_{k+1}+2}^{k+1}S(t'-r^{k+1}_i)\psi(r^{k+1}_{i-1},r^{k+1}_i)\\
=&\psi(s',t')-\sum_{i=1}^{\overline{i}_{k+1}-1}S(t'-r^k_i)\psi(r^k_{i-1},r^k_i)-\sum_{\overline{i}_{k+1}+1}^kS(t'-r^k_i)\psi(r^k_{i-1},r^k_i)\\
&-S(t'-r^{k+1}_{\overline{i}_{k+1}})\psi(r^{k+1}_{\overline{i}_{k+1}-1},r^{k+1}_{\overline{i}_{k+1}})-S(t'-r^{k+1}_{\overline{i}_{k+1}+1})\psi(r^{k+1}_{\overline{i}_{k+1}},r^{k+1}_{\overline{i}_{k+1}+1})
\end{align*}
so that
\begin{align*}
\widehat M_{k+1}(s',t')-\widehat M_k(s',t')    = 
& S(t'-r^{k+1}_{\overline i_{k+1}+1})\psi(r^{k+1}_{\overline i_{k+1}-1},r^{k+1}_{\overline i_{k+1}+1}) \\
& - S(t'-r^{k+1}_{\overline i_{k+1}+1})\psi(r^{k+1}_{\overline i_{k+1}},r^{k+1}_{\overline i_{k+1}+1}) \\
& -S(t'-r^{k+1}_{\overline i_{k+1}})\psi(r^{k+1}_{\overline i_{k+1}-1},r^{k+1}_{\overline i_{k+1}}) \\
= & S(t'-r^{k+1}_{\overline i_{k+1}+1})(\hat\delta_2\psi)(r^{k+1}_{\overline i_{k+1}-1},r^{k+1}_{\overline i_{k+1}},r^{k+1}_{\overline i_{k+1}+1}) \\
= & S(t'-r^{k+1}_{\overline i_{k+1}})f(r^{k+1}_{\overline i_{k+1}-1},r^{k+1}_{\overline i_{k+1}},r^{k+1}_{\overline i_{k+1}+1})
\end{align*}
for every $k=1,\ldots,n-1$.
Assumption \eqref{stime_smgr}(a) and the hypothesis on $f$ imply that
\begin{align}
\|\widehat M_{k+1}(s',t')-\widehat M_{k}(s',t')\|_{X_\alpha} \notag \leq & L_{\alpha,\alpha}\|f\|_{\mathscr{C}^{\mu}([a,b]_<^3;X_\alpha)}|r^{k+1}_{\overline i_{k+1}+1}-r^{k+1}_{\overline i_{k+1}-1}|^\mu \\
\leq & 2^{\mu}L_{\alpha,\alpha}(t'-s')^{\mu} \|f\|_{\mathscr{C}^\mu([a,b]_<^3;X_\alpha)}k^{-\mu}
\label{stima_widetilde_Mk}
\end{align}
for every $k=1,\ldots,n-1$. 
From $\widehat M_1(s',t')=0$ it follows that
\begin{align*}
\widehat M_n(s',t')
= \sum_{k=1}^{n-1} \widehat [M_{k+1}(s',t')-\widehat M_k(s',t')],
\end{align*}
and from \eqref{stima_widetilde_Mk} we infer that
\begin{align}
\|\widehat M_n\|_{X_\alpha}
\le 2^{\mu}(t's')^{\mu}L_{\alpha,\alpha}\|f\|_{\mathscr{C}^\mu([a,b]_<^3;X_\alpha)}\sum_{k=1}^{n-1}k^{-\mu}\le C_{\mu}\|f\|_{\mathscr{C}^\mu([a,b]_<^3;X_\alpha)}(t'-s')^\mu,
\label{stima_serie_s't'}
\end{align}
where $C_\mu:=2^{\mu}L_{\alpha,\alpha}\sum_{k=1}^\infty k^{-\mu}$. 

{\it Step 3}. Let us fix $s,t\in[a,b]$, with $s<t$, and let $\Pi_1(s,t)=\{s=u_0<u_1<\ldots<u_m=t\}$ and $\Pi_2(s,t)=\{s=w_0<w_1<\ldots<w_h=t\}$ be two partitions of $[s,t]$ with $\Pi_1(s,t)\subset \Pi_2(s,t)$. For every $i=1,\ldots,m$, let us denote by $s^i_j$, $j=0,\ldots, j_i$, the elements of $ \Pi_2(s,t)$ which satisfy $u_{i-1}=s_0^i<s^i_1<\ldots<s^i_{j_i}=u_{i}$. If we set
\begin{align*}
M_{\Pi_1}(s,t):= & \psi(s,t)-\sum_{i=1}^m S(t-u_i)\psi(u_{i-1},u_i), \\
M_{\Pi_2}(s,t)
:= & \psi(s,t)-\sum_{j=1}^h S(t-w_j)\psi(w_{j-1},w_j) \\
= & \psi(s,t)-\sum_{i=1}^m\sum_{j=0}^{j_i}S(t-s^i_{j})\psi(s^i_{j-1},s^i_{j}),
\end{align*}
then we get
\begin{align*}
M_{\Pi_1}(s,t)-M_{\Pi_2}(s,t)
= & \sum_{i=1}^m\bigg(S(t-u_i)\psi(u_{i-1},u_i)-\sum_{j=0}^{j_i}S(t-s^i_{j})\psi(s^i_{j-1},s^i_j) \bigg ) \\
= & \sum_{i=1}^mS(t-u_i)\bigg (\psi(u_{i-1},u_i)-\sum_{j=0}^{j_i}S(u_i-s^i_{j})\psi(s^i_{j-1},s^i_j) \bigg ).
\end{align*}
Thanks to \eqref{stima_serie_s't'} we can estimate every term of the above sum by setting $s'=u_{i-1}$ and $t'=u_i$ for every $i=1,\ldots,m$. It follows that
\begin{align}
\notag
\|M_{\Pi_1}(s,t)- M_{\Pi_2}(s,t)\|_{X_\alpha}
\leq & L_{\alpha,\alpha}C_\mu\|f\|_{\mathscr{C}^\mu([a,b]_<^3;X_\alpha)}\sum_{i=1}^m(u_{i}-u_{i-1})^\mu \\
\leq & L_{\alpha,\alpha}C_\mu\|f\|_{\mathscr{C}^\mu([a,b]_<^3;X_\alpha)}|t-s|\max_{i=1,\ldots,m}(u_{i}-u_{i-1})^{\mu-1}.
\label{estimate_step_2}
\end{align} 

{\it Step 4}. Now, we are ready to prove \eqref{limite_sew_map}. We fix $s,t\in [a,b]$, with $s<t$, and $\varepsilon>0$, and we prove that, if we choose $\delta:=[(4L_{\alpha,\alpha}C_\mu\|f\|_{{\mathscr C}^{\mu}([a,b]^3_{<};X_{\alpha})}(t-s))^{-1}\varepsilon]\wedge 1$, then for every partition $\Pi(s,t)=\{s=t_0<t_1<\ldots<t_n=t\}$ of $[s,t]$, with $|\Pi(s,t)|\leq \delta$, we get
\begin{align}
\label{ineq_conv_sew_map}
\bigg\|M(s,t)-\psi(s,t)+\sum_{i=1}^nS(t-t_i)\psi(t_{i-1},t_i)\bigg\|_{X_\alpha}\leq \varepsilon.
\end{align}
This will yield \eqref{limite_sew_map}.

We fix a partition $\Pi(s,t)$ of $[s,t]$ as above and set 
\begin{align*}
M_{\Pi}(s,t)=\psi(s,t)-\sum_{i=1}^nS(t-t_i)\psi(t_{i-1},t_i).
\end{align*}
We recall that there exists $\overline m\in\N$ such that, for every $m\geq \overline m$, it holds that $\|M(s,t)- M_m(s,t)\|_{X_\alpha}\leq \varepsilon/2$, where $M_m(s,t)$ has been defined in \eqref{def_funz_M_n} for every $m\in\N\cup\{0\}$. Without loss of generality, we may assume $\overline m\geq \log_2(\delta^{-1}(t-s))$, which implies that $\frac{t-s}{2^{\overline m}}\leq \delta$. Therefore,
\begin{align}
\notag
\|M(s,t)-M_{\Pi}(s,t)\|_{X_\alpha}
\leq & \|M(s,t)-M_{\overline m}(s,t)\|_{X_\alpha}+\|M_{\overline m}(s,t)- M_{\Pi}(s,t)\|_{X_\alpha} \\
\leq & \frac{\varepsilon}{2}+\|M_{\overline m}(s,t)- M_{\Pi}(s,t)\|_{X_\alpha}.
\label{estimate_somma_step3}
\end{align}
We set $\widehat \Pi(s,t):=\Pi_{\overline m}(s,t)\cup\Pi(s,t)=:\{s_0=s<s_1<\ldots<s_h=t\}$ for some $h\in\{\max\{n,2^{{\overline m}}\},\ldots, n+2^{{\overline m}}-1\}$, where $\Pi_m(s,t)$ has been introduced in \eqref{def_part_diadica} for every $m\in\N\cup\{0\}$. We also set
\begin{align*}
M_{\widehat \Pi}(s,t):=\psi(s,t)-\sum_{i=1}^hS(t-s_i)\psi(s_{i-1},s_i).   
\end{align*}
Since both $\Pi(s,t)$ and $\Pi_{\overline m}(s,t)$ are contained in $\widehat \Pi(s,t)$, from estimate \eqref{estimate_step_2} we infer that
\begin{align*}
\|M_{\overline m}(s,t)-M_{\Pi}(s,t)\|_{X_\alpha}
\leq & \|M_{\overline m}(s,t)- M_{\widehat\Pi}(s,t)\|_{X_\alpha}+\|M_{\widehat\Pi}(s,t)-M_{\Pi}(s,t)\|_{X_\alpha} \\
\leq & L_{\alpha,\alpha}C_\mu
\|f\|_{{\mathscr C}^{\mu}([a,b]^3_{<};X_{\alpha})}(|\Pi_{\overline m}(s,t)|+|\Pi(s,t)|)(t-s) \\
\leq & 2L_{\alpha,\alpha}C_\mu\delta\|f\|_{{\mathscr C}^{\mu}([a,b]^3_{<};X_{\alpha})}(t-s)\leq \frac\varepsilon2,
\end{align*}
which gives \eqref{ineq_conv_sew_map} combined  with \eqref{estimate_somma_step3}.

{\em Step 5.} Now, we complete the proof, using formula \eqref{limite_sew_map} to show that
\begin{eqnarray*}
(\hat\delta_2 M)(r,s,t)
=(\hat\delta_2\psi)(r,s,t)=(\mathbb{S}_3f)(r,s,t),\qquad\;\, (r,s,t)\in[a,b]^3_<.
\end{eqnarray*}

We first observe that
\begin{align*}
(\hat\delta_2 M)(r,r,t)=M(r,t)-M(r,t)-S(t-r)M(r,r)=0
\end{align*}
and $(\mathbb S_3f)(r,r,t)=S(t-r)f(r,r,t)=0$ since, by definition of $\mathscr{C}^{\mu}([a,b]^3_{<};X_{\alpha})$, $f$ vanishes at the points of $[a,b]^3_{<}$ with at least two components which coincide.
Hence, $(\hat\delta_2 M)(r,r,t)
=(\mathbb S_3f)(r,r,t)$. In the same way, we can show that $(\hat\delta_2 M)(r,s,s)
=(\mathbb S_3f)(r,s,s)$ for every $a\le r<s\le b$. 

Let us consider the case when $a\le r<s<t\le b$. For this purpose, we
use \eqref{limite_sew_map} to show that, for every $a\le r<s<t\le b$, it holds that 
\begin{align*}
(\hat\delta_2M)(r,s,t)
= & M(r,t)-M(s,t)-S(t-s)M(r,s) \\
= & \psi(r,t)-\lim_{|\Pi(r,t)|\rightarrow0}\sum_{i=1}^nS(t-t_i)\psi(t_{i-1},t_i) \\
& -\psi(s,t)+\lim_{|\Pi(s,t)|\rightarrow0}\sum_{i=1}^nS(t-t_i)\psi(t_{i-1},t_i) \\
&-S(t-s)\psi(r,s)+S(t-s)\lim_{|\Pi(r,s)|\rightarrow0}\sum_{i=1}^nS(s-t_i)\psi(t_{i-1},t_i) \\
= & (\hat\delta_2\psi)(r,s,t)-\lim_{|\Pi(r,t)|\rightarrow0}\sum_{i=1}^nS(t-t_i)\psi(t_{i-1},t_i) \\
& +\lim_{|\Pi(s,t)|\rightarrow0}\sum_{i=1}^nS(t-t_i)\psi(t_{i-1},t_i) +\lim_{|\Pi(r,s)|\rightarrow0}\sum_{i=1}^nS(t-t_i)\psi(t_{i-1},t_i).
\end{align*}
Since
\begin{align*}
& \lim_{|\Pi(r,t)|\rightarrow0}\sum_{i=1}^nS(t-t_i)\psi(t_{i-1},t_i) \\
= & \lim_{|\Pi(s,t)|\rightarrow0}\sum_{i=1}^nS(t-t_i)\psi(t_{i-1},t_i) +\lim_{|\Pi(r,s)|\rightarrow0}\sum_{i=1}^nS(t-t_i)\psi(t_{i-1},t_i),
\end{align*}
the assertion follows at once.
\end{proof}
Let us provide an example of a function $f$ which satisfies the assumptions of Proposition \ref{prop:new_sew_map}. The great relevance of this example will be made clear in the second part of Subsection \ref{subsect-2.3}.

\begin{example}
\label{ex:new_sew_map}
Fix two positive numbers $\rho$ and $\eta$ such $\eta+\rho>1$. Further, let $x\in C^\eta([a,b])$ and $\varphi:[a,b]\rightarrow X$ be such that $\hat\delta_{1}\varphi\in \mathscr C^{\rho}([a,b]^2_<;X_\alpha)$ for some $\alpha\in [0,2)$. Finally, let $g:[a,b]^2_{<}\to X_{\alpha}$ be the function defined by $g(s,t):=(x(t)-x(s))\varphi(s)$ for every $(s,t)\in[a,b]^2_<$. Note that 
\begin{align}
(\delta_{S,2}g)(r,s,t)
= & -g(s,t)+S(s-r)g(r,t)-S(s-r)g(r,s)\notag \\
= & (x(t)-x(s))(-\varphi(s)+S(s-r)\varphi(r))\notag \\
= & -(x(t)-x(s))(\hat\delta_1\varphi)(r,s)
\label{pixel}
\end{align}
for every $(r,s,t)\in[a,b]^3_<$.
This means that the function $f=\delta_{S,2} g$ belongs to $\mathscr{C}^{\mu}([a,b]^3_<;X_\alpha)$ with $\mu=\eta+\rho$. Further, from Lemma $\ref{lemma:ker=im}$ we infer that $\delta_{S,3}f=0$. The assumptions of Proposition $\ref{prop:new_sew_map}$ are satisfied and, consequently, there exists a unique function $M\in\displaystyle\bigcap_{\varepsilon\in [0,1)}\mathscr{C}^{\mu-\varepsilon}([a,b]^2_<;X_{\alpha+\varepsilon})$ such that $\hat\delta_2M=\mathbb{S}_3f=\mathbb{S}_3\delta_{S,2}g=\hat\delta_2\mathbb{S}_2g$. Moreover, 
\begin{align*}
\|M\|_{\mathscr{C}^{\mu-\varepsilon}([a,b]^2_<;X_{\alpha+\varepsilon})}
\leq C\|x\|_{C^\eta([a,b]}\|\hat\delta_1\varphi\|_{\mathscr{C}^\rho([a,b]^2_<;X_\alpha)}
\end{align*}
for every $\varepsilon\in[0,1)$ and some positive constant $C=C(\varepsilon,\alpha,\mu)$.
\end{example}

\medskip

Let $g\in C([a,b]_<^2;X_\alpha)$ be a function such $\delta_{S,2}g\in \mathscr{C}^\mu([a,b]^3_<;X_\alpha)$ for some $\alpha\in [0,2)$ and $\eta>1$. Following \cite{GT10}, we introduce the function $k_g:[a,b]^2_{<}\to X$ defined by 
\begin{equation}
k_g(s,t)=S(t-s)g(s,t)-M(s,t),\qquad\;\, (s,t)\in [a,b]^2_{<},
\label{funct-k}
\end{equation}
where $M$ is the function defined in Proposition \ref{prop:new_sew_map}, associated to the function $f=\delta_{S,2} g$. Using the arguments in the last part of the proof of Proposition \ref{prop:new_sew_map}, it can be easily checked that the function ${\mathscr I}_g=k_g(a,\cdot)$ satisfies the condition $(\hat\delta_1{\mathscr I}_g)(s,t)=k_g(s,t)$
for every $(s,t)\in [a,b]^2_{<}$ and belongs to $C([a,b];X_{\alpha})$. Moreover, ${\mathscr I}_g$ vanishes at $t=a$ and this is the unique function with this property which belongs to $C([a,b];X)$. Indeed, suppose that ${\mathscr J}$ is another function in $C([a,b];X)$ which vanishes at $t=a$ and satisfies the condition $\hat\delta_1{\mathscr J}=k_g$. Then, the function $h={\mathscr I}_g-\mathscr{J}$ vanishes at $a$ and $\hat\delta_1h=0$ in $[a,b]$. In particular, $(\hat\delta_1h)(a,t)=0$ for every $t\in [a,b]$, which means that
$h(t)-S(t-a)h(a)=h(t)$ vanishes for every $t\in [a,b]$.

Inspired by \cite{GT10}, we provide the following definition.

\begin{definition}
\label{def-2.8}
Let $g\in C([a,b]_<^2;X_\alpha)$, for some $\alpha\in [0,2)$, be such that $\delta_{S,2}g\in \mathscr{C}^{\mu}([a,b]^3_<;X_\alpha)$ for some $\mu>1$. The function ${\mathscr I}_g=k_g(a,\cdot)$, where $k_g$ has been defined in \eqref{funct-k}, is called {\textbf{convolution integral of $\bm{g}$}}. 
\end{definition}

\subsection{Convolution integrals with singularities}
\label{subsect-2.3}
Since the map $t\mapsto \|S(t)\psi\|_{X_\alpha}$ has a singularity at $t=0$ of order $\alpha-\rho$ if $\psi\in X_\rho$, we need to extend the statement of Proposition \ref{prop:new_sew_map} when $f\in \mathscr{C}^{\mu}_{-\gamma}((a,b]^3_<;X_\alpha)$, i.e., when the function $f$ has a singularity of order $\gamma>0$ at $s=a$, and $\mu>1$. To begin with, we show that, thanks to Proposition \ref{prop:new_sew_map}, we can define a unique function $M$ on $(a,b]_<^2$ which enjoys nice properties.  

\begin{lemma}
\label{lem:function_M}
Fix  $\mu>1$, $\alpha\in [0,2)$ and let $g:(a,b]_<^2\rightarrow X$  be a function such that $\delta_{S,2}g$ belongs to $\mathscr{C}
^{\mu}_{-\gamma}((a,b]_<^3;X_{\alpha})$. Then, there exists a unique function $M$ which belongs to $\displaystyle \bigcap_{\varepsilon\in [0,1)}\mathscr{C}^{\mu-\varepsilon}([a+\theta,b]^2_<;X_{\alpha+\varepsilon})$ for every $\theta\in(0,b-a)$ and satisfies the condition $\hat\delta_2 M=\mathbb S_3\delta_{S,2}g$ in $(a,b]^3_<$.
\end{lemma}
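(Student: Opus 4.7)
The plan is to reduce to Proposition \ref{prop:new_sew_map} by working on intervals $[a+\theta,b]$ bounded away from the singularity at $a$, and then glue the resulting local sewing maps together using uniqueness.

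First I would fix $\theta\in(0,b-a)$ and restrict the function $f:=\delta_{S,2}g$ to $[a+\theta,b]^3_<$. Since $f\in\mathscr{C}^{\mu}_{-\gamma}((a,b]^3_<;X_\alpha)$, on $[a+\theta,b]^3_<$ the singular factor $(r-a)^{-\gamma}$ is bounded by $\theta^{-\gamma}$, so
\begin{align*}
\|f\|_{\mathscr{C}^\mu([a+\theta,b]^3_<;X_\alpha)}\leq \theta^{-\gamma}\|f\|_{\mathscr{C}^\mu_{-\gamma}((a,b]^3_<;X_\alpha)}<\infty.
\end{align*}
Moreover, since $f=\delta_{S,2}g\in\mathrm{Im}(\delta_{S,2})$, Lemma \ref{lemma:ker=im} yields $f\in\mathrm{Ker}(\delta_{S,3})$. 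The restriction of $f$ to $[a+\theta,b]^3_<$ therefore satisfies all hypotheses of Proposition \ref{prop:new_sew_map} on the interval $[a+\theta,b]$, and so produces a unique function
\begin{align*}
M_\theta\in \bigcap_{\varepsilon\in[0,1)}\mathscr{C}^{\mu-\varepsilon}([a+\theta,b]^2_<;X_{\alpha+\varepsilon})
\end{align*}
such that $\hat\delta_2 M_\theta=\mathbb{S}_3 f$ on $[a+\theta,b]^3_<$.

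The next step is to verify compatibility: if $0<\theta_1<\theta_2<b-a$, then $M_{\theta_1}$ restricted to $[a+\theta_2,b]^2_<$ still lies in the class $\bigcap_{\varepsilon}\mathscr{C}^{\mu-\varepsilon}([a+\theta_2,b]^2_<;X_{\alpha+\varepsilon})$ and still satisfies $\hat\delta_2 M_{\theta_1}=\mathbb{S}_3 f$ on $[a+\theta_2,b]^3_<$; by the uniqueness clause of Proposition \ref{prop:new_sew_map} applied on $[a+\theta_2,b]$, this restriction coincides with $M_{\theta_2}$. Consequently, I can define $M$ unambiguously on $(a,b]^2_<$ by setting $M(s,t):=M_\theta(s,t)$ for any $\theta\in(0,s-a)$. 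By construction, $M\in\mathscr{C}^{\mu-\varepsilon}([a+\theta,b]^2_<;X_{\alpha+\varepsilon})$ for every $\theta\in(0,b-a)$ and every $\varepsilon\in[0,1)$, and the identity $\hat\delta_2 M=\mathbb{S}_3\delta_{S,2}g$ holds on $(a,b]^3_<$, because for any $(r,s,t)\in(a,b]^3_<$ one may take $\theta<r-a$ and read off the identity from $\hat\delta_2 M_\theta=\mathbb{S}_3 f$.

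For uniqueness on $(a,b]^2_<$, suppose $M_1$ and $M_2$ both satisfy the conclusion of the lemma. Then for each $\theta\in(0,b-a)$, both $M_1$ and $M_2$ lie in $\bigcap_{\varepsilon}\mathscr{C}^{\mu-\varepsilon}([a+\theta,b]^2_<;X_{\alpha+\varepsilon})$ and share the same $\hat\delta_2$ on $[a+\theta,b]^3_<$; by the uniqueness part of Proposition \ref{prop:new_sew_map}, $M_1=M_2$ on $[a+\theta,b]^2_<$. Letting $\theta\downarrow 0$ gives $M_1=M_2$ on $(a,b]^2_<$. The only point that requires care is the compatibility step, since the two candidate sewing maps on overlapping intervals live in different norms a priori; however the uniqueness lemma in Proposition \ref{prop:new_sew_map} only requires membership in $\bigcap_{\varepsilon}\mathscr{C}^{\mu-\varepsilon}$ on the smaller interval, so the argument goes through without friction.
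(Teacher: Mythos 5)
Your proposal is correct and follows essentially the same route as the paper: apply Proposition \ref{prop:new_sew_map} on each $[a+\theta,b]$, use its uniqueness clause to check that the local sewing maps agree on overlaps, and glue. The paper's proof is slightly terser (it does not spell out the bound $\theta^{-\gamma}$ on the singular factor or the kernel condition via Lemma \ref{lemma:ker=im}), but the argument is identical in substance.
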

\begin{proof}
We begin the proof by observing that the function $f=\delta_{S,2}g$ satisfies the assumptions of Proposition \ref{prop:new_sew_map} in $[a+\theta,b]^3_<$ for every $\theta\in(0,b-a)$. Hence, for every $\theta\in(0,b-a)$ there exists a unique function $M_{\theta}\in \displaystyle\bigcap_{\varepsilon\in[0,1)}
\mathscr{C}^{\mu-\varepsilon}([a+\theta,b]^2_<;X_{\alpha+\varepsilon})$ such that $\hat\delta_2 M_{\alpha}=\mathbb S_3f$ in $[a+\theta,b]^3_<$.

Note that, if $0<\theta_1<\theta_2<b-a$, then $M_{\theta_1}$ and $M_{\theta_2}$ coincide on $[a+\theta_2,b]^2_<$. Therefore, if for every $(s,t)\in(a,b]^2_<$ we set $M(s,t)=M_{\theta}(s,t)$ for some $\theta\in(0,s-a)$, then the function $M$ fulfills the required properties.
\end{proof}

\begin{theorem}
\label{thm:sew_map_sing_int}
Fix $\eta\in(0,1)$, $\mu>1$, $\alpha,\beta\in [0,2)$, with $0\leq \alpha-\beta<1$, $\gamma\in(0,\eta\wedge(\mu+\beta-\alpha))$ and suppose that $g$ belongs to $\mathscr{C}^{\eta}_{-\gamma}((a,b]^2_<;X_\alpha)$ and satisfies the condition $\delta_{S,2}g\in \mathscr{C}^{\mu}_{-\gamma}((a,b]_<^3;X_\beta)$. Then, the function $k_g:(a,b]^2_<\to X$, defined by $k_g(s,t):=S(t-s)g(s,t)-M(s,t)$ for every $(s,t)\in (a,b]^2_{<}$ $($where $M$ is the function defined in Lemma $\ref{lem:function_M})$ can be extended up to $s=a$ and it belongs to $\displaystyle\bigcap_{\varepsilon\in[0,\varepsilon_0)}\mathscr{C}^{\eta\wedge(\mu+\beta-\alpha)-\gamma-\varepsilon}([a,b]^2_<;X_{\alpha+\varepsilon})$, where
$\varepsilon_0=(1+\beta-\alpha)\wedge(\eta\wedge (\mu+\beta-\alpha)-\gamma)$.
Further, $\hat\delta_2 k_g=0$ on $[a,b]^3_<$ and there exists a positive constant $C$, which depends on $\varepsilon$, $\mu$, $b-a$, $\eta$, $\gamma$, $\alpha$ and $\beta$, such that
\begin{align}
\label{stima_sew_map_sing}
\|k_g\|_{\mathscr{C}^{\eta\wedge(\mu+\beta-\alpha)-\gamma-\varepsilon}([a,b]^2_<;X_{\alpha+\varepsilon})}\leq C(\|g\|_{\mathscr{C}^{\eta}_{-\gamma}((a,b]^2_<;X_\alpha)}+\|\delta_{S,2}g\|_{\mathscr{C}^{\mu}_{-\gamma}((a,b]^3_<;X_\beta)}).
\end{align}
\end{theorem}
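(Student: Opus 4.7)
\emph{Algebraic relation.} My starting point is the observation that $\hat\delta_2 k_g = 0$ on $(a,b]^3_<$, which is a ``Chen-type'' identity. This follows by computing $\hat\delta_2(S(\cdot-\cdot)g)(r,s,t)$ and using the semigroup law $S(t-s)S(s-r)=S(t-r)$ to identify the result with $\mathbb S_3\delta_{S,2}g(r,s,t)$, which coincides with $\hat\delta_2 M$ by Lemma~\ref{lem:function_M}. The identity rewrites as $k_g(r,t) = k_g(s,t) + S(t-s)k_g(r,s)$ and this ``group'' property will drive the extension of $k_g$ up to $s=a$.

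\emph{Singular estimate on $(a,b]^2_<$.} For admissible $\varepsilon \in [0,1+\beta-\alpha)$ I first establish the preliminary bound
\begin{align}
\label{plan-sing-est}
\|k_g(s,t)\|_{X_{\alpha+\varepsilon}} \le C(s-a)^{-\gamma}(t-s)^{\eta\wedge(\mu+\beta-\alpha)-\varepsilon}N,
\end{align}
with $N := \|g\|_{\mathscr C^\eta_{-\gamma}((a,b]^2_<;X_\alpha)} + \|\delta_{S,2}g\|_{\mathscr C^\mu_{-\gamma}((a,b]^3_<;X_\beta)}$. The term $S(t-s)g(s,t)$ is handled directly via \eqref{stime_smgr}(a) and the singular norm of $g$. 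For $M(s,t)$, uniqueness in Proposition~\ref{prop:new_sew_map} lets me identify $M$ on any sub-interval bounded away from $a$ with the sewing map associated with the restriction of $\delta_{S,2}g$ there, so that the sewing estimate gains a power $(t-s)^{\mu-\varepsilon''}$ in $X_{\beta+\varepsilon''}$ with $\varepsilon'' := \alpha-\beta+\varepsilon<1$; embedding into $X_{\alpha+\varepsilon}$ via Hypothesis~\ref{hyp-main}(ii) yields the $(t-s)^{\mu+\beta-\alpha-\varepsilon}$ contribution, and \eqref{plan-sing-est} is the sum of the two pieces.

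\emph{Extension to $s=a$ and H\"older estimate.} Fix $(s,t)\in(a,b]^2_<$ and consider the geometric sequence $r_n := a + 2^{-n}(s-a)$, $r_0 = s$, $r_n\downarrow a$. Iterating the Chen relation,
\begin{align*}
k_g(r_N,t) - k_g(s,t) = \sum_{n=1}^N S(t-r_{n-1})k_g(r_n,r_{n-1}).
\end{align*}
Applying \eqref{stime_smgr}(a) to pass from $X_\alpha$ to $X_{\alpha+\varepsilon}$ (with $(t-r_{n-1})^{-\varepsilon}\le (t-s)^{-\varepsilon}$) and using \eqref{plan-sing-est} with $\varepsilon=0$ to bound $\|k_g(r_n,r_{n-1})\|_{X_\alpha}$, the $n$-th term is controlled by $C(t-s)^{-\varepsilon}(s-a)^{\eta\wedge(\mu+\beta-\alpha)-\gamma}\,2^{-n(\eta\wedge(\mu+\beta-\alpha)-\gamma)}N$. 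Since $\eta\wedge(\mu+\beta-\alpha)-\gamma>0$ by hypothesis, the sum is geometric, so $\{k_g(r_N,t)\}$ is Cauchy in $X_{\alpha+\varepsilon}$ and its limit defines $k_g(a,t)$, with the bound $\|k_g(a,t)-k_g(s,t)\|_{X_{\alpha+\varepsilon}} \le C(t-s)^{-\varepsilon}(s-a)^{\eta\wedge(\mu+\beta-\alpha)-\gamma}N$. To upgrade \eqref{plan-sing-est} to the target H\"older bound $\|k_g(s,t)\|_{X_{\alpha+\varepsilon}}\le C(t-s)^{\eta\wedge(\mu+\beta-\alpha)-\gamma-\varepsilon}N$ on all of $[a,b]^2_<$, I distinguish two regimes: when $s-a\ge t-s$, the inequality $(s-a)^{-\gamma}\le (t-s)^{-\gamma}$ combined with \eqref{plan-sing-est} already gives the bound; when $s-a<t-s$, I first extract $\|k_g(a,\tau)\|_{X_{\alpha+\varepsilon}} \le C(\tau-a)^{\eta\wedge(\mu+\beta-\alpha)-\gamma-\varepsilon}N$ by inserting $s=(\tau+a)/2$ in the previous inequality (so that $\tau-s = s-a$), and then use the Chen identity $k_g(s,t) = k_g(a,t) - S(t-s)k_g(a,s)$ together with $t-a\le 2(t-s)$. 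The extension of $\hat\delta_2 k_g = 0$ to $r=a$ and continuity of $k_g$ on the diagonal with $k_g(s,s)=0$ complete the argument.

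\emph{Main obstacle.} The delicate step is the dyadic Cauchy argument: the sequence $r_n$ must collapse to $a$ (not to $t$), so that $(t-r_{n-1})^{-\varepsilon}$ remains uniformly bounded by $(t-s)^{-\varepsilon}$, while the geometric factors $(r_n-a)^{-\gamma}\sim 2^{n\gamma}$ and $(r_{n-1}-r_n)^{\eta\wedge(\mu+\beta-\alpha)}\sim 2^{-n(\eta\wedge(\mu+\beta-\alpha))}$ combine into a summable series precisely under the hypothesis $\gamma<\eta\wedge(\mu+\beta-\alpha)$. A secondary subtlety is the ``localisation'' of $M$ in the preliminary estimate, which rests on the uniqueness part of Proposition~\ref{prop:new_sew_map} in order to inject the singular norm $(s-a)^{-\gamma}$ into the bound for $M(s,t)$.
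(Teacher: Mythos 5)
Your proposal is correct, and it reaches estimate \eqref{stima_sew_map_sing} by a genuinely different route than the paper. The paper absorbs the singularity inside the sewing construction itself: it modifies the dyadic Riemann sums defining $M$ by dropping the sub-interval adjacent to $s$ (the functions $\widetilde M_n$ in \eqref{sfumature}) and re-runs the convergence estimate with the weight $(r-a)^{-\gamma}$, using the comparison $r^{n+1}_{2i-2}-a\ge \tfrac12(r^{n+1}_{2i}-a)$ and a Beta-function integral; this produces the uniform bound $\|k_g(s,t)\|_{X_{\alpha+\varepsilon}}\le C N(t-s)^{\eta\wedge(\mu+\beta-\alpha)-\gamma-\varepsilon}$ on all of $(a,b]^2_<$ in a single pass, after which the extension to $s=a$ follows at once from $k_g(r,t)-k_g(s,t)=S(t-s)k_g(r,s)$. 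You instead keep Proposition \ref{prop:new_sew_map} as a black box on $[a+\theta,b]$, first obtaining the weighted bound with the prefactor $(s-a)^{-\gamma}$ and the full H\"older exponent, and then trading the singularity for the loss of $\gamma$ in the exponent by iterating the Chen relation along the geometric sequence $r_n=a+2^{-n}(s-a)$; the same numerology $\gamma<\eta\wedge(\mu+\beta-\alpha)$ makes both arguments work. Your version is more modular (no weighted sewing lemma is needed), at the price of one point you should make explicit: the boundary value $k_g(a,t)$ is initially defined only as a limit along a particular geometric sequence, and both the identity $k_g(s,t)=k_g(a,t)-S(t-s)k_g(a,s)$ and the bound on $\|k_g(a,\tau)\|_{X_{\alpha+\varepsilon}}$ obtained with the midpoint anchor must be shown to refer to the same object. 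This is routine --- any two such sequences can be interleaved so that consecutive points $r<r'$ satisfy $r'-a\le 2(r-a)$, whence
\begin{align*}
\|k_g(r,t)-k_g(r',t)\|_{X_{\alpha+\varepsilon}}\le C(t-r')^{-\varepsilon}(r-a)^{-\gamma}(r'-r)^{\eta\wedge(\mu+\beta-\alpha)}N\le C(t-s)^{-\varepsilon}(r-a)^{\eta\wedge(\mu+\beta-\alpha)-\gamma}N\to 0
\end{align*}
--- but without some such remark the two-regime upgrade is not fully justified, since, as you correctly note in your ``main obstacle'', the naive Cauchy criterion as $s\to a^+$ fails if one only has the weighted preliminary estimate.
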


\begin{proof}
To begin with, we observe that the function $g$ satisfies the assumptions of Lemma \ref{lem:function_M}, with $\alpha$ being replaced by $\beta$.
Hence, there exists a unique function $M$, which belongs to $\displaystyle\bigcap_{\varepsilon\in [0,1)}\mathscr{C
}^{\mu-\varepsilon}([a+\theta,b]^2_{<};X_{\beta+\varepsilon})$ for every $\theta\in (0,b-a)$, such that $\hat\delta_2M=\mathbb{S}_3\delta_{S,2}g=\hat\delta_2{\mathbb S}_2g$ in $(a,b]^3_{<}$ (see Lemma \ref{lemma:commutz}). 

Let us fix $\varepsilon\in [0,\varepsilon_0)$, $(s,t)\in(a,b]^2_<$, with $s<t$, and $n\in\N$, and introduce the function
\begin{align}
\widetilde M_n(s,t)=\psi(s,t)-\sum_{i=2}^{2^n}S(t-r_i^n)\psi(r_{i-1}^n,r^n_i)
=M_n(s,t)+S(t-r^n_1)\psi(s,r^n_1),
\label{sfumature}
\end{align}
where $\psi=\mathbb S_2g$ for every $(s_1,s_2)\in (a,b]^2_{<}$,
$r_i^n=s+\frac{i}{2^n}(t-s)$ for every $n\in\N$ and $i=0,\ldots,2^n$, and the function $M_n$ is defined in 
\eqref{def_funz_M_n}.
We omit from the definition of $\widetilde M$, the term 
$S(t-r^n_1)\psi(s,r^{n+1}_1)$ in order to stay away from the singularity at $s=a$.
For every $n\geq 1$ we get
\begin{align*}
& \widetilde M_{n+1}(s,t)-\widetilde M_n(s,t) \\
=& M_{n+1}(s,t)-M_n(s,t)+S(t-r^{n+1}_1)\psi(s,r^{n+1}_1)
-S(t-r^n_1)\psi(s,r^n_1)\\
=& \sum_{i=2}^{2^n}S(t-r_{2i-1}^{n+1})(\delta_{S,2}g)(r_{2i-2}^{n+1},r^{n+1}_{2i-1},r^{n+1}_{2i}) \\
&-S(t-r^{n+1}_1)[(\delta_{S,2}g)(s,r^{n+1}_1,r^{n+1}_2)-\psi(s,r^{n+1})]-S(t,r^{n+1}_2)\psi(s,r^{n+1}_2)\\
= & \sum_{i=2}^{2^n}S(t-r_{2i-1}^{n+1})(\delta_{S,2}g)(r_{2i-2}^{n+1},r^{n+1}_{2i-1},r^{n+1}_{2i}) -S(t-r^{n+1}_1)g(r^{n+1}_1,r^{n+1}_2).
\end{align*}
Hence,
\begin{align}
& \|\widetilde M_{n+1}(s,t)-\widetilde M_n(s,t)\|_{X_{\alpha+\varepsilon}} \notag \\
\leq &  L_{\beta,\alpha+\varepsilon}\|\delta_{S,2}g\|_{\mathscr{C}^{\mu}_{-\gamma}((a,b]^3_<;X_\beta)}\sum_{i=2}^{2^n}|r_{2i}^{n+1}-r^{n+1}_{2i-2}|^{\mu}|t-r_{2i-1}^{n+1}|^{\beta-\alpha-\varepsilon}|r_{2i-2}^{n+1}-a|^{-\gamma} \notag  \\
& + L_{\alpha,\alpha+\varepsilon}\|g\|_{\mathscr{C}^{\eta}_{-\gamma}((a,b]^2_<;X_\alpha)}|t-r^{n+1}_1|^{-\varepsilon}2^{-(n+1)\eta}(t-s)^\eta|r_1^{n+1}-a|^{-\gamma} \notag \\
\leq & L_{\beta,\alpha+\varepsilon} 
\|\delta_{S,2}g\|_{\mathscr{C}^{\mu}_{-\gamma}((a,b]^3_<;X_\beta)}|t-s|^\mu2^{-n\mu}
\sum_{i=2}^{2^n}|t-r_{2i-1}^{n+1}|^{\beta-\alpha-\varepsilon}|r_{2i-2}^{n+1}-a|^{-\gamma} \notag \\
& +2^{-\eta+\gamma+\varepsilon}2^{n(\gamma-\eta)}L_{\alpha,\alpha+\varepsilon}\|g\|_{\mathscr{C}^{\eta}_{-\gamma}((a,b]^2_<;X_\alpha)}|t-s|^{\eta-\gamma-\varepsilon}, \label{stima_M_tilde}
\end{align}
where the constants $L_{\rho,\delta}=L_{\rho,\delta,b-a}$ have been defined in Hypothesis \ref{hyp-main}$(iii)$-$(a)$ and in the last step of \eqref{stima_M_tilde}, we have used the inequality
\begin{align*}
|t-r^{n+1}_1|^{-\varepsilon}|r^{n+1}_1-a|^{-\gamma}
=&(t-s)^{-\varepsilon}(1-2^{-n-1})^{-\varepsilon}\bigg (s-a+\frac{t-s}{2^{n+1}}\bigg )^{-\gamma}\\
\le &2^{\varepsilon}(t-s)^{-\varepsilon}
\bigg (\frac{t-s}{2^{n+1}}\bigg )^{-\gamma}
=2^{\varepsilon+\gamma(1+n)}(t-s)^{-\varepsilon-\gamma}.
\end{align*}

Let us estimate the first term in the right-hand side of \eqref{stima_M_tilde}. We stress that, differently from the computations in the proof of Proposition \ref{prop:new_sew_map}, we have the additional factor $|r^{n+1}_{2i-2}-a|^{-\gamma}$, which arises from the singularity of $\delta_{S,2}g$ at $a$. Note that
\begin{align*}
r_{2i-2}^{n+1}-a=r_{2i}^{n+1}-\frac{t-s}{2^n}-a\geq \frac12(r_{2i}^{n+1}-a) 
\end{align*}
if $i\geq 2$. Hence,
\begin{align*}
& \sum_{i=2}^{2^n}|t-r_{2i-1}^{n+1}|^{\beta-\alpha-\varepsilon}|r_{2i-2}^{n+1}-a|^{-\gamma} \\
\leq & \sum_{i=2}^{2^n}\frac{2^{\gamma}}{r_{2i}^{n+1}-r^{n+1}_{2i-1}}\int_{r_{2i-1}^{n+1}}^{r^{n+1}_{2i}}|t-r_{2i-1}^{n+1}|^{\beta-\alpha-\varepsilon}|r_{2i}^{n+1}-a|^{-\gamma}d\xi \\
\leq & 2^{\gamma+1} 2^{n}|t-s|^{-1}\int_{s}^t(t-\xi)^{\beta-\alpha-\varepsilon}(\xi-a)^{-\gamma}d\xi,
\end{align*}
where we have used the fact that the function $\xi\mapsto (t-\xi)^{\beta-\alpha-\varepsilon}$ is increasing in $(-\infty,t)$ and the function $\xi\mapsto (\xi-a)^{-\gamma}$ is decreasing in $(a,\infty)$. Further,
\begin{align*}
\int_{s}^t(t-\xi)^{\beta-\alpha-\varepsilon}(\xi-a)^{-\gamma}d\xi
\leq & \int_{s}^t(t-\xi)^{\beta-\alpha-\varepsilon}(\xi-s)^{-\gamma}d\xi \\
= &B(\beta-\alpha-\varepsilon+1,1-\gamma)(t-s)^{1+\beta-\alpha-\gamma-\varepsilon}.
\end{align*}
It follows that
\begin{align*}
& \|\widetilde M_{n+1}(s,t)-\widetilde M_n(s,t)\|_{X_{\alpha+\varepsilon}} \notag \\
\le &2^{\gamma+1+n(1-\mu)}L_{\beta,\alpha+\varepsilon}B(\beta-\alpha-\varepsilon+1,1-\gamma)\|\delta_{S,2}g\|_{\mathscr{C}^{\mu}_{-\gamma}((a,b]^3_<;X_\beta)}|t-s|^{\mu+\beta-\alpha-\gamma-\varepsilon}\\
& +2^{-\eta+\gamma+\varepsilon}2^{n(\gamma-\eta)}L_{\alpha,\alpha+\varepsilon}\|g\|_{\mathscr{C}^{\eta}_{-\gamma}((a,b]^2_<;X_\alpha)}|t-s|^{\eta
-\gamma-\varepsilon}. 
\end{align*}

Next, we note that
\begin{align*}
\widetilde M_n(s,t)-\psi(s,t)
= \sum_{k=1}^{n-1}[\widetilde M_{k+1}(s,t)-\widetilde M_k(s,t)]-\psi(2^{-1}(s+t),t)
\end{align*}
and, consequently, recalling that $\psi(r^1_1,r^1_2)=S(2^{-1}(t-s))g(2^{-1}(s+t),t)$ and taking into account that $|2^{-1}(s+t)-a|^{-\gamma}\le 2^{\gamma}(t-s)^{-\gamma}$, we can estimate
\begin{align}
\|\widetilde M_n(s,t)-\psi(s,t)\|_{X_{\alpha+\varepsilon}} \leq & C_1(\|\delta_{S,2}g\|_{\mathscr{C}^{\mu}_{-\gamma}((a,b]^3_<;X_\beta)}+\|g\|_{\mathscr{C}^{\eta}_{-\gamma}((a,b]^2_<;X_\alpha)})\notag \\
& \quad\times\bigg (
\sum_{k=1}^{n-1}(2^{k(1-\mu)}\!+\!2^{k(\gamma-\eta)})+1\bigg )|t-s|^{[\eta\wedge(\mu+\beta-\alpha)]-\gamma-\varepsilon}\notag \\
\le &C_2(\|\delta_{S,2}g\|_{\mathscr C^{\mu}_{-\gamma}((a,b]^3_<;X_\beta)}+\|g\|_{\mathscr C^{\eta}_{-\gamma}((a,b]^2_<;X_\alpha)})\notag\\
&\qquad\times
|t-s|^{[\eta\wedge (\mu+\beta-\alpha)]-\gamma-\varepsilon}
\label{magazzini}
\end{align}
for some positive constants $C_1$ and $C_2$ which depend on $\varepsilon$, $\mu$, $b-a$, $\gamma$, $\eta$, $\alpha$ and $\beta$,
since the series $\sum_{k=1}^{\infty}(2^{k(1-\mu)}+2^{k(\gamma-\eta)})$ converges.
Further, from \eqref{sfumature} we get
\begin{align*}
\|\widetilde M_n(s,t)-M_n(s,t)\|_{X_{\alpha+\varepsilon}}
\leq 2^{-n\eta}L_{\alpha,\alpha+\varepsilon}\|g\|_{\mathscr C^{\eta}_{-\gamma}((a,b]^2_{<};X_{\alpha})}(s-a)^{-\gamma}|t-s|^{\eta-\varepsilon}
\end{align*}
and we conclude that $\widetilde M_n(s,t)$ converges to $M(s,t)$ in $X_{\alpha+\varepsilon}$, as $n$ tends to $\infty$. Hence, letting $n$ tend to $+\infty$ in \eqref{magazzini}, it follows that the function $k_g=\psi-M$ satisfies the estimate
\begin{align}
\label{stima_int_sing}
\|k_g(s,t)\|_{X_{\alpha+\varepsilon}}
\le & C_2(\|\delta_{S,2}g\|_{\mathscr C^{\mu}_{-\gamma}((a,b]^3_<;X_\beta)}+\|g\|_{\mathscr C^{\eta}_{-\gamma}((a,b]^2_<;X_\alpha)})\notag\\
&\qquad\times|t-s|^{[\eta\wedge (\mu+\beta-\alpha)]-\gamma-\varepsilon}.
\end{align}

Showing that $(\hat\delta_2k_g)(s,t)=0$ is an easy task. Indeed, by the definition of the function $\psi$,
it follows that $\hat\delta_2\psi=\hat\delta_2{\mathbb S}_2g$ in $(a,b]^3_{<}$, which coincides
with $\hat\delta_2M$, as it has been shown at the very beginning of the proof.

Finally, to conclude the proof, we show that function $k_g$ can be extended to $[a,b]^2_{<}$ with a continuous function. First of all, we observe that $k_g$ is continuous in $(a,b]^2_{<}\setminus\{(s,s):s\in [a,b]\}$ since $\psi$ and $M$ are therein continuous. Moreover, using estimate \eqref{stima_int_sing}, we can extend the function $k_g$ by continuity to the points $(s,s)$ with $s\in (a,b]$ by setting $k_g(s,s)=0$. Next, we observe that, for every $a<r<s<t$, it holds that
\begin{align*}
k_g(r,t)-k_g(s,t)
=(\hat\delta_2 k_g)(r,s,t)+S(t-s)k_g(r,s)=S(t-s)k_g(r,s).
\end{align*}
From \eqref{stima_int_sing} we infer that
\begin{align*}
\|k_g(r,t)-k_g(s,t)\|_{X_{\alpha+\varepsilon}}
\leq L_{\alpha+\varepsilon,\alpha+\varepsilon}\|k_g(r,s)\|_{X_{\alpha+\varepsilon}}
\leq C|s-r|^{[\eta\wedge (\mu+\beta-\alpha)]-\gamma-\varepsilon}.
\end{align*}
This implies that the $k_g(s,t)$ converges in $X_{\alpha+\varepsilon}$ as $s$ tends to $a^+$. We denote the previous limit by $k_g(a,t)$. As a byproduct, \eqref{stima_int_sing} holds true for every $(s,t)\in(a,b]^2_<\setminus\{(a,a)\}$ and, using this formula we can extend by continuity $k_g$ at $(a,a)$ setting $k_g(a,a)=0$. It follows that $\hat\delta_2k_g\equiv 0$ in $[a,b]^3_{<}$.

It remains to prove the continuity of $k_g$ in $\{a\}\times [a,b]$. Fix $t_0\in[a,b]$ and $(s,t)\in[a,b]^2_{<}$. We show that $k_g(s,t)$ converges to $k_g(a,t_0)$ in $X_{\alpha+\varepsilon}$ as $(s,t)$ tends to $(a,t_0)$ in $[a,b]^2_<$. First, we consider the case $t_0>a$. If $t>t_0$, then, since $(\hat\delta_2k_g)(a,s,t)=(\hat\delta_2k_g)(a,t_0,t)=0$, it follows that
$k_g(a,t)=k_g(s,t)+S(t-s)k_g(a,s)$ 
and $k_g(a,t)=k_g(t_0,t)+S(t-t_0)k_g(a,t_0)$.
Hence,
\begin{align*}
k_g(s,t)-k_g(a,t_0)
= & k_g(s,t)-k_g(a,t)+k_g(a,t)-k_g(a,t_0) \\
=& -S(t-s)k_g(a,s)+k_g(t_0,t)+S(t-t_0)k_g(a,t_0)-k_g(a,t_0)\\
= & -S(t-s)k_g(a,s)+k_g(t_0,t)+\mathfrak a(t_0,t)k_g(a,t_0)
\end{align*}
On the other hand, if  $t<t_0$, then we can split 
\begin{align*}
k_g(s,t)-k_g(a,t_0)
= & k_g(s,t)-k_g(a,t)+k_g(a,t)-k_g(a,t_0) \\
= & -S(t-s)k_g(a,s)-(k_g(t,t_0)+\mathfrak a(t,t_0)k_g(a,t)),
\end{align*}
where we have used the formulas $k_g(a,t)=k_g(s,t)+S(t-s)k_g(a,s)$ 
and $k_g(a,t_0)=k_g(t,t_0)+k_g(a,t)+\mathfrak a(t,t_0)k_g(a,t)$. Combining the cases $t>t_0$ and $t<t_0$ it follows that
\begin{align}
\|k_g(s,t)-k_g(a,t_0)\|_{X_{\alpha+\varepsilon}}
\leq & L_{\alpha+\varepsilon,\alpha+\varepsilon}\|k_g(a,s)\|_{X_{\alpha+\varepsilon}}+\|k_g(t_0\wedge t,t_0\vee t)\|_{X_{\alpha+\varepsilon}}\notag \\
& +\|\mathfrak a(t_0\wedge t,t_0\vee t)k_g(a,t_0\wedge t)\|_{X_{\alpha+\varepsilon}}.
\label{prova}
\end{align}
From \eqref{stime_smgr}$(b)$ and estimate \eqref{stima_int_sing}, we get
\begin{align}
&\|\mathfrak a(t_0\wedge t,t_0\vee t)k_g(a,t_0\wedge t)\|_{X_{\alpha+\varepsilon}}\notag\\
\le &C_{\alpha+\varepsilon,\alpha+\varepsilon'}\|k_g(a,t_0\wedge t)\|_{X_{\alpha+\varepsilon'}}|t-t_0|^{\varepsilon'-\varepsilon}\notag\\
\le &C_2C_{\alpha+\varepsilon,\alpha+\varepsilon'}(\|\delta_{S,2}g\|_{\mathscr C^{\mu}_{-\gamma}((a,b]^3_<;X_\beta)}+\|g\|_{\mathscr C^{\eta}_{-\gamma}((a,b]^2_<;X_\alpha)})\notag\\
&\qquad\times|b-a|^{[\eta\wedge (\mu+\beta-\alpha)]-\gamma-\varepsilon}|t-t_0|^{\varepsilon'-\varepsilon}
\label{itinere}
\end{align}
for every $\varepsilon'\in (\varepsilon,\varepsilon_0)$.
From \eqref{stima_int_sing}, \eqref{prova} and \eqref{itinere}  we easily conclude that $k_g(s,t)$ tends to $k_g(a,t_0)$ as $(s,t)$ tends to $(a,t_0)$.

Finally, if $t_0=a$, then, since
$\|k_g(s,t)-k_g(a,a)\|_{X_{\alpha+\varepsilon}}
=\|k_g(s,t)\|_{X_{\alpha+\varepsilon}}$, using 
\eqref{stima_int_sing} we conclude that $k_g(s,t)$ converges to $k(a,t_0)$ as $(s,t)$ tends to $(a,t_0)$. \end{proof}

\begin{remark}
\label{lin-int}
{\rm 
\begin{enumerate}[\rm (i)]
\item
From the proof of Theorem \ref{thm:sew_map_sing_int}, it follows that $k_{g_1+g_2}=k_{g_1}+k_{g_2}$ for every pair of functions $g_1,g_2\in \mathscr{C}^{\eta}_{-\gamma}((a,b]^2_{<};X_{\alpha})$ such that
$\delta_{S,2}g_1$ and $\delta_{S,2}g_2$ belong to $\mathscr{C}^{\mu}_{-\gamma}((a,b]^3_{<};X_{\beta})$, where the parameters $\alpha$, $\beta$, $\gamma$, $\eta$ and $\mu$ are as in the statement of the quoted theorem.
\item
Still from the proof of Theorem \ref{thm:sew_map_sing_int} it follows that condition \eqref{stime_smgr}(a) is used just to prove that $k_g$ regularizes in space. Hence, without such a condition and assuming that $\beta=\alpha$, the assertion of Theorem \ref{thm:sew_map_sing_int} still holds true with $\varepsilon=0$, i.e., the function $k_g$ exists, belongs  to $\mathscr C^{\eta-\gamma}([a,b]^2_<;X_\alpha)$ and enjoys estimate \eqref{stima_int_sing} with $\varepsilon=0$ and the constant $C$, therein appearing, depends on $b-a$, $\eta$, $\gamma$ and $\alpha$. 
\end{enumerate}}
\end{remark}

Based on Theorem \ref{thm:sew_map_sing_int} and Remark \ref{rmk:int_sing_confr_caso_classico}, 
we can now give the following definition, which generalizes Definition \ref{def-2.8}.

\begin{definition}
Let $g\in \mathscr{C}^{\eta}_{-\gamma}((a,b]_<^2;X_\alpha)$ be such that $\delta_{S,2}g\in \mathscr{C}^{\mu}_{-\gamma}((a,b]^3_<;X_\beta)$, where 
$\eta\in (0,1)$, $\mu>1$, $0\leq \beta\leq \alpha\leq 2$, $\alpha-\beta<1$ and $\gamma\in[0,\eta\wedge(\mu+\beta-\alpha))$. Then, the function ${\mathscr I}_g=k_g(a,\cdot)$
is called {\textbf{convolution integral of $\bm{g}$}}.
\end{definition}

\begin{example}
\label{example:sing_int_1}
{Fix $\eta\in(0,1)$, $\alpha,\beta\in [0,2)$ such that $0\le \alpha-\beta<1$, $\rho\in(1-\eta,1)$, $\gamma\in[0,\eta\wedge (\rho+\eta+\beta-\alpha))$, $x\in C^\eta([a,b])$ and $\varphi\in \mathscr{C}_{-\gamma}((a,b];X_\alpha)$ such that $\hat\delta_1\varphi\ \mathscr{C}^{\rho}_{-\gamma}((a,b]^2_<;X_{\beta})$. Let $g(s,t)=(x(t)-x(s))\varphi(s)$ for every $(s,t)\in[a,b]^2_<$, as in Example $\ref{ex:new_sew_map}$.
It is easy to check that $g$ belongs to $\mathscr{C}^{\eta}_{-\gamma}((a,b]^2_<;X_\alpha)$. Moreover, Example $\ref{ex:new_sew_map}$ shows that $(\delta_{S,2}g)(r,s,t)=-(x(t)-x(s))(\hat\delta_1\varphi)(r,s)$ for every $(r,s,t)\in (a,b]^3_{<}$. Hence, the function $\delta_{S,2}g$ belongs to 
$\mathscr{C}^{\mu}_{-\gamma}((a,b]^3_<;X_{\beta})$ where $\mu=\eta+\rho>1$, and
\begin{align*}
(r-a)^\gamma\|(\delta_{S,2}g)(r,s,t)\|_{X_{\beta}}\leq \|x\|_{C^{\eta}([a,b])}\|\hat\delta_1\varphi\|_{\mathscr{C}^{\rho}_{-\gamma}((a,b]^2_<;X_{\beta})}|t-r|^{\mu},
\end{align*}
for every $(r,s,t)\in(a,b]^3_<$.}
\end{example}

In view of Theorem \ref{thm:sew_map_sing_int} and Example 
\ref{example:sing_int_1}, we can give the following definition.

\begin{definition}
\label{def-pioggia}
Fix $\alpha,\beta\in [0,2)$, with $0\le \alpha-\beta<1$, $\eta\in (0,1)$, $\rho\in(1-\eta,1)$, $\gamma\in [0,\eta\wedge(\rho+\eta+\beta-\alpha))$ and $x\in C^\eta([a,b])$. Further, let $\varphi\in \mathscr{C}_{-\gamma}((a,b];X_{\alpha})$ be such that $\hat\delta_1\varphi\in \mathscr{C}^{\rho}_{-\gamma}((a,b]^2_{<};X_{\beta})$. Then, we define the convolution integral of the semigroup $(S(t))_{t\ge 0}$ with the function $\varphi$, by setting 
\begin{align*}
\int_s^tS(t-r)\varphi(r)dx(r)=:k_g(s,t),
\end{align*}
for every $(s,t)\in [a,b]^2_{<}$,
where $g(s,t)=(x(t)-x(s))\varphi(s)$ for every $(s,t)\in [a,b]^2_{<}$.
\end{definition}

In what follows, for every $(s,t)\in [a,b]^2_{<}$ we will also use  the shorter notation
${\mathscr I}_{S,\varphi}(s,t):=k_g(s,t)$ to denote the convolution integral in  Definition \ref{def-pioggia}. The notation underline the dependence on the semigroup $(S(t))_{t\ge 0}$ and on the function $\varphi$.

\begin{remark}
\label{lemma:def_int_conv}
{\rm Let $x$ and $\varphi$ be as in Definition \ref{def-pioggia}. Then, from Theorem \ref{thm:sew_map_sing_int} it follows that ${\mathscr I}_{S,\varphi}$ belongs to $\displaystyle\bigcap_{\varepsilon\in [0,\varepsilon_0)}\mathscr{C}^{\eta\wedge (\eta+\rho+\beta-\alpha)-\gamma-\varepsilon}([a,b]^2_<;X_{\alpha+\varepsilon})$, where
$\varepsilon_0=(1+\beta-\alpha)\wedge (\eta\wedge (\eta+\rho+\beta-\alpha)-\gamma)$,
and, for every $\varepsilon\in [0,\varepsilon_0)$, there exists a positive constant $C$, which depends on $\varepsilon$, $b-a$, $\eta$, $\gamma$, $\alpha$, $\beta$ and $\rho$, such that
\begin{align}
&\sup_{(t,s)\in [a,b]^2_{<}}(t-s)^{-\eta\wedge (\eta+\rho+\beta-\alpha)+\gamma+\varepsilon}\bigg\|\int_s^tS(t-r)\varphi(r)dx(r)\bigg\|_{X_{\alpha+\varepsilon}}\notag\\
\le &C\|x\|_{C^{\eta}([a,b])}
(\|\varphi\|_{\mathscr{C}_{-\gamma}((a,b];X_{\alpha})}+\|\hat\delta_1\varphi\|_{\mathscr{C}^{\rho}_{-\gamma}((a,b]^2_{<};X_{\beta})})
\label{stima-utilee}
\end{align}
for every $s,t\in [a,b]^2_{<}$.}
\end{remark}

\begin{remark}
\label{rmk:int_sing_confr_caso_classico}
{\rm We stress that, if $x\in C^1([a,b])$, $\varphi\in \mathscr{C}_{-\gamma}((a,b];X_\alpha)$ and $\hat\delta_{1}\varphi\in \mathscr{C}^{\alpha}_{-\gamma}((a,b]^2_<;X)$ with $\alpha\in(0,1)$ and $\gamma$ satisfying the assumptions of the quoted theorem, then the function ${\mathscr I}_{S,\varphi}$ is the classical convolution of the semigroup with the function $\varphi$, i.e.,
\begin{align}
\label{int_caso_classico}
{\mathscr I}_{S,\varphi}(s,t)=\int_s^tS(t-\xi)\varphi(\xi)x'(\xi)d\xi, \qquad\;\, (s,t)\in[a,b]^2_<.
\end{align}

Note that
\begin{align*}
\int_s^tS(t-\xi)\varphi(\xi)x'(\xi)d\xi= & \int_s^t S(t-\xi)\big[ S(\xi-s)\varphi(s)+(\hat\delta_1\varphi)(s,\xi)\big]x'(\xi)d\xi \\
=&S(t-s)g(s,t)+\int_s^tS(t-\xi)(\hat\delta_1\varphi)(s,\xi)x'(\xi)d\xi \\
=&\!: S(t-s)g(s,t)+N(s,t)
\end{align*}
for every $(s,t)\in [a,b]^2_{<}$. 

Thus, we need to prove that $M+N=0$ where $M$ is defined in Example \ref{ex:new_sew_map}, see also Proposition \ref{prop:new_sew_map}.

Taking the definition of the operators $\hat\delta_1$, $\mathbb S_3$ and formula \eqref{pixel} into account, we can write
\begin{align*}
(\hat\delta_2N)(r,s,t)=&\int_r^tS(t-\xi)(\hat\delta_1\varphi)(r,\xi)x'(\xi)d\xi
-\int_s^tS(t-\xi)(\hat\delta_1\varphi)(s,\xi)x'(\xi)d\xi \\
& -S(t-s)\int_r^sS(s-\xi)(\hat\delta_1\varphi)(r,\xi)x'(\xi)d\xi\\
= & \int_s^tS(t-\xi)(\hat\delta_1\varphi)(r,\xi)x'(\xi)d\xi-\int_s^tS(t-\xi)(\hat\delta_1\varphi)(s,\xi)x'(\xi)d\xi\\
=&\int_s^t(S(t-s)\varphi(s)-S(t-r)\varphi(r))x'(\xi)d\xi\\
=&S(t-s)(\varphi(s)-S(s-r)\varphi(r))(x(t)-x(s))\\
=&S(t-s)(\hat\delta_1\varphi)(r,s)(x(t)-x(s))\\
=&-(\mathbb S_3\delta_{S,2}g)(r,s,t)
\end{align*}
for every $(r,s,t)\in [a,b]^3_{<}$. Thus, the function  $A=M+N$ satisfies the condition $\hat\delta_2A\equiv 0$ in $[a,b]^3_{<}$.

To conclude that $A$ identically vanishes in $(a,b]^2_{<}$, it suffices to show that it belongs to
$\mathscr{C}^{\mu}([a+\theta,b]^2_{<};X)$ for every $\theta\in (0,b-a)$ and then apply \cite[Proposition 3.4]{GT10}. By Lemma \ref{lem:function_M}, $M$ belongs to such a space. On the other hand,
\begin{align*}
\|N(s,t)\|_X
\leq & \theta^{-\gamma} L_{0,0,b-a}\|x'\|_{C([a,b])}\|\hat\delta_1\varphi\|_{\mathscr{C}^{\alpha}_{-\gamma}((a,b]^2_<;X)}\int_s^t(\xi-s)^{\alpha}dr \\
= & \theta^{-\gamma} L_{0,0,b-a}(\alpha+1)^{-1}\|x'\|_{C([a,b])}\|\hat\delta_1\varphi\|_{\mathscr{C}^{\alpha}_{-\gamma}((a,b]^2_<;X)}(t-s)^{1+\alpha}
\end{align*}
for every $(s,t)\in[a+\theta,b]^2_<$, with $\theta\in(0,b-a)$, and every $\varepsilon\in (0,1)$, and, consequently, $N$ belongs to $\mathscr{C}^{1+\alpha}([a+\theta,b]^2_{<};X)\hookrightarrow \mathscr{C}^{\mu}([a+\theta,b]^2_{<};X)$ for every $\theta\in (0,b-a)$, since $1+\alpha>\eta+\alpha=\mu$.}
\end{remark}

\begin{remark}
{\rm The results in Theorem \ref{thm:sew_map_sing_int} are optimal as far as both the time and the spatial regularity are concerned.

Indeed, if we refer again to Remark \ref{rmk:int_sing_confr_caso_classico}, then 
the classical convolution integral in \eqref{int_caso_classico} belongs to $\mathscr{C}^{1-\varepsilon-\gamma}([a,b]^2_<;X_{\alpha+\varepsilon})$ for every $\varepsilon\in[0,1-\gamma)$. On the other hand, the quoted theorem shows that ${\mathscr I}_{S,\varphi}$ belongs to $\mathscr{C}^{\eta-\varepsilon-\gamma}([a,b]^2_<;X_{\alpha+\varepsilon})$ for every $\eta\in (0,1)$ and $\varepsilon$ as above, and the constant 
$C$ appearing in \eqref{stima_sew_map_sing} does not blow up as $\eta$ tends to $1$ from below. Therefore, 
in this situation where $x\in C^1([a,b])$, estimate
\eqref{stima_sew_map_sing} shows that ${\mathscr I}_{S,\varphi}$ belongs to $\mathscr{C}^{1-\gamma-\varepsilon}(([a,b]^2_{<};X_{\alpha+\varepsilon})$
for every $\varepsilon\in[0,1-\gamma)$, so that
the time regularity in Theorem \ref{thm:sew_map_sing_int} is optimal.

To prove also the spatial optimality of the result in Theorem \ref{thm:sew_map_sing_int}, we consider the case when $(S(t))_{t\ge 0}$ is an analytic semigroup in the Banach space $X$ and $X_{\beta}=D_A(\beta,\infty)$ for every $\beta\in (0,1)$. If we choose $[a,b]=[0,1]$, $x(t)=t$ for every $t\in [0,1]$ and $\varphi(t)=S(t)y$ for some $y\in X_{\alpha-\gamma}$, where $\gamma$ is fixed in $(0,\alpha)$ for some $\alpha\in (0,1)$, and $y$ does not belong to any space
$X_{\beta}$ with $\beta>\alpha-\gamma$, then we can easily check that the classical convolution $k_g$ is given by $k_g(s,t)=(t-s)S(t)y$ for every $0\le s\le t\le 1$ and it does not belong to the space $\mathscr{C}^{\delta}([0,1]^2_{<};X_{\alpha+\varepsilon})$ if $\delta>1-\varepsilon-\gamma$ for every $\varepsilon\in (0,1-\gamma)$.
Indeed, suppose that this function belongs to $\mathscr{C}^{\delta}([0,1]^2_{<};X_{\alpha+\varepsilon})$ for some $\delta>1-\varepsilon-\gamma$. Then, in particular,
\begin{eqnarray*}
\|S(t)y\|_{X_{\alpha+\varepsilon}}=\|k_g(0,t)\|_{X_{\alpha+\varepsilon}}\le \|k_g\|_{C_{\delta}([0,1]^2_{<};X_{\alpha+\varepsilon})}t^{\delta-1},\qquad\;\,t\in (0,1].
\end{eqnarray*}
We claim that this estimate implies that
$y$ actually belongs to the space $X_{\alpha+\varepsilon+\delta-1}$. Since
$\alpha+\varepsilon+\delta-1>\alpha-\gamma$ we are led to a contradiction.

To prove the claim, we recall that $z$ belongs to $X_{\beta}=D_A(\beta,\infty)$ for some $\beta\in (0,1)$ if and only if $\displaystyle\sup_{t\in (0,1]}t^{1-\beta}\|AS(t)z\|_X<\infty$. Using the semigroup law, we can estimate
\begin{align*}
t^{2-\alpha-\varepsilon-\delta}\|AS(t)y\|_X
=&t^{2-\alpha-\varepsilon-\delta}\|AS(t/2)S(t/2)y\|_X\\
\le &t^{2-\alpha-\varepsilon-\delta}\|AS(t/2)\|_{L(X_{\alpha+\varepsilon},X)}\|S(t/2)y\|_{X_{\alpha+\varepsilon}}\\
\le & Ct^{2-\alpha-\varepsilon-\delta}t^{-1+\alpha+\varepsilon}t^{\delta-1}=C
\end{align*}
for every $t\in (0,1]$ and the claim follows.}
\end{remark}

\begin{remark}
\label{rmk-abbruzzetti}
{\rm If $\gamma=0$, i.e., in the non singular case, Example \ref{example:sing_int_1} shows that Theorem \ref{thm:sew_map_sing_int} agrees with the results in \cite[Lemma 2.1]{ALT}.}
\end{remark}

\begin{remark}
\label{rmk:prop_int_young}
{\rm 
\begin{enumerate}[\rm(i)]
\item 
From Remark \ref{lin-int}(i) it follows that, if $x_1,x_2\in C^\eta([a,b])$, for some $\eta\in (0,1)$, and $\varphi_1,\varphi_2\in \mathscr{C}_{-\gamma}((a,b];X_{\alpha})$ are such that
$\hat\delta_1\varphi_1, \hat\delta_1\varphi_2\in \mathscr{C}^{\rho}_{-\gamma}((a,b]^2_{<};X_\beta)$ for some $\alpha,\beta\in[0,2)$ with $0\leq \alpha-\beta<1$, $\rho\in (1-\eta,1)$ and $\gamma\in[0,\eta\wedge (\eta+\rho+\beta-\alpha))$, then
\begin{align*}
&\int_s^tS(t-r)\varphi_1(r)d(x_1+x_2)(r)\\
=&\int_s^tS(t-r)\varphi_1(r)dx_1(r)+\int_s^tS(t-r)\varphi_1(r)dx_2(r)
\end{align*}
and
\begin{align*}
&\int_s^tS(t-r)(\varphi_1(r)\!+\!\varphi_2(r))dx_1(r)\\
=&\int_s^tS(t-r)\varphi_1(r)dx_1(r)\!+\!\int_s^tS(t-r)\varphi_2(r)dx_1(r)
\end{align*}
for every $(s,t)\in [a,b]^2_{<}$.
\item
For every every $x\in C^{\eta}([a,b])$, with $\eta\in (0,1)$, and every $\varphi\in \mathscr{C}_{-\gamma}((a,b];X_{\alpha})$ such that $\hat\delta_1\varphi\in \mathscr{C}^{\rho}_{-\gamma}((a,b]^2;X_\beta)$ for some $\alpha,\beta\in[0,2)$ with $0\leq \alpha-\beta<1$, $\rho\in (1-\eta,1)$ and $\gamma\in[0,\eta\wedge (\eta+\rho+\beta-\alpha))$,it holds that
\begin{align*}
\int_s^tS(t-r)\varphi(r)dx(r)=&S(t-\tau)\int_s^\tau S(\tau-r)\varphi(r)dx(r)\\
&+\int_\tau^tS(t-r)\varphi(r)dx(r)
\end{align*}
for every $(s,t)\in [a,b]^2_{<}$ and $\tau\in[s,t]$.
This property is a straightforward rewriting of the property $\hat\delta_2 k_g\equiv 0$ follows easily from observing that $\hat\delta_1{\mathscr I}_{S\varphi}$ identically vanishes in $[a,b]^3_{<}$.
\end{enumerate}
}
\end{remark}

\subsection{The case $\bm{S(t)=Id}$}

In this section, taking advantage of the results of the previous subsection, we define the integral 
$\displaystyle\int_s^t\varphi(r)dx(r)$, when $\varphi$ has a singularity at the left-endpoint of the interval where it is defined.

The main result is the following theorem.

\begin{theorem}
\label{coro:sing_int}
Fix $\alpha\in [0,2)$. Assume that $x\in C^{\eta}([a,b])$, for some $\eta\in (0,1)$, $\varphi\in \mathscr{C}_{-\gamma}((a,b];X_{\alpha})$ and $\delta_1\varphi\in \mathscr{C}^{\rho}_{-\gamma}((a,b]^2_{<};X_{\alpha})$, for some $\gamma\in (0,\eta)$ and some $\rho\in (1-\eta,1)$. Then, the Young integral
\begin{align*}
\int_s^t\varphi(r)dx(r)    
\end{align*}
is well defined for every $(s,t)\in [a,b]^2_{<}$. Moreover, there exists a positive constant $C$, depending on $\alpha$, $\gamma$, $\eta$, $\rho$ and $b-a$, such that
\begin{align*}
\bigg\|\int_s^t\varphi(r)dx(r)    
\bigg\|_{X_{\alpha}}\!\!\!\!
\le &
C\|x\|_{C^{\eta}([a,b])}(\|\varphi\|_{\mathscr{C}_{-\gamma}((a,b];X_{\alpha})}+\|\delta_1\varphi\|_{\mathscr{C}^{\rho}_{-\gamma}((a,b]^2_{<};X_{\alpha})}) |t-s|^{\eta-\gamma}.
\end{align*}
\end{theorem}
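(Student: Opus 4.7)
The plan is to reduce the statement to an application of Theorem~\ref{thm:sew_map_sing_int} in the special case of the trivial semigroup $S(t)\equiv {\rm Id}_X$. Observe that in this setting the operator $\hat\delta_1$ coincides with $\delta_1$, so the hypothesis on $\delta_1\varphi$ is exactly the hypothesis on $\hat\delta_1\varphi$ that the abstract machinery requires.

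Following Example~\ref{example:sing_int_1}, I would introduce the auxiliary map $g:(a,b]^2_<\to X_\alpha$ defined by $g(s,t):=(x(t)-x(s))\varphi(s)$. From $x\in C^\eta([a,b])$ and $\varphi\in\mathscr C_{-\gamma}((a,b];X_\alpha)$ one immediately gets
\begin{equation*}
(s-a)^{\gamma}\|g(s,t)\|_{X_\alpha}\leq \|x\|_{C^\eta([a,b])}\|\varphi\|_{\mathscr C_{-\gamma}((a,b];X_\alpha)}|t-s|^\eta,
\end{equation*}
so that $g\in\mathscr C^{\eta}_{-\gamma}((a,b]^2_<;X_\alpha)$. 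Next, using Definition~\ref{def:def_incr_comm} with $S\equiv {\rm Id}_X$, the computation in \eqref{pixel} (with the identity in place of the semigroup) yields
\begin{equation*}
(\delta_{S,2}g)(r,s,t)=-(x(t)-x(s))(\delta_1\varphi)(r,s),\qquad (r,s,t)\in(a,b]^3_<,
\end{equation*}
and hence, setting $\mu:=\eta+\rho>1$, one obtains $\delta_{S,2}g\in\mathscr C^{\mu}_{-\gamma}((a,b]^3_<;X_\alpha)$, with
\begin{equation*}
(r-a)^{\gamma}\|(\delta_{S,2}g)(r,s,t)\|_{X_\alpha}\leq \|x\|_{C^\eta([a,b])}\|\delta_1\varphi\|_{\mathscr C^{\rho}_{-\gamma}((a,b]^2_<;X_\alpha)}|t-r|^{\mu}.
\end{equation*}

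With these ingredients I would invoke Theorem~\ref{thm:sew_map_sing_int} with $\beta=\alpha$ and $\varepsilon=0$: the parameter constraints $0\le\alpha-\beta=0<1$, $\mu>1$, and $\gamma\in(0,\eta\wedge(\mu+\beta-\alpha))=(0,\eta)$ are automatic from the assumptions of the present theorem, and the resulting exponent of $|t-s|$ is $\eta\wedge(\eta+\rho)-\gamma=\eta-\gamma$. The function $k_g\in\mathscr C^{\eta-\gamma}([a,b]^2_<;X_\alpha)$ provided by the theorem is then, by specializing Definition~\ref{def-pioggia} to $S\equiv {\rm Id}_X$, precisely what is meant by the Young integral $\int_s^t\varphi(r)dx(r)$, and estimate \eqref{stima_sew_map_sing} gives the claimed bound with the constant $C$ depending only on $\alpha$, $\gamma$, $\eta$, $\rho$ and $b-a$.

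The one point requiring care, and the main obstacle, is that Hypothesis~\ref{hyp-main}(iii)-(a) fails for the identity semigroup whenever $\zeta<\lambda$, so the full spatially-regularizing conclusion of Theorem~\ref{thm:sew_map_sing_int} is not available in this degenerate setting. However, as explicitly recorded in Remark~\ref{lin-int}(ii), the smoothing estimate \eqref{stime_smgr}(a) is exploited in the proof of that theorem only to gain the extra $\varepsilon$ of spatial regularity: with the choices $\beta=\alpha$ and $\varepsilon=0$ the entire construction via the dyadic sequence $\widetilde M_n$ goes through verbatim, since every step that uses $L_{\beta,\alpha+\varepsilon}$ reduces to bounding $S(t)={\rm Id}_X$ on $X_\alpha$. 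Once this bookkeeping is settled, the proof is a direct transcription.
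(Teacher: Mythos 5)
Your proposal is correct and follows exactly the route the paper takes: specialize Theorem \ref{thm:sew_map_sing_int} to $S(t)\equiv {\rm Id}_X$ with $\beta=\alpha$, $\varepsilon=0$, verify the hypotheses via the auxiliary function $g(s,t)=(x(t)-x(s))\varphi(s)$ as in Example \ref{example:sing_int_1}, and invoke Remark \ref{lin-int}(ii) to dispense with the smoothing condition \eqref{stime_smgr}(a) that the identity semigroup fails to satisfy. The paper's proof is a two-line reduction to exactly these ingredients, so your argument simply spells out the same reasoning in more detail.
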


\begin{proof}
It suffices to apply Theorem \ref{thm:sew_map_sing_int}, with $S(t)=I$ for every $t>0$, observing that $\hat\delta_j=\delta_j$ for $j=1,2$ and $\delta_{S,2}=\delta_2$, and taking Remark \ref{lin-int}(ii) into account. Note that condition \eqref{stime_smgr}(b) is trivially satisfied since $\mathfrak a(s,t)=0$ for every $(s,t)\in[a,b]^2_<$.
\end{proof}

\begin{example}
Let $x$ belong to $C^\eta([0,T])$ for some $T>0$ and $\eta\in (0,1)$. For every $\alpha\in (0,1)$ the function $f:(0,T]\to\R$, defined by $f(t)=t^{-\alpha}$ for every $t\in (0,T]$, belongs to $ \mathscr{C}^{\alpha}_{-2\alpha}((0,T];\R)$. Indeed, it is easy to check that
\begin{align*}
\frac{|t^{-\alpha}-s^{-\alpha}|}{(t-s)^\alpha}s^{2\alpha}
=\frac{t^{\alpha}-s^{\alpha}}{(t-s)^\alpha}s^{\alpha}t^{-\alpha}
\leq s^{\alpha}t^{-\alpha}\leq 1, \qquad\;\, 0<s<t\leq T.
\end{align*}
Therefore, if we take $\alpha$ and $\eta$ such that $\alpha<\frac{1}{2}\eta$ and $\alpha+\eta>1$, then the assumption of Theorem $\ref{coro:sing_int}$ are fulfilled and the integral
\begin{align*}
\int_s^tr^{-\alpha}dx(r)
\end{align*}
is well defined for every $s,t\in [0,T]$, with $s<t$.
\end{example}

\section{Mild solutions to Young equations}
\label{sect-3}
In this section we study the existence and uniqueness of the mild solution to the following nonlinear Young equation
\begin{align}
\label{omo_equation}
\left\{
\begin{array}{ll}
dy(t)=Ay(t)dt+\sigma(y(t))dx(t), &t\in(0,T],\\[1mm]
y(0)=\psi,
\end{array}
\right.
\end{align}
where by mild solution we mean a function $y:[0,T]\to X$ such that $\mathscr I_{S,\sigma\circ y}$ is well-defined in $[0,T]^2_<$ and
\begin{equation}
y(t)=S(t)\psi+({\mathscr I}_{S,\sigma\circ y})(0,t),\qquad\;\,t\in [0,T].
\label{reg_sol-aaa}
\end{equation}
Further, we investigate the spatial smoothness of the mild solution. 

\begin{remark}
{\rm Let us observe that $\mathscr{I}_{S,\sigma\circ y}(0,t)$ converges to $0$ in $X$ as $t$ tends to $0$, due to Remark \ref{lemma:def_int_conv}.
The continuity at $0$ of the term $S(\cdot)\psi$ requires a more detailed discussion.
If the semigroup $(S(t))_{t\ge 0}$ is strongly continuous 
then, for every $\psi\in X$ (which is allowed by Theorem \ref{thm:ex_mild_sol_1}), $S(t)\psi$ converges in $X$ to $\psi$ as $t$ tends to $0$ and the initial condition can be classically interpreted. The point is that, under our assumptions, the semigroup could be not strongly continuous. In this situation, if $\psi\in X_{\theta}$ for some $\theta>0$, then, due to condition
\eqref{stime_smgr}(b), $S(t)\psi$ converges to $\psi$ in $X$ as $t$ tends to $0$ and again  the initial condition can be classically interpreted. On the contrary, if $\psi$  only belongs to $X$, then, in general, $S(t)\psi$ does not admit limit as $t$ tends to $0$. The initial condition is satisfied by the mild solution in this sense: for every $t_0>0$, $S(t_0)y(t)$ converges in $X$ to $S(t_0)\psi$. Indeed, by Remark \ref{rmk-2.2}, the function $t\mapsto S(t)x$ is continuous in $(0,+\infty)$ for every $x\in X$.

We finally observe that, if $(S(t))_{t\ge 0}$ is an analytic semigroup and $\psi\in\overline{D(A)}$, then $y(t)$ converges to $\psi$ in $X$ as $t$ tends to $0$, If $\overline{D(A)}$ is a proper subspace of $X$ (i.e., the semigroup is not strongly continuous) and $\psi$ does not belong to it, then the initial condition can also be interpreted as follows: for every $\lambda\in\rho(A)$, $R(\lambda,A)y(t)$ converges in $X$ to $R(\lambda,A)\psi$ as $t$ tends to $0$.}
\end{remark}

We split this section into three parts. In the former we prove the results when, among other properties, the nonlinear term $\sigma$ is globally Lipschitz continuous in $X_\alpha$, for some $\alpha\in(0,1)$. In the other two subsections we prove the same results by assuming that $\sigma$ is only locally Lipschitz continuous in $X_\alpha$ and that $\sigma'$ is locally Lipschitz continuous from $X_\alpha$ into $X$, respectively, for some $\alpha\in(0,1)$. In these cases we need to strengthen the hypotheses on $\eta$ in order to balance the lack of regularity of $\sigma$. To simplify the computations we consider $T=1$, since the general case can be obtained with analogous arguments. We stress that, for arbitrary $T>0$, the constants which appear in the estimates also depend on $T$.

\subsection{The case when $\sigma$ is globally Lipschitz continuous in $X_\alpha$}
\label{subsect-3.1}
We stress that, even if the following set of assumptions on $\sigma$ might seem a bit artificial (since we assume $\sigma$ to be Lipschitz both in $X$ and $X_{\alpha}$ while its derivative $\sigma'$ is only assumed to be locally Lipschitz continuous).  We have two reasons to also consider this case. The former is that the proofs, although maintaining the main difficulties,  are,  in this setting,  easier and should help the reader to better understand the ideas behind the computations, the latter is that the proof of the main results under weaker hypotheses on $\sigma$ can be deduced from the computations developed in this subsection, with some slight modifications. Hence, we can see this part as an intermediate step in order to prove more general statements. 
\begin{hypotheses}
\label{ip_nonlineare}
\begin{enumerate}[\rm (i)]
\item
Hypotheses $\ref{hyp-main}$ are satisfied for every $\lambda,\zeta$ such that $0\le\zeta\le\lambda<2$.
\item
The function $x$ belongs to $C^{\eta}([0,1])$ for some $\eta\in (1/2,1)$.
\item 
The function $\sigma:X\to X$ is G\^ateaux differentiable with bounded and locally Lipschitz continuous G\^ateaux derivative $\sigma'$. We denote by ${\rm Lip}_\sigma$ the Lipschitz constant of $\sigma$ on $X$ and, for every $R>0$, by ${\rm Lip}_{\sigma'}^R$ the Lipschitz constant of $\sigma'$ in $\{z\in X:\|z\|_X\leq R\}$.
\item
There exists $\alpha\in(0,1)$ such that $\eta+\alpha>1$, the restriction of $\sigma$ to $X_{\alpha}$ maps this space into itself, and $\sigma$ is Lipschitz continuous as a map from $X_\alpha$ into itself. We denote by ${\rm Lip}_\sigma^\alpha$ the Lipschitz constant of $\sigma$ as a map from $X_\alpha$ into itself.
\end{enumerate}
\end{hypotheses}

Let us introduce the following space: for every $\alpha,\gamma>0$, we say that $f\in Y^{\alpha}_{-\gamma}(0,1)$ if
$f\in \mathscr C_{-\gamma}((0,1];X_\alpha)\cap C_b((0,1];X)$ and $\hat\delta_1 f\in \mathscr C^{\alpha}([0,1]^2_<;X_{\alpha})$. The space $Y^{\alpha}_{-\gamma}(0,1)$ is a Banach space if endowed with the norm
\begin{align*}
\|f\|_{Y^{\alpha}_{-\gamma}(0,1)}:=\|f\|_{\mathscr C_{-\gamma}((0,1];X_\alpha)}+\|f\|_{C_b((0,1];X)}+\|\hat\delta_1f\|_{\mathscr C^{\alpha}([0,1]^2_<;X_{\alpha})}
\end{align*}
for every $f\in Y^{\alpha}_{-\gamma}(0,1)$.

An analogous definition is given for $Y^{\alpha}_{-\gamma}(0,T)$ with $T\in (0,1)$

\begin{remark}
{\rm One may ask why, if $f \in Y^{\alpha}_{-\gamma}(0,1)$, then the function $t\mapsto \|f(t)\|_{X_\alpha}$ has a singularity of order $\gamma$ at $t=0$, while the function $(s,t)\mapsto \|\hat\delta_1 f(s,t)\|_{X_\alpha}$ has not a singularity at $s=0$.  The reason is the following: if  $\psi\in X_\theta$ and $\theta<\alpha$, then the map $t\mapsto \|S(t)\psi\|_{X_\alpha}$ has a singularity at $t=0$ of order $\alpha-\theta$. On the contrary,  $\hat\delta_1 (S(\cdot)\psi)=0$ for every $\psi\in X$.
Thus, if $y$ is a mild solution to \eqref{omo_equation}, then $\|y(t)\|_{X_{\alpha}}$ has a singularity at $t=0$, while $(\hat\delta_1 y)(s,t)={\mathscr I}_{S,\sigma\circ y}(s,t)$ for every $(s,t)\in [0,1]^2_<$, and so it has no singularity with respect to the ${X_\alpha}$-norm.}
\end{remark}

To begin with, we prove that the convolution integral $\mathscr I_{S(\sigma\circ y)}$ is well-defined for functions $y\in Y^{\alpha}_{\theta-\alpha}(0,1)$. 
\begin{lemma}
\label{lemma:int_conv_funz_Y_alpha_gamma}
Let Hypotheses $\ref{ip_nonlineare}$ be satisfied and fix $y\in Y^{\alpha}_{-\gamma}(0,1)$ with $\eta+\alpha>1$ and $\eta>\gamma$. Then:
\begin{enumerate}[\rm(i)]
\item 
$\mathscr I_{S,\sigma\circ y}$ is well-defined;
\item 
$\mathscr I_{S,\sigma\circ y}$ belongs to $\mathscr C^{\eta-\gamma-\varepsilon}([0,1]^2_<;X_{\alpha+\varepsilon})$ 
and
\begin{align*}
\|\mathscr I_{S,\sigma\circ y}\|_{\mathscr C^{\eta-\gamma-\varepsilon}([0,1]^2_{<};X_{\alpha+\varepsilon})}
\leq & C \|x\|_{C^{\eta}([0,1])}
(1+\|y\|_{Y^\alpha_{-\gamma}(0,1)})
\end{align*}
for every $\varepsilon\in[0,(1-\alpha)\wedge (\eta-\gamma)]$ and some positive constant $C$, which depends on   $\varepsilon$, $\eta$, $\gamma$, the constant $C_{\alpha,0,1}=C_{\alpha,0}$ in \eqref{stime_smgr}$(b)$, ${\rm Lip}_{\sigma}$, ${\rm Lip}_{\sigma}^{\alpha}$ and the norm of $\sigma(0)$ in $X_{\alpha}$.     
\end{enumerate}
\end{lemma}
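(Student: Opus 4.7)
The plan is to verify that the function $\varphi := \sigma\circ y$ satisfies the hypotheses of Definition \ref{def-pioggia}, and then invoke the estimate \eqref{stima-utilee} from Remark \ref{lemma:def_int_conv}. I will choose the parameters in that definition to be $\beta=0$ and $\rho=\alpha$. Note that, by Hypothesis \ref{ip_nonlineare}(iv), $\rho=\alpha\in (1-\eta,1)$, that $0\le \alpha-\beta = \alpha <1$, that $\rho+\eta+\beta-\alpha=\eta$, and that $\eta\wedge(\rho+\eta+\beta-\alpha)=\eta$, so the admissible range for the singularity exponent becomes $[0,\eta)$, which contains $\gamma$ by assumption. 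With these choices, $\varepsilon_0=(1+\beta-\alpha)\wedge(\eta\wedge(\rho+\eta+\beta-\alpha)-\gamma)=(1-\alpha)\wedge(\eta-\gamma)$, which matches the range for $\varepsilon$ in the statement.

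The first hypothesis to verify is $\sigma\circ y\in \mathscr C_{-\gamma}((0,1];X_\alpha)$. Continuity on $(0,1]$ follows from the continuity of $y:(0,1]\to X_\alpha$ and of $\sigma:X_\alpha\to X_\alpha$, and the Lipschitz continuity of $\sigma$ in $X_\alpha$ yields
\begin{align*}
s^\gamma\|\sigma(y(s))\|_{X_\alpha}\le {\rm Lip}_\sigma^\alpha\, s^\gamma\|y(s)\|_{X_\alpha}+s^\gamma \|\sigma(0)\|_{X_\alpha}\le {\rm Lip}_\sigma^\alpha\|y\|_{Y^\alpha_{-\gamma}(0,1)}+\|\sigma(0)\|_{X_\alpha},
\end{align*}
using $s\le 1$. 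The second, and less obvious, requirement is $\hat\delta_1(\sigma\circ y)\in \mathscr C^{\alpha}_{-\gamma}((0,1]^2_<;X)$. I will split
\begin{align*}
\sigma(y(t))-S(t-s)\sigma(y(s))=[\sigma(y(t))-\sigma(y(s))]-[(S(t-s)-I)\sigma(y(s))].
\end{align*}
For the first bracket, Lipschitz continuity of $\sigma:X\to X$ combined with the identity $y(t)-y(s)=(\hat\delta_1 y)(s,t)+(S(t-s)-I)y(s)$, the continuous embedding $X_\alpha\hookrightarrow X$, and estimate \eqref{stime_smgr}(b) with $\mu=\alpha$, $\nu=0$ yield
\begin{align*}
s^\gamma\|\sigma(y(t))-\sigma(y(s))\|_X\le {\rm Lip}_\sigma(K_{\alpha,0}s^\gamma+C_{\alpha,0})\|y\|_{Y^\alpha_{-\gamma}(0,1)}(t-s)^\alpha,
\end{align*}
where the second summand arises because $s^\gamma\|(S(t-s)-I)y(s)\|_X\le C_{\alpha,0}(t-s)^\alpha s^\gamma\|y(s)\|_{X_\alpha}\le C_{\alpha,0}(t-s)^\alpha\|y\|_{Y^\alpha_{-\gamma}(0,1)}$. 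For the second bracket, applying \eqref{stime_smgr}(b) once more gives
\begin{align*}
s^\gamma\|(S(t-s)-I)\sigma(y(s))\|_X\le C_{\alpha,0}(t-s)^\alpha s^\gamma\|\sigma(y(s))\|_{X_\alpha},
\end{align*}
which is controlled by the already established bound on $\|\sigma\circ y\|_{\mathscr C_{-\gamma}}$. Summing these two contributions delivers
\begin{align*}
\|\hat\delta_1(\sigma\circ y)\|_{\mathscr C^\alpha_{-\gamma}((0,1]^2_<;X)}\le C\bigl(1+\|y\|_{Y^\alpha_{-\gamma}(0,1)}\bigr),
\end{align*}
for a constant $C$ depending on ${\rm Lip}_\sigma,{\rm Lip}_\sigma^\alpha$, $C_{\alpha,0},K_{\alpha,0}$ and $\|\sigma(0)\|_{X_\alpha}$.

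With both hypotheses of Definition \ref{def-pioggia} verified, claim (i) follows immediately. Claim (ii) then follows by plugging the two bounds above into estimate \eqref{stima-utilee} of Remark \ref{lemma:def_int_conv}, which gives, for $\varepsilon\in[0,\varepsilon_0)$,
\begin{align*}
\|\mathscr I_{S,\sigma\circ y}\|_{\mathscr C^{\eta-\gamma-\varepsilon}([0,1]^2_<;X_{\alpha+\varepsilon})}\le C\|x\|_{C^\eta([0,1])}\bigl(1+\|y\|_{Y^\alpha_{-\gamma}(0,1)}\bigr).
\end{align*}
The main technical point will be the bookkeeping in step two: the factor $s^\gamma$ has to absorb the singular norm of $y(s)$ in $X_\alpha$ without leaving any residual singularity, and the key observation that makes this possible is that the term $(S(t-s)-I)y(s)$ can be estimated directly in $X$ at the expense of a factor $(t-s)^\alpha$, i.e., exactly the Hölder exponent the target space demands.
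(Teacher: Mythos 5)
Your proposal is correct and follows essentially the same route as the paper's own proof: both verify the hypotheses of Remark \ref{lemma:def_int_conv} for $\varphi=\sigma\circ y$ with the choices $\beta=0$, $\rho=\alpha$, bound $\|\sigma\circ y\|_{\mathscr C_{-\gamma}((0,1];X_\alpha)}$ via the Lipschitz/linear-growth property of $\sigma$ in $X_\alpha$, and estimate $\hat\delta_1(\sigma\circ y)$ by the same decomposition into $\sigma(y(t))-\sigma(y(s))$ and $\mathfrak a(s,t)\sigma(y(s))$, using $y(t)-y(s)=(\hat\delta_1y)(s,t)+\mathfrak a(s,t)y(s)$ together with \eqref{stime_smgr}(b). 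No substantive differences.
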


\begin{proof}
(i). Let us prove that the function $\sigma\circ y$ satisfies the assumptions of Remark \ref{lemma:def_int_conv} with $a=0$, $b=1$, $\gamma=\alpha-\theta$, $\rho=\alpha$ and $\beta=0$, i.e., $\sigma\circ y\in \mathscr C_{-\gamma}((0,1];X_\alpha)$ and $\hat\delta_1(\sigma\circ y)\in \mathscr C^{\alpha}_{-\gamma}((0,1]^2_<;X)$. 
The continuity of $\sigma\circ y$ and $\hat\delta_1(\sigma\circ y)$ is a consequence of the regularity of $\sigma$ and $y$. Further, from Hypothesis \ref{ip_nonlineare}(iii) it follows that 
\begin{align}
\|\sigma(z)\|_{X_{\alpha}}\le L^{\alpha}_{\sigma}(1+\|z\|_{X_{\alpha}}),\qquad\;\,z\in X_{\alpha},
\label{ip_sigma}
\end{align}
where $L^{\alpha}_{\sigma}=\max\{{\rm Lip}_{\sigma}^{\alpha},\|\sigma(0)\|_{X_{\alpha}}\}$.
Therefore, we infer that
\begin{align}
s^\gamma\|\sigma(y(s))\|_{X_\alpha}
\leq & L_{\sigma}^{\alpha}s^{\gamma}(1+\|y(s)\|_{X_\alpha})
\leq  L_{\sigma}^{\alpha}(1+\|y\|_{\mathscr C_{-\gamma}((0,1];X_\alpha)}).
\label{ip_sigma-1}
\end{align}

Let us prove the condition on $\hat\delta_{1}(\sigma\circ y)$. For this purpose, we observe that
\begin{align*}
\|(\hat\delta_1(\sigma\circ y))(s,t)\|_{X}
\leq & \|\sigma(y(t))-\sigma(y(s))\|_X+\|\mathfrak a(s,t)\sigma(y(s))\|_X  \\
\leq & {\rm Lip}_\sigma\|y(t)-y(s)\|_X+C_{\alpha,0}|t-s|^\alpha\|\sigma(y(s))\|_{X_\alpha} \\
\leq & {\rm Lip}_\sigma\|(\hat\delta_1y)(s,t)\|_{X}+{\rm Lip}_\sigma\|\mathfrak a(s,t)y(s)\|_{X} \\
& +C_{\alpha,0}L^\alpha_\sigma|t-s|^\alpha(1+\|y(s)\|_{X_\alpha}) \\
\leq & {\rm Lip}_\sigma\|(\hat\delta_1y)(s,t)\|_{X}+C_{\alpha,0}|t-s|^\alpha(L^\alpha_\sigma+({\rm Lip}_\sigma+L^\alpha_\sigma)\|y(s)\|_{X_\alpha})
\end{align*}
for every $(s,t)\in (0,1]^2_{<}$.
By recalling the definition of $Y^{\alpha}_{-\gamma}(0,1)$, we conclude that
\begin{align*}
s^\gamma\frac{\|(\hat\delta_{1}(\sigma\circ y))(s,t)\|_{X}}{|t-s|^\alpha}
\leq & C_{\alpha,0}L^\alpha_\sigma +[(C_{\alpha,0}\vee 1){\rm Lip}_\sigma+C_{\alpha,0}L^\alpha_\sigma]\|y\|_{Y^{\alpha}_{-\gamma}(0,1)}.
\end{align*}
Hence, the assumptions of Remark \ref{lemma:def_int_conv} are fulfilled by $\sigma\circ y$ and so the convolution integral $\mathscr I_{S,\sigma\circ y}$ is well-defined.

\medskip
(ii). From (i) and Remark \ref{lemma:def_int_conv},
it follows that ${\mathscr I}_{S,\sigma\circ y}\in \mathscr C^{\eta-\gamma-\varepsilon}([0,1]^2_<;X_{\alpha+\varepsilon})$ for every $\varepsilon\in[0,(1-\alpha)\wedge (\eta-\gamma))$ and estimate \eqref{stima-utilee} holds true, with $\sigma\circ y$ in place of $\varphi$. 
\end{proof}

\begin{theorem}
\label{thm:ex_mild_sol_1}
Let Hypotheses $\ref{ip_nonlineare}$ be satisfied. Then, for every $\psi\in X_\theta$  with $\alpha\geq \theta$ and $\eta>2\alpha-\theta$, there exists a unique mild solution $y$ to \eqref{omo_equation} which belongs to $Y^{\alpha}_{\theta-\alpha}(0,1)$. 
\end{theorem}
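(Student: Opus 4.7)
The plan is to run a Banach fixed-point argument for the map
\[(\Gamma y)(t):=S(t)\psi+\mathscr I_{S,\sigma\circ y}(0,t),\qquad t\in[0,T_0],\]
on a ball of the space $Y^{\alpha}_{\theta-\alpha}(0,T_0)$ for some small $T_0\in(0,1]$, and then to extend the resulting local solution to the whole of $[0,1]$ by iteration.

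\emph{Step 1 (the free term).} From Hypothesis \ref{hyp-main}(iii)(a), $\|S(t)\psi\|_{X_\alpha}\leq L_{\theta,\alpha}t^{\theta-\alpha}\|\psi\|_{X_\theta}$, so $S(\cdot)\psi\in\mathscr C_{-(\alpha-\theta)}((0,T_0];X_\alpha)$; the embedding $X_\theta\hookrightarrow X$ together with $\|S(t)\|_{\mathscr L(X)}\leq L_{0,0}$ gives boundedness in $X$, and $\hat\delta_1(S(\cdot)\psi)\equiv 0$. Hence $S(\cdot)\psi\in Y^\alpha_{\theta-\alpha}(0,T_0)$.

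\emph{Step 2 ($\Gamma$ maps the space into itself).} Set $\gamma:=\alpha-\theta\geq 0$. The assumption $\eta>2\alpha-\theta=\alpha+\gamma$ yields both $\eta>\gamma$ (so Lemma \ref{lemma:int_conv_funz_Y_alpha_gamma} is applicable, in view of $\eta+\alpha>1$) and $\eta-\gamma>\alpha$ (so the convolution integral belongs to $\mathscr C^\alpha$ in the $X_\alpha$-scale). Setting $v(t):=\mathscr I_{S,\sigma\circ y}(0,t)$ and using $\hat\delta_2 k_g\equiv 0$ from Theorem \ref{thm:sew_map_sing_int}, we obtain $(\hat\delta_1 v)(s,t)=\mathscr I_{S,\sigma\circ y}(s,t)$. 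Applying Lemma \ref{lemma:int_conv_funz_Y_alpha_gamma} with $\varepsilon=0$ and the three norms defining $Y^\alpha_{\theta-\alpha}$ leads to
\[\|\Gamma y-S(\cdot)\psi\|_{Y^\alpha_{\theta-\alpha}(0,T_0)}\leq C\|x\|_{C^\eta([0,T_0])}(1+\|y\|_{Y^\alpha_{\theta-\alpha}(0,T_0)})\,T_0^{\eta-2\alpha+\theta},\]
so that $\Gamma$ preserves a suitable closed ball of $Y^\alpha_{\theta-\alpha}(0,T_0)$ when $T_0$ is small.

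\emph{Step 3 ($\Gamma$ is a contraction).} Linearity of the convolution integral (Remark \ref{lin-int}(i)) gives $\Gamma y_1-\Gamma y_2=\mathscr I_{S,\sigma\circ y_1-\sigma\circ y_2}(0,\cdot)$. I estimate the inputs to Lemma \ref{lemma:int_conv_funz_Y_alpha_gamma}. Hypothesis \ref{ip_nonlineare}(iv) controls
\[\|\sigma(y_1(s))-\sigma(y_2(s))\|_{X_\alpha}\leq{\rm Lip}_\sigma^\alpha\|y_1(s)-y_2(s)\|_{X_\alpha},\]
while for the increment I split off the semigroup piece $\mathfrak a(s,t)(\sigma(y_1(s))-\sigma(y_2(s)))$ (bounded by Hypothesis \ref{hyp-main}(iii)(b) and the previous estimate) and write the remainder, via the fundamental theorem of calculus, as
\begin{align*}
&\int_0^1\sigma'(\xi y_1(t)+(1-\xi)y_2(t))\big[(y_1-y_2)(t)-(y_1-y_2)(s)\big]\,d\xi\\
&+\int_0^1\big[\sigma'(\xi y_1(t)+(1-\xi)y_2(t))-\sigma'(\xi y_1(s)+(1-\xi)y_2(s))\big](y_1-y_2)(s)\,d\xi,
\end{align*}
bounding the two pieces via the boundedness of $\sigma'$ and via ${\rm Lip}_{\sigma'}^R$, where $R$ controls $\|y_i\|_{C_b((0,T_0];X)}$ on the chosen ball. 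Plugging these bounds into Lemma \ref{lemma:int_conv_funz_Y_alpha_gamma} and tracking the $T_0$-powers produces
\[\|\Gamma y_1-\Gamma y_2\|_{Y^\alpha_{\theta-\alpha}(0,T_0)}\leq C(R,\|x\|_{C^\eta})\,T_0^\kappa\,\|y_1-y_2\|_{Y^\alpha_{\theta-\alpha}(0,T_0)}\]
for some $\kappa>0$. Shrinking $T_0$ so that the prefactor is $<1$ and the ball of Step 2 is invariant, Banach's fixed-point theorem delivers a unique local mild solution on $[0,T_0]$.

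\emph{Step 4 (extension and global uniqueness).} From $y\in Y^\alpha_{\theta-\alpha}(0,T_0)$ we infer $y(T_0)\in X_\alpha$. Since $\alpha+\eta>1$, the non-singular theory of \cite{GLT06,ALT} produces a unique mild solution on $[T_0,1]$ with initial datum $y(T_0)\in X_\alpha$; the splitting identity of Remark \ref{rmk:prop_int_young}(ii) ensures that the concatenation solves \eqref{omo_equation} in the mild sense on $[0,1]$, and uniqueness of each piece yields global uniqueness in $Y^\alpha_{\theta-\alpha}(0,1)$.

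\emph{Expected main obstacle.} The delicate step is Step 3, and more precisely the $\mathscr C^\alpha_{-\gamma}((0,T_0]^2_<;X)$-estimate for $\hat\delta_1(\sigma\circ y_1-\sigma\circ y_2)$: the local Lipschitz bound on $\sigma'$ has to be combined with the singular $X_\alpha$-behavior of the $y_i$'s at the origin in such a way that the weight $s^\gamma$ cancels precisely the blow-up and still leaves a positive power of $T_0$ to close the contraction.
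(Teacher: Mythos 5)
Your proposal follows essentially the same route as the paper: a Banach fixed-point argument for $\Gamma$ on a ball of $Y^{\alpha}_{\theta-\alpha}(0,T_0)$, driven by the singular convolution estimate of Remark \ref{lemma:def_int_conv} with $\gamma=\alpha-\theta$, $\rho=\alpha$, $\beta=\varepsilon=0$ (which produces the contraction factor $T_0^{\eta+\theta-2\alpha}$), followed by extension to $[0,1]$ via the non-singular theory of \cite{ALT} once $y(T_0)\in X_\alpha$. The only inessential deviations are your Taylor decomposition of $\delta_1(\sigma\circ y_1-\sigma\circ y_2)$ centred at the two times $t$ and $s$, where the paper instead freezes the base point at $s$ and varies the increment (see \eqref{isolaman}), and the fact that the global a priori bound needed to place an arbitrary mild solution inside the contraction ball (the paper's Step~2) is left implicit in your Step~2 self-mapping estimate.
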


\begin{remark}
{\rm If $\theta=\alpha$ then Theorem $\ref{thm:ex_mild_sol_1}$ coincides with \cite[Theorem 3.2]{ALT}. 
On the other side, we can also choose an initial datum $\psi$ barely belonging to $X$. The conditions  $\eta>2\alpha$ and $\alpha+\eta>1$ imply
$\eta >2/3$. In other words, we can drop any regularity requirement on the initial datum as long as we choose a slightly  more regular noise $x$ (but still not differentiable).}
\end{remark}

\begin{proof}[Proof of Theorem $\ref{thm:ex_mild_sol_1}$]
To begin with, we notice that, if $y$ is a mild solution to \eqref{omo_equation}, then a straightforward consequence of Remark \ref{rmk:prop_int_young}(ii) is that
\begin{align*}
y(t)=S(t-\tau)y(\tau)+\mathscr I_{S,\sigma\circ y}(\tau,t), \qquad\;\, (\tau,t)\in[0,1]_<^2.  
\end{align*}

Since it is rather long, we divide the proof into some steps.

{\em Step 1}. 
We prove a general estimate of the right-hand side of \eqref{reg_sol-aaa}.

Taking advantage of Lemma \ref{lemma:int_conv_funz_Y_alpha_gamma} with $\gamma=\alpha-\theta$, it is not hard to prove that for every $y\in Y^{\alpha}_{\theta-\alpha}(0,T)$ the function
$[0,T]\ni t\mapsto S(t)\psi+\mathscr I_{S,\sigma\circ y}(0,t)$
belongs to $Y^{\alpha}_{\theta-\alpha}(0,T)$ for every $T\in (0,1]$.

We introduce the operator $\Gamma:Y^{\alpha}_{\theta-\alpha}(0,T)\to Y^{\alpha}_{\theta-\alpha}(0,T)$, defined by
\begin{align*}
(\Gamma(y))(t):=S(t)\psi+\mathscr I_{S,\sigma\circ y}(0,t), \qquad\;\, t\in[0,T].
\end{align*}
We claim that
\begin{align}
\|y\|_{Y^{\alpha}_{\theta-\alpha}(0,T)} 
\leq & (L_{0,0}K_{\theta,0}+L_{\theta,\alpha})\|\psi\|_{X_\theta}\notag \\
& +\mathfrak CT^{\eta+\theta-2\alpha}\|x\|_{C^\eta([0,1])}(1+\|y\|_{Y^{\alpha}_{\theta-\alpha}(0,T)}), 
\label{stima-strautile}
\end{align}
with
\begin{align*}
\mathfrak C:=C( K_{\alpha,0}+2)(K_{\alpha,0}+C_{\alpha,0}+1)({\rm Lip}_\sigma+L_\sigma^\alpha).     
\end{align*}
Recall that the constants $K_{\alpha,\beta}, L_{\zeta,\alpha}$ and $C_{\mu,\nu}$ have been defined in Hypotheses \ref{hyp-main}.

For every $s\in(0,T]$ it holds that
\begin{align}
& \|y(s)\|_X
\leq L_{0,0}K_{\theta,0}\|\psi\|_{X_\theta}+K_{\alpha,0}\|\mathscr I_{S,\sigma\circ y}(0,s)\|_{X_\alpha}, \label{stima_a_priori_1}\\
& \|y(s)\|_{X_\alpha}\leq L_{\theta,\alpha}s^{\theta-\alpha}\|\psi\|_{X_\theta}+\|\mathscr I_{S,\sigma\circ y}(0,s)\|_{X_\alpha}. \label{stima_a_priori_2}
\end{align}
We want to apply Remark \ref{lemma:def_int_conv} with $a=0, b=T$, $\varphi=\sigma\circ y$, $\gamma=\alpha-\theta$, $\rho=\alpha$ and $\beta=\varepsilon=0$. The continuity of $\sigma\circ y$ and of $\hat\delta_1(\sigma\circ y)$ follows from the properties of $y$ and $\sigma$. From \eqref{ip_sigma} we infer that
\begin{align*}
& \|\sigma\circ y\|_{\mathscr C_{\theta-\alpha}((0, T];X_\alpha)} \leq L_\sigma^\alpha(T^{\alpha-\theta}+\|y\|_{Y^{\alpha}_{\theta-\alpha}(0,T)}).   
\end{align*}
Let us estimate $\|\hat\delta_1(\sigma\circ y)\|_{\mathscr C^{\alpha}_{\theta-\alpha}((0,T]^2_<;X)}$. For every $(s,t)\in(0,T]$ we get
\begin{align}
\|(\hat\delta_1(\sigma\circ y))(s,t)\|_X
\leq & \|\sigma(y(t))-\sigma(y(s))\|_X+\|\mathfrak a(s,t)\sigma(y(s))\|_X \notag \\
\leq & {\rm Lip}_\sigma\|y(t)-y(s)\|_X+C_{\alpha,0}|t-s|^\alpha\|\sigma(y(s))\|_{X_\alpha} \notag \\
\leq & {\rm Lip}_\sigma(\|(\hat\delta_1y)(s,t)\|_X+\|\mathfrak a(s,t)y(s)\|_X) \notag \\
& +C_{\alpha,0}L_\sigma^\alpha(1+\|y(s)\|_{X_\alpha})|t-s|^\alpha \notag \\
\leq & \Big({\rm Lip}_\sigma(K_{\alpha,0}\|\hat\delta_1y\|_{C^{\alpha}([0,1]^2_<;X_\alpha)}+C_{\alpha,0}\|y(s)\|_{X_\alpha}) \notag \\
&\;\, +C_{\alpha,0}L_\sigma^\alpha(1+\|y(s)\|_{X_\alpha})\Big)|t-s|^\alpha,
\label{stima_a_priori_per_futuro_1}
\end{align}
from which it follows that
\begin{align*}
\|\hat\delta_1(\sigma\circ y)\|_{\mathscr C^{\alpha}_{\theta-\alpha}((0,T]^2_<;X)}
\! \leq  (K_{\alpha,0}+C_{\alpha,0})({\rm Lip}_\sigma+L_\sigma^\alpha)(1+\|y\|_{Y^{\alpha}_{\theta-\alpha}(0,T)}).
\end{align*}
From Remark \ref{lemma:def_int_conv} we deduce that there exists a positive constant $C$ such that
\begin{align}
& \|\mathscr I_{S,\sigma\circ y}\|_{\mathscr C^{\eta+\theta-\alpha}([0,T]^2_<;X_\alpha)} \notag \\
\leq &  C\|x\|_{C^\eta([0,1])}\Big[L_\sigma^\alpha( 1+\|y\|_{Y^{\alpha}_{\theta-\alpha}(0,T)}) \notag \\
&\phantom{C\|x\|_{C^{\eta}([0,1])}\Big [}+(K_{\alpha,0}+C_{\alpha,0})({\rm Lip}_\sigma+L_\sigma^\alpha)(1+\|y\|_{Y^{\alpha}_{\theta-\alpha}(0,T)})\Big] \notag \\
\leq & C\|x\|_{C^\eta([0,1])}(K_{\alpha,0}+C_{\alpha,0}+1)({\rm Lip}_\sigma+L_\sigma^\alpha)(1+\|y\|_{Y^{\alpha}_{\theta-\alpha}(0,T)}).
\label{stima_a_priori_3}
\end{align}
Replacing \eqref{stima_a_priori_3} in \eqref{stima_a_priori_1} and \eqref{stima_a_priori_2} we get
\begin{align}
& \|y\|_{C_b((0,T];X)}
+\|y\|_{\mathscr C_{\theta-\alpha}((0,T];X_\alpha)} \notag \\
\leq & (L_{0,0}K_{\theta,0}+L_{\theta,\alpha})\|\psi\|_{X_\theta}\notag\\ &+C(K_{\alpha,0}+1)T^{\eta+\theta-\alpha}\|x\|_{C^\eta([0,1])}(K_{\alpha,0}+C_{\alpha,0}+1)({\rm Lip}_\sigma+L_\sigma^\alpha)(1+\|y\|_{Y^{\alpha}_{\theta-\alpha}(0,T)}),
\label{stima_a_priori_tot_1}
\end{align}
where we used the fact that $T\le 1$ and $\alpha\geq \theta$ to estimate $T^{\eta}\le T^{\eta+\theta-\alpha}$.

It remains to estimate $\|\hat\delta_1y\|_{\mathscr C^\alpha([0,T]^2_<;X_{\alpha})}$. Since $(\hat\delta_1y)(s,t)=\mathscr I_{S,\sigma\circ y}(s,t)$ for every $(s,t)\in [0,T]$ and $\eta>2\alpha-\theta$, from \eqref{stima_a_priori_3} we infer that
\begin{align}
\|\hat\delta_1y\|_{\mathscr C^\alpha([0,T]^2_<;X_{\alpha})}
\leq & T^{\eta+\theta-2\alpha} \|\hat\delta_1y\|_{C^{\eta+\theta-\alpha}([0,T]^2_<;X_{\alpha})} \notag \\
\leq & T^{\eta+\theta-2\alpha}C\|x\|_{C^\eta([0,1])}(K_{\alpha,0}+C_{\alpha,0}+1)({\rm Lip}_\sigma+L_\sigma^\alpha) \notag  \\
&\qquad \times (1+\|y\|_{Y^{\alpha}_{\theta-\alpha}(0,T)}). \label{stima_a_priori_tot_2}
\end{align}
The definition of $\|\cdot\|_{Y^{\alpha}_{\theta-\alpha}(0,T)}$, \eqref{stima_a_priori_tot_1} and \eqref{stima_a_priori_tot_2} give \eqref{stima-strautile}.

{\em Step 2}. We are now in a position to prove a global {\em a priori} estimate.

We claim that, if $y$ is a mild solution to \eqref{omo_equation}, then there exists a positive constant $\mathfrak R$, which depends on $\|\psi\|_{X_{\theta}},\alpha,\theta,x,\eta$ and $\sigma$, such that
\begin{align}
\label{stima_a_priori}    
\|y\|_{Y^{\alpha}_{\theta-\alpha}(0,1)}\leq \mathfrak R. 
\end{align}
Notice that $y\in Y^{\alpha}_{\theta-\alpha}(0,1)$ is a mild solution to \eqref{omo_equation} if and only if it is a fixed point of $\Gamma$. Thus, if $y\in Y^{\alpha}_{\theta-\alpha}(0,1)$ is a mild solution and we choose
\begin{align*}
    \overline T:=\left(\frac{1}{2\mathfrak C\|x\|_{C^\eta([0,1])}}\right)^{\frac{1}{\eta+\theta-2\alpha}}\wedge 1,
\end{align*}
then \eqref{stima-strautile} immediately implies that
\begin{align*}
\|y\|_{Y^{\alpha}_{\theta-\alpha}(0,\overline T)}
\leq 2(L_{0,0}K_{\theta,0}+L_{\theta,\alpha})\|\psi\|_{X_\theta}+1.
\end{align*}
So, we can say that there exists $\overline T\in(0,1]$ such that $\|y\|_{Y^{\alpha}_{\theta-\alpha}(0,\overline T)}\leq \mathfrak R_1$ for some $\mathfrak R_1>0$ which depends on $\|\psi\|_{X_{\theta}},\alpha,\theta,x,\eta$ and $\sigma$. 

If $\overline T=1$, then we are done, otherwise we notice that $y\in C([\overline T,1];X_\alpha)$ and $\hat\delta_1y\in \mathscr{C}^{\alpha}([\overline T,1]_<^2;X_\alpha)$. In particular, $y$ is a solution in $[\overline T,1]$ with initial datum $y(\overline T)\in X_{\alpha}$ Arguing as in Step 1 in the proof of \cite[Theorem 3.1]{ALT}, we infer that there exists $\mathfrak R_2>0$, which depends on $\|y(\overline T)\|_{X_{\alpha}},\alpha,\theta,x,\eta$ and $\sigma$, such that
\begin{align*}
\|y\|_{C([\overline T,1];X_\alpha)}+\|\hat\delta_1y\|_{\mathscr C^{\alpha}([\overline T;1]^2_<;X_{\alpha})}\leq \mathfrak R_2.  \end{align*}
Moreover, $\|y(\overline T)\|_{X_{\alpha}}$ can be estimated by $\mathfrak{R}_1$ and $\overline T$ above, so that $\mathfrak{R}_2>0$ finally depends on $\|\psi\|_{X_{\theta}},\alpha,\theta,x,\eta$ and $\sigma$.

If we join the two estimates above, then we get
\begin{align*}
& \|y(s)\|_{X}
\leq \mathfrak R_1\vee (K_{\alpha,0}\mathfrak R_2), \qquad\;\, s\in(0,1], \\
& s^{\alpha-\theta}\|y(s)\|_{X_\alpha}
 \leq \mathfrak R_1\vee \mathfrak R_2, \qquad\;\, s\in (0,1], \\
& \|(\hat\delta_1y)(s,t)\|_{X_{\alpha}}
\leq (\mathfrak R_1\vee \mathfrak R_2)|t-s|^\alpha, \qquad\;\, (s,t)\in[0,\overline T] \textrm{ or } (s,t)\in[\overline T,1].
\end{align*}
If $s\in(0,\overline T]$ and $t\in(\overline T,1]$, recalling that 
\begin{align*}
(\hat\delta_1y)(s,t)=\mathscr I_{S,\sigma\circ y}(s,t)=S(\overline T-s)\mathscr I_{S,\sigma\circ y}(s,\overline T)+\mathscr I_{S,\sigma\circ y}(\overline T,t),
\end{align*}
then we get
\begin{align*}
\|(\hat\delta_1y)(s,t)\|_{X_{\alpha}}
\leq & L_{0,0}\|(\hat\delta_1y)(s,\overline T)\|_{X_{\alpha}}+\|(\hat\delta_1y)(\overline T,t)\|_{X_{\alpha}} \\
\leq & (L_{0,0}\mathfrak R_1|\overline T-s|^\alpha+\mathfrak R_2|t-\overline T|^\alpha) \\
\leq & (L_{0,0}\mathfrak R_1+\mathfrak R_2)|t-s|^\alpha.
\end{align*}
Putting everything together we infer that there exists a positive constant $\mathfrak R\geq \mathfrak R_1$, which depends on $\|\psi\|_{X_\theta},\alpha,\theta,x,\eta$ and $\sigma$, such that \eqref{stima_a_priori} holds true.

{\em Step 3}. Let us prove that $\Gamma$ is a contraction in the closed ball $B$ of $Y^{\alpha}_{\theta-\alpha}(0,T_*)$, centered at $0$ and with radius $\mathfrak R$, 
for some positive $T_*$, where $\mathfrak R$ is the constant in \eqref{stima_a_priori}. 
As a byproduct, we infer that there exists a unique mild solution $y_1$ to \eqref{omo_equation} in $[0,T_*]$. Indeed, if $\widetilde y$ is another mild solution to \eqref{omo_equation} in $[0,T_*]$, then from \eqref{stima_a_priori} it follows that $\|\widetilde y\|_{Y^{\alpha}_{\theta-\alpha}(0,T_*)}\leq \mathfrak R$, which means that $\widetilde y\in B$ and it is a fixed point of $\Gamma$. Hence, $\widetilde y=y_1$. 

Suppose that we have proved that $\Gamma$ is a contraction in 
$B$. If $T_*=1$ then we are done, otherwise we can apply the arguments in the proof of \cite[Theorem 3.1]{ALT} in $[T_*,1]$ with initial datum $y_1(T_*)\in X_\alpha$, exploiting the extra regularity of the initial datum. It follows that there exists a unique mild solution $y_2\in C([T_*,1];X_\alpha)$ with $\hat\delta_1 y_2\in \mathscr C^\alpha([T_*,1]^2_<;X_{\alpha})$ to the problem
\begin{align*}
\left\{
\begin{array}{ll}
dy(t)=Ay(t)dt+\sigma(y(t))dx(t), &t\in(T_*,1],\\[1mm]
y(T_*)=y_1(T_*).
\end{array}
\right.    
\end{align*}
Hence, if we set
\begin{align*}
y(t):=\begin{cases}
y_1(t), & t\in[0,T_*], \\
y_2(t), & t\in[T_*,1],
\end{cases}
\end{align*}
then we obtain the unique mild solution to \eqref{omo_equation}.

So, let us show that there exists $T_*\in(0,1]$ such that $\Gamma$ is a contraction in $B$.
We begin by proving that $\Gamma$ maps $B$ into itself. For this purpose, we fix $y_1\in B$. 
From \eqref{stima-strautile}, with $\overline T$ being replaced by $T^*$, it follows that
\begin{align*}
\|\Gamma(y)\|_{Y^{\alpha}_{\theta-\alpha}(0,T_*)}
\leq & (L_{0,0}K_{\theta,0}+L_{\theta,\alpha})\|\psi\|_{X_\theta} \\
& +\mathfrak C T_*^{\eta+\theta-2\alpha}\|x\|_{C^\eta([0,1])}(1+\|y\|_{Y^{\alpha}_{\theta-\alpha}(0,T_*)}) \\
\leq & 
\frac{1}{2}\mathfrak R +\mathfrak C T_*^{\eta+\theta-2\alpha}\|x\|_{C^\eta([0,1])}(1+\mathfrak R).
\end{align*}
By choosing
\begin{align*}
T_*= \left(\frac{1}{2}\frac{\mathfrak R}{\mathfrak C\|x\|_{C^\eta([0,1])}(1+\mathfrak R) }\right)^{\frac{1}{\eta+\theta-2\alpha}}\wedge 1,    
\end{align*}
it follows that
$\Gamma(y)\in B$.

Let us now prove that $\Gamma$ is a $1/2$-contraction, provided we choose $T_*$ small enough. Note that, for every $y_1,y_2\in B$, it holds that
\begin{align*}
(\Gamma(y_1))(t)-(\Gamma(y_2))(t)
= \mathscr I_{S,\sigma\circ y_1-\sigma\circ y_2}(0,t), \qquad\;\, t\in[0,T_*],
\end{align*}
and
\begin{align*}
(\hat\delta_1(\Gamma(y_1)-\Gamma(y_2)))(s,t)
= \mathscr I_{S,\sigma\circ y_1-\sigma\circ y_2}(s,t), \qquad\;\, (s,t)\in[0,T_*]^2_<.
\end{align*}
Since for every $(s,t)\in[0,T_*]^2_<$ it holds that
\begin{align*}
    \|\mathscr I_{S,\sigma\circ y_1-\sigma\circ y_2}(s,t)\|_{X_\alpha}
    \leq |t-s|^{\eta+\theta-\alpha}\|\mathscr I_{S,\sigma\circ y_1-\sigma\circ y_2}\|_{\mathscr C^{\eta+\theta-\alpha}([0,T_*]^2_<;X_\alpha)}
\end{align*}
using Remark \ref{lemma:def_int_conv}, with $\gamma=\alpha-\theta$, $a=0$, $b=T_*$, $\rho=\alpha$, $\beta=\varepsilon=0$, we deduce that
\begin{align}
\notag
\|\mathscr I_{S,\sigma\circ y_1-\sigma\circ y_2}(s,t)\|_{X_\alpha}
\leq & C|t-s|^{\eta+\theta-\alpha}\|x\|_{C^\eta([0,1])}(\|\sigma\circ y_1-\sigma\circ y_2\|_{\mathscr{C}_{\theta-\alpha}((0,T_*];X_\alpha)} \\
& +\|\hat \delta_1(\sigma\circ y_1-\sigma\circ y_2)\|_{\mathscr C^{\alpha}_{\theta-\alpha}((0,T_*]^2_<;X)}),
\label{stima_diff_contr_1}
\end{align}
for every $(s,t)\in[0,T_*]^2_<$. Let us estimate the first term in the right-hand side of \eqref{stima_diff_contr_1}. 
For every $s\in(0,T_*]$, we get
\begin{align}
\notag
 s^{\alpha-\theta}\|\sigma(y_1(s))-\sigma(y_2(s))\|_{X_\alpha}
 \leq & {\rm Lip}^{\alpha}_\sigma s^{\alpha-\theta}\|y_1(s)-y_2(s)\|_{X_\alpha} \\
 \leq & {\rm Lip}^{\alpha}_{\sigma}\|y_1-y_2\|_{Y^{\alpha}_{\theta-\alpha}(0,T_*)}. 
\label{stima_diff_contr_2}
 \end{align}
Next, we estimate $\|\hat \delta_1(\sigma\circ y_1-\sigma\circ y_2)\|_{\mathscr C^{\alpha}_{\theta-\alpha}((0,T_*]^2_<;X)}$. For every $(s,t)\in(0,T_*]^2_<$, we can write
\begin{align*}
(\hat \delta_1(\sigma\circ y_1-\sigma\circ y_2))(s,t)
= (\delta_1(\sigma\circ y_1-\sigma \circ y_2))(s,t)-\mathfrak a(s,t)(\sigma(y_1(s))-\sigma(y_2(s))),  \end{align*}
and
\begin{align}
\notag
s^{\alpha-\theta}\|\mathfrak a(s,t)(\sigma(y_1(s))-\sigma(y_2(s)))\|_X
\leq & C_{\alpha,0}{\rm Lip}_\sigma^\alpha s^{\alpha-\theta}\|y_1(s)-y_2(s)\|_{X_\alpha}|t-s|^\alpha \\
\leq & C_{\alpha,0}{\rm Lip}_\sigma^\alpha |t-s|^\alpha\|y_1-y_2\|_{Y^{\alpha}_{\theta-\alpha}(0,T_*)}.
\label{stima_diff_contr_3}
\end{align}
Further,
\begin{align}
& (\delta_1(\sigma\circ y_1-\sigma\circ y_2))(s,t)\notag \\
= & \sigma(y_1(s)+(\delta_1y_1)(s,t))-\sigma(y_1(s))-\big(\sigma(y_2(s)+(\delta_1y_1)(s,t))-\sigma(y_2(s))\big)\notag \\
& +\sigma(y_2(s)+(\delta_1y_1)(s,t))-\sigma(y_2(s)+(\delta_1y_2)(s,t))\notag \\
= & \int_0^1[ \sigma'(y_1(s)+r(\delta_1y_1)(s,t))-\sigma'(y_2(s)+r(\delta_1y_1)(s,t))](\delta_1y_1)(s,t)dr\notag \\
& +\sigma(y_2(s)+(\delta_1y_1)(s,t))-\sigma(y_2(s)+(\delta_1y_2)(s,t)),
\label{isolaman}
\end{align}
from which it follows that
\begin{align}
& s^{\alpha-\theta}\|\delta_1(\sigma\circ y_1-\sigma\circ y_2)(s,t)\|_{X} \notag\\
\leq & {\rm Lip}_{\sigma'}^{3\mathfrak R}s^{\alpha-\theta}\|y_1(s)-y_2(s)\|_X\|(\delta_1y_1)(s,t)\|_X+{\rm Lip}_{\sigma}s^{\alpha-\theta}\|(\delta_1y_1)(s,t)-(\delta_1y_2)(s,t))\|_{X} \notag\\
\leq & {\rm Lip}_{\sigma'}^{3\mathfrak R}s^{\alpha-\theta}\|y_1(s)-y_2(s)\|_X\big(\|(\hat\delta_1y_1)(s,t)\|_{X}+\|\mathfrak a(s,t)y_1(s)\|_X \big) \notag\\
& +{\rm Lip}_{\sigma}s^{\alpha-\theta}\big(\|(\hat\delta_1(y_1-y_2))(s,t)\|_X+\|\mathfrak a(s,t)(y_1(s)-y_2(s))\|_X\big) \notag \\
\leq & {\rm Lip}_{\sigma'}^{3\mathfrak R}\|y_1-y_2\|_{Y^{\alpha}_{\theta-\alpha}(0,T_*)}\big(K_{\alpha,0}\|y_1\|_{Y^{\alpha}_{\theta-\alpha}(0,T_*)}+C_{\alpha,0}s^{\alpha-\theta}\|y_1(s)\|_{X_\alpha}\big)|t-s|^\alpha \notag \\
& +{\rm Lip}_{\sigma}\big(K_{\alpha,0}\|y_1-y_2\|_{Y^{\alpha}_{\theta-\alpha}(0,T_*)}+C_{\alpha,0}s^{\alpha-\theta}\|y_1(s)-y_2(s)\|_{X_\alpha}\big)|t-s|^\alpha \notag \\
\leq & {\rm Lip}_{\sigma'}^{3\mathfrak R}(K_{\alpha,0}+C_{\alpha,0})\|y_1\|_{Y^{\alpha}_{\theta-\alpha}(0,T_*)}\|y_1-y_2\|_{Y^{\alpha}_{\theta-\alpha}(0,T_*)}|t-s|^\alpha \notag \\
& + {\rm Lip}_{\sigma}(K_{\alpha,0}+C_{\alpha,0})\|y_1-y_2\|_{Y^{\alpha}_{\theta-\alpha}(0,T_*)}|t-s|^\alpha.
\label{stima_diff_contr_4}
\end{align}
Putting together \eqref{stima_diff_contr_1}-\eqref{stima_diff_contr_4}, we infer that
\begin{align}
\|\mathscr I_{S,\sigma\circ y_1-\sigma\circ y_2}(s,t)\|_{X_\alpha}
\leq & \widetilde C |t-s|^{\eta+\theta-\alpha}\|x\|_{C^\eta([0,1])}\|y_1-y_2\|_{Y^{\alpha}_{\theta-\alpha}(0,T_*)},
\label{AA}
\end{align}
for every $(s,t)\in[0,T_*]^2_<$ and some positive constant $\widetilde C$
which depends on $\alpha$, $\theta$, $\eta$, $\mathfrak R$ and $\sigma$.
Further,
\begin{align}
\frac{\|\mathscr I_{S,\sigma\circ y_1-\sigma\circ y_2}(s,t)\|_{X_{\alpha}}}{|t-s|^\alpha}
\leq & \frac{\|\mathscr I_{S,\sigma\circ y_1-\sigma\circ y_2}(s,t)\|_{X_{\alpha}}}{|t-s|^{\eta-\theta-\alpha}}|t-s|^{\eta+\theta-2\alpha}\notag \\
\leq & \widetilde C T_*^{\eta+\theta-2\alpha}\|x\|_{C^\eta([0,1])}\|y_1-y_2\|_{Y^{\alpha}_{\theta-\alpha}(0,T_*)}
\label{BB}
\end{align}
for every $(s,t)\in[0,T_*]^2_<$. Therefore, we get
\begin{align*}
& t^{\alpha-\theta}\|(\Gamma( y_1))(t)-(\Gamma (y_2))(t)\|_{X_\alpha}+\|(\Gamma (y_1))(t)-(\Gamma (y_2))(t)\|_X\\
&+\frac{\|(\hat\delta_1(\Gamma (y_1)-\Gamma (y_2)))(s,t)\|_{X_{\alpha}}}{|t-s|^\alpha} \\
\le &\|\mathscr I_{S,\sigma\circ y_1-\sigma\circ y_2}\|_{X_\alpha}+
\|\mathscr I_{S,\sigma\circ y_1-\sigma\circ y_2}\|_X
+\frac{\|\mathscr I_{S,\sigma\circ y_1-\sigma\circ y_2}(s,t)\|_{X_{\alpha}}}{|t-s|^\alpha} \\
\leq & \widetilde CT_*^{\eta+\theta-2\alpha}(K_{\alpha,0}+2)
\|x\|_{C^\eta([0,1])}\|y_1-y_2\|_{Y^{\alpha}_{\theta-\alpha}(0,T_*)}
\end{align*}
for every $(s,t)\in[0,T_*]^2_<$. This implies that a suitable choice of $T_*$ gives
\begin{align*}
\|\Gamma(y_1)-\Gamma(y_2)\|_{Y^{\alpha}_{\theta-\alpha}(0,T_*)}\leq \frac12\|y_1-y_2\|_{Y^{\alpha}_{\theta-\alpha}(0,T_*)},  \end{align*}
i.e., $\Gamma$ is a $\frac12$-contraction on $B$ and, therefore, it admits a unique fixed point in $B$ which we denote by $y_1$.
\end{proof}

Now, we prove some regularizing properties of the solution $y$ to \eqref{omo_equation}. 

\begin{proposition}
\label{prop:reg_mild_sol}
Let Hypotheses $\ref{ip_nonlineare}$ be satisfied. Then, for every $\psi\in X_\theta$ with $\alpha\geq \theta$ and $\eta>2\alpha-\theta$, the unique mild solution $y$ to \eqref{omo_equation} in $[0,1]$ belongs to $C((0,1];X_\rho)$ and $C^{\eta+\alpha-\rho}_{\rm loc}((0,1];X_\rho)$ for every $\rho \in[\eta+\alpha-1,\eta+\alpha)$. Finally, for every $\mu\in[0,\eta+\alpha-1)$ there exists a positive constant $\mathfrak C=\mathfrak C(\|\psi\|_{X_\theta},\alpha,\theta,x,\eta,\sigma,\mu)$ such that
\begin{align}
\label{stima_sol_mild_DA}
\|y(t)\|_{X_{1+\mu}}\leq \mathfrak C t^{\theta-1-\mu}, \qquad\;\, t\in(0,1].
\end{align}
\end{proposition}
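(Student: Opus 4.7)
The plan is to reduce matters to the non-singular regularity theory of \cite{ALT} via a time shift. By Theorem \ref{thm:ex_mild_sol_1}, $y\in Y^{\alpha}_{\theta-\alpha}(0,1)$; in particular
\begin{equation*}
\|y(\tau_0)\|_{X_\alpha}\le C\tau_0^{\theta-\alpha},\qquad \tau_0\in(0,1].
\end{equation*}
Fix $\tau_0\in(0,1)$. Since $\hat\delta_1(S(\cdot)\psi)\equiv 0$, one has $(\hat\delta_1 y)(\tau_0,t)=\mathscr{I}_{S,\sigma\circ y}(\tau_0,t)$, and combining this identity with Remark \ref{rmk:prop_int_young}(ii) yields
\begin{equation*}
y(t)=S(t-\tau_0)y(\tau_0)+\mathscr{I}_{S,\sigma\circ y}(\tau_0,t),\qquad t\in[\tau_0,1].
\end{equation*}
Hence $y|_{[\tau_0,1]}$ is a mild solution to the Young equation on $[\tau_0,1]$ with initial datum $y(\tau_0)\in X_\alpha$ and \emph{without} singularity at $\tau_0$.

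On $[\tau_0,1]$ we are therefore in the non-singular framework of \cite[Theorem 3.1]{ALT} (the special case $\theta=\alpha$ of our Hypotheses \ref{ip_nonlineare}). That theorem, together with its accompanying regularity estimates, yields, for every $\rho\in[\eta+\alpha-1,\eta+\alpha)$ and every $\mu\in[0,\eta+\alpha-1)$, both
\begin{equation*}
y\in C([\tau_0,1];X_\rho)\cap C^{\eta+\alpha-\rho}([\tau_0,1];X_\rho)
\end{equation*}
and
\begin{equation*}
\|y(t)\|_{X_{1+\mu}}\le \mathscr{K}\bigl(\|y(\tau_0)\|_{X_\alpha}\bigr)(t-\tau_0)^{\alpha-1-\mu},\qquad t\in(\tau_0,1],
\end{equation*}
where $\mathscr{K}\colon[0,+\infty)\to[0,+\infty)$ is a continuous non-decreasing function of at most polynomial growth (depending on $\alpha$, $\eta$, $\sigma$, $x$, $\mu$, but not on $\tau_0$), inherited from the contraction estimates used in the construction of the solution in \cite{ALT}.

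Since $\tau_0\in(0,1)$ is arbitrary, claims (i) and (ii) of the proposition follow at once. For claim (iii), fix $t\in(0,1]$ and apply the previous estimate with $\tau_0=t/2$, using $\|y(t/2)\|_{X_\alpha}\le C(t/2)^{\theta-\alpha}$:
\begin{equation*}
\|y(t)\|_{X_{1+\mu}}\le \mathscr{K}\bigl(C(t/2)^{\theta-\alpha}\bigr)(t/2)^{\alpha-1-\mu}.
\end{equation*}
Since $\theta\le\alpha$, the argument of $\mathscr{K}$ stays bounded on every compact subinterval of $(0,1]$ and blows up like $t^{\theta-\alpha}$ as $t\to 0^+$; coupling the polynomial growth of $\mathscr{K}$ with the factor $(t/2)^{\alpha-1-\mu}$ produces the desired bound $\|y(t)\|_{X_{1+\mu}}\le \mathfrak{C}\, t^{\theta-1-\mu}$.

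The main technical point is to justify the polynomial growth of $\mathscr{K}$ in its argument. This requires carefully tracing the contraction argument of \cite[Theorem 3.1]{ALT}, and is analogous to (but simpler than) the a priori estimates already carried out in the proof of Theorem \ref{thm:ex_mild_sol_1}, since all the constants entering the relevant fixed-point computations depend continuously (in fact polynomially) on the $X_\alpha$-norm of the initial datum.
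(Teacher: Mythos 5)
Your reduction of the qualitative statements (membership in $C((0,1];X_\rho)$ and $C^{\eta+\alpha-\rho}_{\rm loc}((0,1];X_\rho)$) to the non-singular theory of \cite{ALT} via the time shift $y(t)=S(t-\tau_0)y(\tau_0)+\mathscr I_{S,\sigma\circ y}(\tau_0,t)$ is exactly what the paper does, and that part is fine. The problem is the quantitative estimate \eqref{stima_sol_mild_DA}. You write the shifted bound as $\|y(t)\|_{X_{1+\mu}}\le \mathscr K(\|y(\tau_0)\|_{X_\alpha})(t-\tau_0)^{\alpha-1-\mu}$ with $\mathscr K$ of ``at most polynomial growth'' and claim this suffices. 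It does not: substituting $\tau_0=t/2$ and $\|y(t/2)\|_{X_\alpha}\le C(t/2)^{\theta-\alpha}$, a polynomial $\mathscr K$ of degree $d$ yields $\|y(t)\|_{X_{1+\mu}}\lesssim t^{d(\theta-\alpha)}\,t^{\alpha-1-\mu}$, which equals the claimed $t^{\theta-1-\mu}$ only when $d=1$ and is strictly worse for every $d>1$ as soon as $\theta<\alpha$ (the very regime this proposition is about). So the entire burden of the proof is to show that the constant depends \emph{affinely} on $\|y(\tau_0)\|_{X_\alpha}$, and this is precisely the step you defer to ``carefully tracing the contraction argument of \cite{ALT}''.

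That deferred step is the actual content of the paper's proof, and it is not a routine bookkeeping of constants: the authors run a two-stage bootstrap on $[\tau,1]$, first obtaining $\|y(t)\|_{X_\zeta}\le c_1(t-\tau/2)^{\alpha-\zeta}\tau^{\theta-\alpha}$ for $\zeta\in[\alpha,1)$, then feeding the case $\zeta=2\alpha-\lambda$ into an estimate of $\hat\delta_1(\sigma\circ y)$ in the intermediate space $X_\lambda$ (which produces the worse singularity $\tau^{\lambda+\theta-2\alpha}$), and finally applying \eqref{stima-utilee} with $\beta=\lambda$, $\rho=\alpha-\lambda$, $\varepsilon=1+\mu-\alpha$ and a choice of $\lambda=\lambda(\mu)\in(\mu,\eta+\alpha-1)$ tuned so that $\theta+\lambda-2\alpha\ge\theta-1-\mu$; it is at this point that the hypothesis $\eta>2\alpha-\theta$ is used. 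None of this structure (the intermediate exponent $\lambda$, the extra loss $\tau^{\lambda-\alpha}$, the role of $\eta>2\alpha-\theta$) is visible in your argument, and a naive application of the non-singular estimate with an unspecified polynomial $\mathscr K$ cannot recover the sharp exponent $\theta-1-\mu$. To repair the proof you would have to either carry out this bootstrap explicitly or prove that the constant in the non-singular $X_{1+\mu}$-estimate of \cite{ALT} is affine in the $X_\alpha$-norm of the initial datum, uniformly in the length of the interval.
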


\begin{remark}
{\rm We stress that the behaviour of the $X_{1+\mu}$-norm of $y$ in estimate \eqref{stima_sol_mild_DA} is sharp. Indeed, in the particular case when $\sigma\equiv 0$, $y(t)=S(t)\psi$ for every $t\in (0,1]$ and estimate \eqref{stima_sol_mild_DA} agrees with \eqref{stime_smgr}(a).}
\end{remark}

\begin{proof}[Proof of Proposition $\ref{prop:reg_mild_sol}$]
Let us notice that, for every $\tau\in(0,1)$, $y$ is the unique mild solution to
\begin{align*}
\left\{
\begin{array}{ll}
dv(t)=Av(t)dt+\sigma(v(t))dx(t), &t\in(\tau,1],\\[1mm]
v(\tau)=y(\tau),
\end{array}
\right.   
\end{align*}
with $y(\tau)\in X_\alpha$. From \cite[Theorem 3.1]{ALT}, it follows that $y(t)\in C((0,1];X_\rho)$ and $C^{\eta+\alpha-\rho}_{\rm loc}((0,1];X_\rho)$ for every $\rho \in[\eta+\alpha-1,\eta+\alpha)$. It remains to prove estimate \eqref{stima_sol_mild_DA}. 

Let us fix $\tau\in(0,1]$ and observe that, using Remark \ref{rmk:prop_int_young} and formula \eqref{reg_sol-aaa}, we can easily show that
\begin{align}
y(t)=S(t-\tau/2)y(\tau/2)+\mathscr I_{S,\sigma\circ y}(\tau/2,t)
\label{brasile-ghana}
\end{align}
for every $t\in[\tau,1]$, so that, using \eqref{stima-utilee}, which holds true also when $\gamma=0$ (see Remark \ref{rmk-abbruzzetti}), we obtain
\begin{align}
\|y(t)\|_{X_\zeta}
\leq & L_{\alpha,\zeta}(t-\tau/2)^{\alpha-\zeta}\|y(\tau/2)\|_{X_\alpha}+C(t-\tau/2)^{\eta+\alpha-\zeta}\|x\|_{C^\eta([0,1])} \notag \\
& \times (\|\sigma\circ y\|_{C([\tau/2,1];X_\alpha)}
+\|\hat\delta_1 (\sigma\circ y)\|_{\mathscr{C
}^\alpha([\tau/2,1]^2_<;X)}) \notag \\
\leq & 2^{\alpha-\theta}L_{\alpha,\zeta,1}\|y\|_{Y^{\alpha}_{\theta-\alpha}(0,1)}(t-\tau/2)^{\alpha-\zeta}\tau^{\theta-\alpha}
+ C(t-\tau/2)^{\eta+\alpha-\zeta}\|x\|_{C^\eta([0,1])} \notag \\
& \times (\|\sigma\circ y\|_{C([\tau/2,1];X_\alpha)}
+\|\hat\delta_1 (\sigma\circ y)\|_{\mathscr{C}^{\alpha}([\tau/2,1]^2_<;X)})
\label{stima_y_reg_DA_0}
\end{align}
for every $\zeta\in[\alpha,1)$ and $t\in [\tau,1]$.

Let us estimate the last factor in the right-hand side above: from \eqref{ip_sigma-1}, with $\gamma=\alpha-\theta$, we get
\begin{align}
\|\sigma\circ y\|_{C([\tau/2,1];X_\alpha)}\leq 2^{\alpha-\theta}L_\sigma^\alpha \tau^{\theta-\alpha}(1+\|y\|_{Y^{\alpha}_{\theta-\alpha}(0,1)}).
\label{stima_y_reg_DA_1}
\end{align}
By taking advantage of \eqref{stima_a_priori_per_futuro_1}, we easily infer that 
\begin{align}
& \|\hat\delta_1(\sigma\circ y)\|_{\mathscr{C}^{\alpha}([\tau/2,1]^2_<;X)} \notag \\
\leq &  2^{\alpha-\theta}\tau^{\theta-\alpha}(K_{\alpha,0}+C_{\alpha,0})({\rm Lip}_\sigma+L_\sigma^\alpha)(1+\|y\|_{Y^{\alpha}_{\theta-\alpha}(0,1)}). \label{stima_y_reg_DA_2}
\end{align}
By replacing \eqref{stima_y_reg_DA_1} and \eqref{stima_y_reg_DA_2} in \eqref{stima_y_reg_DA_0} we conclude that
\begin{align}
\label{stima_reg_y_DAfinale_1}
\|y(t)\|_{X_\zeta}
\leq c_1(t-\tau/2)^{\alpha-\zeta}\tau^{\theta-\alpha}, \qquad\;\, t\in[\tau,1],
\end{align}
for some positive constant $c_1$ which depends on  $\|\psi\|_{X_\theta},\alpha,\theta,x,\eta,\sigma$ and $\zeta$.

Now, we need to go beyond $\zeta<1$. To this aim, we fix $\lambda\in[0,\eta+\alpha-1)$ and we estimate $\|(\hat\delta_1(\sigma\circ y))(s,t)\|_{X_\lambda}$. Since $\eta<1$ it follows that $\lambda<\alpha$. For every $(s,t)\in[\tau,1]^2_<$ we get
\begin{align*}
\|(\hat\delta_1(\sigma\circ y))(s,t)\|_{X_\lambda}
\leq & \|\sigma(y(t))-\sigma(y(s))\|_{X_\lambda}+\|\mathfrak a(s,t)\sigma(y(s))\|_{X_\lambda} \notag\\
\leq & K_{\alpha,\lambda}{\rm Lip}_\sigma^\alpha\|y(t)-y(s)\|_{X_\alpha} +C_{\alpha,\lambda}L_{\sigma}^\alpha(1+\|y(s)\|_{X_{\alpha}})|t-s|^{\alpha-\lambda} \notag\\
\leq & K_{\alpha,\lambda}{\rm Lip}_\sigma^\alpha(\|(\hat\delta_1y)(s,t)\|_{X_\alpha}+\|\mathfrak a(s,t)y(s)\|_{X_\alpha}) \notag \\
& +C_{\alpha,\lambda}L_{\sigma}^{\alpha}\tau^{\theta-\alpha}(1+\|y\|_{Y^{\alpha}_{\theta-\alpha}(0,1)})|t-s|^{\alpha-\lambda}.
\end{align*}

To estimate the first term in the last side of the previous chain of inequalities, we observe that
\begin{align*}
&\|(\hat\delta_1y)(s,t)\|_{X_\alpha}+\|\mathfrak a(s,t)y(s)\|_{X_\alpha}\\
\leq & \|y\|_{Y^{\alpha}_{\theta-\alpha}(0,1)}|t-s|^{\alpha}+C_{2\alpha-\lambda,\alpha}\|y(s)\|_{X_{2\alpha-\lambda}}|t-s|^{\alpha-\lambda} \notag \\
\leq & |t-s|^{\alpha-\lambda}(\|y\|_{Y^{\alpha}_{\theta-\alpha}(0,1)}+c_1C_{2\alpha-\lambda,\alpha}(s-\tau/2)^{\lambda-\alpha}\tau^{\theta-\alpha} )\notag \\
\leq & (\|y\|_{Y^{\alpha}_{\theta-\alpha}(0,1)}+2^{\alpha-\lambda}c_1C_{2\alpha-\lambda,\alpha})\tau^{\lambda+\theta-2\alpha}|t-s|^{\alpha-\lambda},
\end{align*}
where we have applied \eqref{stima_reg_y_DAfinale_1} with $\zeta=2\alpha-\lambda$, so that
\begin{align}
\|(\hat\delta_1(\sigma\circ y))(s,t)\|_{X_\lambda}\le &
K_{\alpha,\lambda}{\rm Lip}^{\alpha}_{\sigma}
(\|y\|_{Y^{\alpha}_{\theta-\alpha}(0,1)}+2^{\alpha-\lambda}c_1C_{2\alpha-\lambda,\alpha})\tau^{\lambda+\theta-2\alpha}|t-s|^{\alpha-\lambda}\notag\\
&+C_{\alpha,\lambda,}L_{\sigma}^{\alpha}\tau^{\theta-\alpha}(1+\|y\|_{Y^{\alpha}_{\theta-\alpha}(0,1)})|t-s|^{\alpha-\lambda}
\label{stima_reg_delta_y_1}.
\end{align}
It follows that
\begin{align}
\label{stima_reg_delta_cap_y}
\|\hat\delta_1(\sigma\circ y)\|_{\mathscr C^{\alpha-\lambda}([\tau,1]^2_<;X_\lambda)}
\leq c_2\tau^{\lambda+\theta-2\alpha},
\end{align}
where $c_2$ is a positive constant which depends on $\|\psi\|_{X_\theta},\alpha,\theta,x,\eta,\sigma$ and $\lambda$. 

Finally, we fix $\mu\in[0,\eta+\alpha-1)$ and take
\begin{align*}
\lambda=(\eta+\alpha-1)-\frac{1}{2}[(\eta+\alpha-1-\mu)\wedge(\eta-\alpha+\mu)],
\end{align*}
which belongs to the interval $(\mu,\eta+\alpha-1)$.
Applying again
estimate \eqref{stima-utilee}, with
$\beta=\lambda$, $\rho=\alpha-\lambda$, $\gamma=0$
and $\varepsilon=1+\mu-\alpha$, we infer that $\mathscr I_{S,\sigma\circ y}\in \mathscr C^{\eta+\alpha-1-\mu}([\varepsilon,1]^2_<;X_{1+\mu})$ and
\begin{align}
\|\mathscr I_{S,\sigma\circ y}\|_{\mathscr C^{\eta+\alpha-1-\mu}([\tau,1]^2_<;X_{1+\mu})}
\leq & C(\|\sigma\circ y\|_{C([\tau,1];X_\alpha)}+\|\hat\delta_1(\sigma\circ y)\|_{\mathscr{C}^{\alpha-\lambda}([\tau,1]^2_<;X_{\lambda})}) \notag \\
\leq & C(L_\sigma^\alpha \tau^{\theta-\alpha}(1+\|y\|_{Y^{\alpha}_{\theta-\alpha}(0,1)}) +c_2\tau^{\theta+\lambda-2\alpha}) \notag \\
\leq & c_3\tau^{\theta+\lambda-2\alpha},
\label{stima_int_reg_1}
\end{align}
where $c_3$ is a positive constant which depends on $\|\psi\|_{X_\theta},\alpha,\theta,x,\eta,\sigma,\lambda$ and $\mu$. 

Taking \eqref{brasile-ghana} into account and applying \eqref{stima_int_reg_1}, with $\tau$ replaced by $\tau/2$, we deduce that
\begin{align}
\|y(t)\|_{X_{1+\mu}}    
\leq & L_{1+\mu,\alpha}|t-\tau/2|^{\alpha-1-\mu}\|y(\tau/2)\|_{X_\alpha}+2^{2\alpha-\theta-\lambda}c_3\tau^{\theta+\lambda-2\alpha}\notag\\
\leq & L_{1+\mu,\alpha}|t-\tau/2|^{\alpha-1-\mu}\tau^{\theta-\alpha}\|y\|_{Y^{\alpha}_{\theta-\alpha}(0,1)}+2^{2\alpha-\theta-\lambda}c_3\tau^{\theta+\lambda-2\alpha} \notag\\
\leq & c_4\tau^{\theta-1-\mu}
\label{macau-2}
\end{align}
for every $t\in[\tau,1]$, where $c_4$ is a positive constant which depends on $\|\psi\|_{X_\theta}$, $\alpha$, $\theta$, $x$, $\eta$, $\sigma$ and $\mu$. 
In particular, we get $\|y(t)\|_{X_{1+\mu}}\leq c_4t^{\theta-1-\mu}$ for every $t\in(0,1]$, where $c_4$ is independent of $t$,
since $\lambda>\eta+\alpha-1-(\eta-\alpha-\mu)/2$, so that
\begin{align*}
\lambda-2\alpha+1+\mu
\geq 
\frac{1}{2}(\eta-\alpha+\mu)>0,
\end{align*}
due to the condition $\eta>2\alpha-\theta$, and this yields the inequality $\theta+\lambda-2\alpha\geq \theta-1-\mu$.
\end{proof}

\begin{remark}
\label{rmk-finale-mond}
{\rm Hypotheses \ref{hyp-main} is assumed with $0\le\zeta\le\lambda< 2$, since Proposition \ref{prop:reg_mild_sol} involves only the spaces $X_{\gamma}$, with $\gamma<2$. It is easy to check that, if Hypotheses \ref{hyp-main} are satisfied for every $\lambda$ and $\zeta$ such that $0\le\zeta\le\lambda\le 1+\beta$ for some $\beta\in (0,1)$, then  Proposition  \ref{prop:reg_mild_sol} still holds true with $\rho$ and $\mu$ replaced by $\rho\wedge (1+\beta)$ and $1+\mu\wedge\beta$, respectively.}
\end{remark}

\subsection{The case when $\sigma$ is locally Lipschitz continuous in $X_\alpha$}
\label{subsect-3.2}

In this subsection, we prove that a mild solution to \eqref{omo_equation} also exists if we weaken the assumptions on $\sigma$ as long as we strengthen the hypotheses on the H\"older exponent $\eta$ of $x$. As in the previous subsection, we get global existence and uniqueness of the mild solution $y$ and we provide regularity properties of $y$.

\begin{hypotheses}
\label{hyp:non_lineare_2}
\begin{enumerate}[\rm (i)]
\item
Hypotheses $\ref{hyp-main}$
are satisfied with $0\le\zeta\le\lambda<2$; 
\item
Hypotheses
$\ref{ip_nonlineare}(i)$-$(ii)$ are satisfied;
\item
there exists $\alpha\in (0,1)$ such that $\eta+\alpha>1$ and the restriction of $\sigma$ to $X_\alpha$ maps the space into itself. Moreover, there exist positive constants $L_\sigma^\alpha$, ${\rm Lip}_\sigma^\alpha$ and $\omega$ such that 
\begin{align}
\label{loc_lip_sigma_ip}
\;\;\;\;\;\;\;\;\;\|\sigma(x)-\sigma(y)\|_{X_{\alpha}}\leq {\rm Lip}_\sigma^\alpha (1+R)^{\omega}\|x-y\|_{X_\alpha}, \qquad\;\, x,y\in \overline{B(0,R)}\subset X_\alpha,
\end{align}
for every $R>0$ and 
$\|\sigma(x)\|_{X_\alpha}\leq L_\sigma^\alpha(1+\|x\|_{X_\alpha})$ for every $x\in X_\alpha$.
\end{enumerate}
\end{hypotheses}

\begin{theorem}
\label{thm:ex_mild_sol_2}
Let Hypotheses
$\ref{hyp:non_lineare_2}$ be satisfied.
Then, for every $\psi\in X_\theta$ with $\alpha\geq \theta$ and $\eta>\alpha+(1+\omega)(\alpha-\theta)$, there exists a unique mild solution $y$ to \eqref{omo_equation} which belongs to $Y^{\alpha}_{\theta-\alpha}(0,1)$. 
\end{theorem}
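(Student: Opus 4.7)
The plan follows the scheme of Theorem \ref{thm:ex_mild_sol_1}: a Banach fixed-point argument for the operator $\Gamma(y)(t):=S(t)\psi+\mathscr I_{S,\sigma\circ y}(0,t)$ on a closed ball of $Y^\alpha_{\theta-\alpha}(0,T_*)$, followed by a continuation argument to $[0,1]$. The new feature is the growth factor $(1+R)^\omega$ in the local Lipschitz constant of $\sigma$, which, when evaluated along an element of the ball with $R$ of the order of the singular bound $R_0 s^{\theta-\alpha}$, introduces an extra singular factor $s^{\omega(\theta-\alpha)}$ in every difference estimate.

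For the self-mapping step the sublinear growth bound $\|\sigma(z)\|_{X_\alpha}\le L_\sigma^\alpha(1+\|z\|_{X_\alpha})$ from \eqref{loc_lip_sigma_ip} is linear in $\|z\|_{X_\alpha}$ and hence reproduces essentially verbatim the estimate \eqref{stima-strautile}; a suitable choice of $R_0$ depending on $\|\psi\|_{X_\theta}$ and $T_*$ small enough makes $\Gamma$ map the ball $B_{R_0}\subset Y^\alpha_{\theta-\alpha}(0,T_*)$ into itself.

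The contraction step is the heart of the argument and dictates the hypothesis on $\eta$. For $y_1,y_2\in B_{R_0}$ the local Lipschitz bound \eqref{loc_lip_sigma_ip}, evaluated at $R=R_0 s^{\theta-\alpha}$ and combined with the elementary inequality $(1+R_0 s^{\theta-\alpha})^\omega\le(1+R_0)^\omega s^{\omega(\theta-\alpha)}$ valid for $s\le 1$, shows that $\sigma\circ y_1-\sigma\circ y_2$ no longer belongs to $\mathscr C_{-(\alpha-\theta)}((0,T_*];X_\alpha)$ but to the larger space $\mathscr C_{-\gamma_*}((0,T_*];X_\alpha)$ with the enlarged weight $\gamma_*:=(1+\omega)(\alpha-\theta)$; the condition $\gamma_*<\eta$ required by \eqref{stima-utilee} is contained in the standing assumption. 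Invoking \eqref{stima-utilee} with $\gamma_*$ in place of $\alpha-\theta$ yields
\begin{align*}
\|\mathscr I_{S,\sigma\circ y_1-\sigma\circ y_2}\|_{\mathscr C^{\eta-\gamma_*}([0,T_*]^2_<;X_\alpha)}\le C(1+R_0)^\omega\|x\|_{C^\eta([0,1])}\|y_1-y_2\|_{Y^\alpha_{\theta-\alpha}(0,T_*)}.
\end{align*}
Dividing the $\hat\delta_1$-component by $(t-s)^\alpha$ produces an overall factor $T_*^{\eta-\gamma_*-\alpha}=T_*^{\eta-\alpha-(1+\omega)(\alpha-\theta)}$, which is positive precisely because of the assumption $\eta>\alpha+(1+\omega)(\alpha-\theta)$, so that $\Gamma$ is a $1/2$-contraction on $B_{R_0}$ for $T_*$ small enough. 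The unique fixed point $y\in Y^\alpha_{\theta-\alpha}(0,T_*)$ satisfies $y(T_*)\in X_\alpha$, and the extension to $[T_*,1]$ reduces to a non-singular Young equation, which can be handled either by iterating the present local scheme at the new initial time (with no weight) or by quoting \cite[Theorem 3.1]{ALT}; an a priori bound analogous to Step 2 of the proof of Theorem \ref{thm:ex_mild_sol_1} rules out blow-up in finite time.

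The main technical obstacle is the control of the $X$-valued second-order increment $\hat\delta_1(\sigma\circ y_1-\sigma\circ y_2)$ in $\mathscr C^\alpha_{-\gamma_*}((0,T_*]^2_<;X)$, since the derivative-based expansion \eqref{isolaman} used in Theorem \ref{thm:ex_mild_sol_1} is no longer at our disposal. My plan is to split
\begin{align*}
(\hat\delta_1(\sigma\circ y_i))(s,t)=(\sigma(y_i(t))-\sigma(y_i(s)))-\mathfrak a(s,t)\sigma(y_i(s)),\qquad i=1,2,
\end{align*}
to control the $\mathfrak a$-term by \eqref{stime_smgr}(b) together with \eqref{loc_lip_sigma_ip}, and to rewrite the surviving difference $(\sigma(y_1(t))-\sigma(y_2(t)))-(\sigma(y_1(s))-\sigma(y_2(s)))$ through intermediate points so that each resulting piece contains either a $y_1-y_2$ increment (multiplied by a Lipschitz factor in $X_\alpha$) or a $\hat\delta_1 y_i$ increment (which is itself $\alpha$-H\"older with values in $X_\alpha$ by the definition of $Y^\alpha_{\theta-\alpha}$), with every prefactor and every power of $s$ matching the weight $\gamma_*$. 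Executing this bookkeeping cleanly, using only the local Lipschitz hypothesis \eqref{loc_lip_sigma_ip} in $X_\alpha$, is the delicate step that must be completed in order for the contraction argument sketched above to close.
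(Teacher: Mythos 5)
Your overall strategy is the paper's: the self-mapping step goes through unchanged thanks to the linear growth bound, and the contraction step is rescued by absorbing the extra factor $s^{-\omega(\alpha-\theta)}$ coming from the local Lipschitz constant into an enlarged singular weight $\gamma_*=(1+\omega)(\alpha-\theta)$, invoking \eqref{stima-utilee} with this $\gamma_*$ and reading off the smallness factor $T_*^{\eta-\alpha-(1+\omega)(\alpha-\theta)}$, which is exactly where the hypothesis on $\eta$ enters. Up to that point your proposal matches the paper's proof.

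However, there is a genuine gap in the step you yourself flag as ``the delicate step that must be completed.'' You assert that the derivative-based expansion \eqref{isolaman} is no longer available and propose to bound the doubled difference $(\sigma(y_1(t))-\sigma(y_1(s)))-(\sigma(y_2(t))-\sigma(y_2(s)))$ in the required product form $\|y_1-y_2\|\cdot|t-s|^{\alpha}$ ``using only the local Lipschitz hypothesis \eqref{loc_lip_sigma_ip} in $X_\alpha$.'' This cannot be done: a mere (even local) Lipschitz condition on $\sigma$ only yields bounds by $\|y_i(t)-y_i(s)\|$ or by $\|y_1(\cdot)-y_2(\cdot)\|$ separately, never their product, and it is standard in Young theory that local Lipschitz continuity of $y\mapsto\sigma\circ y$ at the level of increments requires a (locally Lipschitz) derivative of $\sigma$. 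The paper does not face this obstacle because Hypotheses \ref{hyp:non_lineare_2} retain the $X$-level G\^ateaux differentiability of $\sigma$ with bounded, locally Lipschitz derivative from Hypotheses \ref{ip_nonlineare}; only the \emph{global} Lipschitz continuity of $\sigma$ in $X_\alpha$ is weakened to the local bound \eqref{loc_lip_sigma_ip}. Consequently the paper reuses estimate \eqref{stima_diff_contr_4} verbatim --- explicitly noting that it ``does not rely on the Lipschitzianity of $\sigma$ in $X_\alpha$'' --- and only the two estimates \eqref{stima_diff_contr_2} and \eqref{stima_diff_contr_3}, which do use the $X_\alpha$-Lipschitz constant, need the enlarged weight $\gamma_*$. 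To close your argument you should keep the expansion \eqref{isolaman} for the $\delta_1$-part (it is legitimately available) and confine the new weight $\gamma_*$ to the terms where $\sigma(y_1(s))-\sigma(y_2(s))$ is measured in $X_\alpha$; as written, your alternative route does not close.
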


\begin{proof}[ Proof of Theorem $\ref{thm:ex_mild_sol_2}$]
Let us notice that, under Hypotheses \ref{hyp:non_lineare_2}, the estimates in the proof of Theorem \ref{thm:ex_mild_sol_1} which fail are \eqref{stima_diff_contr_2} and \eqref{stima_diff_contr_3}. Indeed, if $y_1,y_2$ belong to the ball $B\subset Y^{\alpha}_{\theta-\alpha}(0,T_*)$ with radius $\mathfrak R$, then $\|y_1(s)\|_{X_\alpha},\|y_2(s)\|_{X_\alpha}\leq s^{\theta-\alpha}\mathfrak R$ for every $s\in(0,T_*]$. From \eqref{loc_lip_sigma_ip} we get
\begin{align}
s^{\alpha-\theta}\|\sigma(y_1(s))-\sigma(y_2(s))\|_{X_\alpha}
\leq & {\rm Lip}_\sigma^\alpha (1+\mathfrak R)^\omega s^{-\omega(\alpha-\theta)} \|y_1-y_2\|_{Y^{\alpha}_{\theta-\alpha}(0,T_*)}
\label{stima_non_funz_loc_lips}
\end{align}
and, if $\omega>0$ and we take the supremum of $s$ over $(0,T_*)$, then the right-hand side of \eqref{stima_non_funz_loc_lips}  blows up. To overcome this problem, we stress that under Hypotheses \ref{hyp:non_lineare_2} we are able to apply Remark \ref{lemma:def_int_conv}, with $a=0$, $b=T_*$, $\varphi=\sigma\circ y_1-\sigma\circ y_2$, $\gamma=(1+\omega)(\alpha-\theta)$, $\rho=\alpha$ and $\beta=\varepsilon=0$. In this case, for every $t\in[0,T_*]$, instead of \eqref{stima_diff_contr_1}  we get
\begin{align}
\|\mathscr I_{S,\sigma\circ y_1-\sigma\circ y_2}(s,t)\|_{X_\alpha}
\leq & C|t-s|^{\eta-(1+\omega)(\alpha-\theta)}\|x\|_{C^\eta([0,1])}\notag \\
&\quad \times (\|\sigma\circ y_1-\sigma\circ y_2\|_{\mathscr C_{-(1+\omega)(\alpha-\theta)}((0,T_*];X_\alpha)}\notag \\
&\qquad\quad +\|\hat \delta_1(\sigma\circ y_1-\sigma\circ y_2)\|_{\mathscr C^{\alpha}_{-(1+\omega)(\alpha-\theta)}((0,T_*]^2_<;X)})
\label{macau}
\end{align}
for every $(s,t)\in [0,~T]^2_{<}$. Estimate \eqref{stima_non_funz_loc_lips} shows that $\sigma\circ y_1-\sigma\circ y_2$ belongs to $\mathscr{C}_{-(1+\omega)(\alpha-\beta)((0,T_*];X_{\alpha})}$ and
\begin{align}
\|\sigma\circ y_1-\sigma\circ y_2\|_{\mathscr{C}_{-(1+\omega)(\alpha-\beta)}((0,T_*];X_\alpha)}
 \leq & {\rm Lip}_\sigma^\alpha (1+\mathfrak R)^\omega \|y_1-y_2\|_{Y^{\alpha}_{\theta-\alpha}(0,T_*)}, 
 \label{stima_diff_lambda_1}
\end{align}
from which it follows immediately that
\begin{align}
& s^{(1+\omega)(\alpha-\theta)}
\|\mathfrak a(s,t)(\sigma(y_1(s))-\sigma(y_2(s))\|_{X} \notag \\
\leq &C_{\alpha,0}{\rm Lip}_{\sigma}^\alpha(1+\mathfrak R)^\omega|t-s|^\alpha\|y_1-y_2\|_{Y^{\alpha}_{\theta-\alpha}(0,T_*)}.
 \label{stima_diff_lambda_2}
\end{align}
Moreover, we can apply  \eqref{stima_diff_contr_4} (which does not rely on the Lipschitzianity of $\sigma$ in $X_\alpha$), since
$s^{(1+\omega)(\alpha-\theta)}\|(\delta_1(\sigma\circ y_1-\sigma\circ y_2))(s,t)\|_{X}$
and from \eqref{stima_diff_lambda_2} we infer that $\hat\delta_1(\sigma\circ y_1-\sigma\circ y_2)\in\mathscr{C}^{\alpha}_{-(1+\omega)(\alpha-\theta)}((0,T_*]^2_{<};X)$ and
\begin{align}
&\|\hat\delta_1(\sigma\circ y_1-\sigma\circ y_2)\|_{\mathscr{C}^{\alpha}_{-(1+\omega)(\alpha-\theta)}((0,T_*]^2_{<};X)}\notag\\
\le &[C_{\alpha,0}
{\rm Lip}_{\sigma}^\alpha(1+\mathfrak R)^\omega
+(K_{\alpha,0}+C_{\alpha,0})({\rm Lip}_{\sigma'}^{3\mathfrak R}\mathfrak{R}+ {\rm Lip}_{\sigma})]\|y_1-y_2\|_{Y^{\alpha}_{\theta-\alpha}(0,T_*)}.
\label{macau-1}
\end{align}

Replacing estimates \eqref{stima_diff_lambda_1} and \eqref{macau-1} into \eqref{macau}, we conclude that
\begin{align*}
&\|\mathscr I_{S,\sigma\circ y_1-\sigma\circ y_2,}(0,t)\|_{X_\alpha}
\leq \overline C T_*^{\eta-(1+\omega)(\alpha-\theta)}\|x\|_{C^\eta([0,1])}\|y_1-y_2\|_{Y^{\alpha}_{\theta-\alpha}(0,T_*)}, \\[1mm]
&\frac{\|\mathscr I_{S,\sigma\circ y_1-\sigma\circ y_2}(s,t)\|_{X_{\alpha}}}{|t-s|^\alpha}
\leq \overline C T_*^{\eta-(1+\omega)(\alpha-\theta)-\alpha}\|x\|_{C^\eta([0,1])}\|y_1-y_2\|_{Y^{\alpha}_{\theta-\alpha}(0,T_*)}
\end{align*}
for every $t\in[0,T_*]$ and $(s,t)\in[0,T_*]^2_{<}$, respectively. Here, $\overline C$ is a positive constant which depends on $\alpha,\theta,x$, $\eta$, $\mathfrak R$, $\sigma$ and $\omega$. Using these estimates, which replace \eqref{AA} and \eqref{BB}, and arguing as in the proof of Theorem \ref{thm:ex_mild_sol_1}, we can complete the proof.
\end{proof}

Under the same assumptions of Theorem \ref{thm:ex_mild_sol_2} we show that the mild solution $y$ is indeed more regular.
\begin{proposition}
\label{prop:reg_mild_sol_2}
Let Hypotheses $\ref{hyp:non_lineare_2}$ be satisfied. Then, for every $\psi\in X_\theta$ with $\alpha\geq \theta$ and $\eta>\alpha+(1+\omega)(\alpha-\theta)$, the unique mild solution $y$ to \eqref{omo_equation} in $[0,1]$ belongs to $C((0,1];X_\rho)$ and $C_{\rm loc}^{\eta+\alpha-\rho}((0,1];X_\rho)$ for every $\rho \in[\eta+\alpha-1,\eta+\alpha)$. Finally, for every $\mu\in[0,\eta+\alpha-1)$ there exists a positive constant $\mathfrak C=\mathfrak C(\|\psi\|_{X_\theta},\alpha,\theta,x,\eta,\sigma,\mu,\omega)$ such that
\eqref{stima_sol_mild_DA} holds true.
\end{proposition}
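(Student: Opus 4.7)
The plan is to follow the proof of Proposition \ref{prop:reg_mild_sol} essentially line by line, adapting the places where the global Lipschitz continuity of $\sigma$ on $X_\alpha$ was used so as to employ the local Lipschitz condition \eqref{loc_lip_sigma_ip} instead. The ingredient that makes this adaptation possible is the a priori bound $\|y(s)\|_{X_\alpha}\le \mathfrak{R}s^{\theta-\alpha}$ which, by Theorem \ref{thm:ex_mild_sol_2}, comes with the membership $y\in Y^{\alpha}_{\theta-\alpha}(0,1)$.

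For the continuity and H\"older regularity statements, I would fix $\tau\in(0,1)$, observe that $y(\tau)\in X_{\alpha}$, and view $y|_{[\tau,1]}$ as the unique mild solution on $[\tau,1]$ with initial datum $y(\tau)$. Since $y$ takes values in a bounded subset of $X_\alpha$ on $[\tau,1]$ (of radius at most $\mathfrak{R}\tau^{\theta-\alpha}$), a truncation argument replaces $\sigma$ with a function $\widetilde{\sigma}$ that is globally Lipschitz on $X_\alpha$ and agrees with $\sigma$ on a ball containing the range of $y|_{[\tau,1]}$. This reduces matters exactly to the framework of \cite[Theorem 3.1]{ALT}, which yields the claims $y\in C((0,1];X_\rho)\cap C^{\eta+\alpha-\rho}_{\rm loc}((0,1];X_\rho)$ for $\rho\in[\eta+\alpha-1,\eta+\alpha)$.

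For the estimate \eqref{stima_sol_mild_DA}, I would repeat the chain \eqref{stima_reg_delta_y_1}--\eqref{stima_reg_delta_cap_y}--\eqref{stima_int_reg_1}--\eqref{macau-2} from the proof of Proposition \ref{prop:reg_mild_sol}. The only modification appears in \eqref{stima_reg_delta_y_1}, where the estimate of $\|\sigma(y(t))-\sigma(y(s))\|_{X_\lambda}$ now produces the local Lipschitz constant ${\rm Lip}_{\sigma}^{\alpha}(1+\mathfrak{R}\tau^{\theta-\alpha})^{\omega}$ in place of ${\rm Lip}_{\sigma}^{\alpha}$, hence an extra factor of order $\tau^{-\omega(\alpha-\theta)}$ for small $\tau$. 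Propagating this through the chain, \eqref{stima_reg_delta_cap_y} becomes $\|\hat\delta_1(\sigma\circ y)\|_{\mathscr{C}^{\alpha-\lambda}([\tau,1]^2_<;X_\lambda)}\le c_2\tau^{\lambda+\theta-2\alpha-\omega(\alpha-\theta)}$, and \eqref{stima_int_reg_1} analogously becomes $\|\mathscr{I}_{S,\sigma\circ y}\|_{\mathscr{C}^{\eta+\alpha-1-\mu}([\tau,1]^2_<;X_{1+\mu})}\le c_3\tau^{\theta+\lambda-2\alpha-\omega(\alpha-\theta)}$.

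Substituting the latter into the counterpart of \eqref{macau-2}, the bound on $\|y(t)\|_{X_{1+\mu}}$ for $t\in[\tau,1]$ becomes the maximum between the semigroup contribution of order $\tau^{\theta-1-\mu}$ and the term $c_3\tau^{\theta+\lambda-2\alpha-\omega(\alpha-\theta)}$. To recover the desired exponent $\tau^{\theta-1-\mu}$, one needs $\lambda\ge 2\alpha-1-\mu+\omega(\alpha-\theta)$, which is compatible with $\lambda<\eta+\alpha-1$ precisely because the reinforced hypothesis $\eta>\alpha+(1+\omega)(\alpha-\theta)$ ensures $2\alpha-1-\mu+\omega(\alpha-\theta)<\eta+\alpha-1$ for every $\mu\in[0,\eta+\alpha-1)$ (with room to spare, the spare coming from the additional $(\alpha-\theta)$ in the hypothesis which was used to guarantee existence in Theorem \ref{thm:ex_mild_sol_2}). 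The main obstacle will be precisely this bookkeeping: tracking the extra penalty $\tau^{-\omega(\alpha-\theta)}$ arising from the local Lipschitz constant at each step of the chain, and verifying that the reinforced hypothesis on $\eta$ leaves enough margin in the admissible range of $\lambda$ to absorb it without spoiling the final exponent.
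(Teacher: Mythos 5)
Your proposal is correct and follows essentially the same route as the paper: the paper's proof likewise repeats the argument of Proposition \ref{prop:reg_mild_sol}, replaces ${\rm Lip}_\sigma^\alpha$ in \eqref{stima_reg_delta_y_1} by ${\rm Lip}_\sigma^\alpha\|y\|_{Y^\alpha_{\theta-\alpha}(0,1)}^\omega\tau^{\omega(\theta-\alpha)}$ so that \eqref{stima_reg_delta_cap_y} acquires the exponent $\lambda+\theta-2\alpha-\omega(\alpha-\theta)$, and then rechooses $\lambda$ (explicitly, $\lambda=\eta+\alpha-1-[\tfrac12(\eta+\alpha-1-\mu)]\wedge(\eta-\alpha-\omega(\alpha-\theta)+\mu)$) so that the reinforced hypothesis on $\eta$ absorbs the extra penalty, exactly as in your bookkeeping. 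Your truncation remark for the qualitative regularity statement is a reasonable way to make precise the reduction to \cite[Theorem 3.1]{ALT} that the paper leaves implicit.
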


\begin{proof}
The proof can be obtained arguing as in the proof of Proposition \ref{prop:reg_mild_sol}, with the unique difference that, due to the fact that $\sigma$ is only locally Lipschitz continuous on $X_\alpha$, in estimate
\eqref{stima_reg_delta_y_1} we need to replace
${\rm Lip}_{\sigma}^{\alpha}$ with ${\rm Lip}_{\sigma}^\alpha\|y\|_{Y^\alpha_{\theta-\alpha}(0,1)}^\omega \tau^{\omega(\theta-\alpha)}$, so that
\eqref{stima_reg_delta_cap_y} becomes $\|\hat\delta_1(\sigma\circ y)\|_{\mathscr C^{\alpha-\lambda}([\tau,1];^2_<;X_\lambda)}\leq c_1\tau^{\lambda+\theta-2\alpha-\omega(\alpha-\theta)}$, with $\lambda\in[0,\eta+\alpha-1)$.
Here, $c_1$ is a positive constant
which depends on $\|\psi\|_{X_\theta}$, $\alpha$, $\theta$, $x$, $\eta$, $\sigma$ and $\lambda$.

Now, we fix $\mu\in [0,\eta+\alpha-1)$ and argue as in \eqref{macau-2} to infer that
\begin{align*}
\|y(t)\|_{X_{1+\mu}}    
\leq & 2^{\mu+1-\alpha}L_{1+\mu,\alpha}\tau^{\theta-\mu-1}\|y\|_{Y^{\alpha}_{\theta-\alpha}(0,1)}\\
& +2^{2\alpha+\omega(\alpha-\theta)-\theta-\lambda}c_2\tau^{\theta+\lambda-2\alpha-\omega(\alpha-\theta)} 
\end{align*}
for every $t\in[\tau,1]$, where $c_2$ is a positive constant which depends on $\mu$ and the same parameters as $c_1$.

Finally, we choose a suitable $\lambda$ such that $\tau^{\theta-\mu-1}\ge\tau^{\theta+\lambda-2\alpha-\omega(\alpha-\theta)}$ for every $\tau\in (0,1)$. For instance, we can take  
\begin{align*}
\lambda=\eta+\alpha-1-\bigg [\frac12(\eta+\alpha-1-\mu)\bigg ]\wedge(\eta-\alpha-\omega(\alpha-\theta)+\mu),    
\end{align*}
which belongs to the interval $(\mu,\eta+\alpha-1)$. The proof is complete.
\end{proof}

\begin{remark}
{\rm Clearly, Remark \ref{rmk-finale-mond} can be applied also to the results of this subsection.}
\end{remark}

\begin{example}
\label{example-3.14}
{\rm Let $\mathcal{A}$ be the second-order elliptic operator on $\mathbb R^d$ defined by
\begin{eqnarray*}
{\mathcal A}=\sum_{i,j=1}^dq_{ij}D_{ij}+\sum_{j=1}^db_jD_j+c.
\end{eqnarray*}
We assume that the coefficients of operator $\mathcal A$ are bounded and $\beta$-H\"older continuous in $\mathbb R^d$, for some $\beta\in (0,1)$, and $\sum_{i,j=1}^dq_{ij}(x)\xi_i\xi_j\ge\mu |\xi|^2$
for every $x,\xi\in\R^d$ and some positive constant $\mu$. 

Let $A$ be the realization of $\mathcal A$ in $X=C_b(\R^d)$ with maximal domain 
\begin{eqnarray*}
D(A)=\Big\{u\in C_b(\R^d)\cap\bigcap_{p<\infty}W^{2,p}_{\rm loc}(\R^d): {\mathcal A} u\in C_b(\R^d)\Big\}.
\end{eqnarray*}
For every $\lambda\in (0,1+\beta/2)\setminus\{1/2,1\}$, we take $X_{\lambda}=C^{2\lambda}_b(\R^d)$ endowed with its classical norm.
Moreover, we take as $X_{1/2}$ the Zygmund space of all bounded functions $g:\R^d\to\R$ such that $[g]_{X_{1/2}}=\displaystyle\sup_{x\neq y}\frac{|g(x)+g(y)-2g(2^{-1}(x+y))|}{|x-y|}<\infty$ endowed with the norm $\|g\|_{X_{1/2}}=\|g\|_{\infty}+[g]_{X_{1/2}}$. Finally, we set $X_1=D(A)$.

The operator $A$ generates an analytic semigroup on $C_b(\R^d)$. Further, for every $\lambda\in (0,1+\beta/2)$, $X_{\lambda}$ is the interpolation space of indexes $\lambda$ and $\infty$ between $X$ and $D(A)$. Hence, Hypotheses \ref{hyp:non_lineare_2} is satisfied for every $0\le\zeta\le\lambda\le 1+\beta$. We refer the reader to e.g., \cite[Chapters 3 and 14]{lor-rha}.
Finally, we fix a function $\hat\sigma\in C^2_b(\R)$ and note that the function $\sigma:X\to X$, defined by $\sigma(f)=\hat\sigma\circ f$ for every $f\in X$, satisfies Hypotheses \ref{hyp:non_lineare_2} for every $\alpha\in (0,1/2)$ with $\omega =1$.

Therefore, the assumptions of Theorem \ref{thm:ex_mild_sol_1} and Proposition \ref{prop:reg_mild_sol} are satisfied and we conclude that, for every $\psi\in C^{\theta}_b(\R^d)$ with $\theta\geq 0$ and $\theta\in ((3\alpha-\eta)/2,\alpha]$, there exists a unique mild solution $y$ to problem \eqref{omo_equation} which takes values in $D(A)$. In particular, if $3\alpha<1$ and $\eta>3\alpha$, then we get existence and uniqueness of the smooth mild solution $y$ for every $\psi\in X$. We notice that, since we are assuming $\eta+\alpha>1$, the inequality $\eta>3\alpha$ implies $\eta>3/4$.

Similarly, if $\alpha\in (1/2,1)$ and $\hat\sigma\in C^3_b(\R)$ then the function $\sigma$ satisfies Hypotheses \ref{hyp:non_lineare_2} with $\omega=2$.}
\end{example}

\subsection{The case when $\sigma'$ is locally Lipschitz continuous from $X_\alpha$ to $X$}
\label{subsect-3.3}

We conclude this section by proving that we can further weaken the assumptions on $\sigma$ allowing $\sigma'$ to be locally Lipschitz continuous from the smaller space $X_\alpha$ to $X$.
\begin{hypotheses}
\label{hyp:non_lineare_3}
\begin{enumerate}[\rm (i)]
\item
Hypotheses $\ref{hyp-main}$
are satisfied, with $0\le\zeta\le\lambda<2$.
\item
The function $x$ belongs to $C^{\eta}([0,1])$ for some $\eta\in (1/2,1)$.
\item 
The function $\sigma:X\to X$ is G\^ateaux differentiable with bounded G\^ateaux derivative $\sigma'$. 
\item
There exists $\alpha\in(0,1)$ such that $\eta+\alpha>1$, the restriction of $\sigma$ to $X_\alpha$ maps the space into itself, and there exist positive constants ${\rm Lip}_\sigma^\alpha$, ${\rm Lip}_{\sigma'}^\alpha$ and $\omega\geq 1$ such that 
\begin{align*}
& \|\sigma(x)-\sigma(y)\|_{X_{\alpha}}\leq {\rm Lip}_\sigma^\alpha (1+R)^{\omega}\|x-y\|_{X_\alpha},
\notag \\
& \|(\sigma'(x))(h)- (\sigma'(y))(h)\|_{X}\leq {\rm Lip}_{\sigma'}^\alpha(1+R)^{\omega-1}\|x-y\|_{X_\alpha}\|h\|_X
\end{align*}
for every $x,y\in X_\alpha$, with $\|x\|_{X_\alpha}$ and $\|y\|_{X_\alpha}\leq R$, every $R>0$ and $h\in X$.
Moreover,
$\|\sigma(x)\|_{X_\alpha}\leq L_\sigma^\alpha(1+\|x\|_{X_\alpha})$ for every $x\in X_{\alpha}$ and some positive constant $L_\sigma^\alpha$.
\end{enumerate}
\end{hypotheses}

\begin{theorem}
\label{thm:ex_mild_sol_3}
Under Hypotheses $\ref{hyp:non_lineare_3}$ the statements of Theorem $\ref{thm:ex_mild_sol_2}$ and Proposition $\ref{prop:reg_mild_sol_2}$ hold true with the same choice of all the parameters.
\end{theorem}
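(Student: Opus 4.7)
The plan is to mimic the fixed-point argument used in the proofs of Theorem \ref{thm:ex_mild_sol_2} and Proposition \ref{prop:reg_mild_sol_2}. On a suitable closed ball $B$ of radius $\mathfrak R$ in $Y^{\alpha}_{\theta-\alpha}(0,T_*)$ I would define $\Gamma(y)(t):=S(t)\psi+\mathscr I_{S,\sigma\circ y}(0,t)$ and aim to show that $\Gamma$ is a $1/2$-contraction for $T_*$ small enough. All the estimates that control $\|\Gamma(y)\|_{Y^{\alpha}_{\theta-\alpha}(0,T_*)}$, the a priori bound \eqref{stima_a_priori} and the self-map property $\Gamma(B)\subset B$ depend only on the linear growth of $\|\sigma(\cdot)\|_{X_\alpha}$, on the local Lipschitzianity of $\sigma:X_\alpha\to X_\alpha$ guaranteed by Hypothesis \ref{hyp:non_lineare_3}(iv), and on the global Lipschitzianity of $\sigma:X\to X$ (which follows from the boundedness of $\sigma'$ on $X$). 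These properties all survive under Hypotheses \ref{hyp:non_lineare_3}, so the analogues of \eqref{stima-strautile} and \eqref{stima_a_priori} transfer verbatim.

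The only step that requires a genuine modification is the bound for $\hat\delta_1(\sigma\circ y_1-\sigma\circ y_2)$ in $\mathscr C^{\alpha}_{-(1+\omega)(\alpha-\theta)}((0,T_*]^2_<;X)$, where the proof of Theorem \ref{thm:ex_mild_sol_2} invoked local Lipschitzianity of $\sigma'$ on $X$. I would keep the decomposition \eqref{isolaman}: the second summand $\sigma(y_2(s)+\delta_1 y_1(s,t))-\sigma(y_2(s)+\delta_1 y_2(s,t))$ is handled by the global Lipschitz continuity of $\sigma$ on $X$ exactly as in \eqref{stima_diff_contr_4}. For the first summand, since $y_1,y_2\in B$ all four arguments $y_j(s)+r\delta_1 y_1(s,t)$ have $X_\alpha$-norm bounded by $c\mathfrak R s^{\theta-\alpha}$, and Hypothesis \ref{hyp:non_lineare_3}(iv) then yields
\begin{align*}
\bigg\|\int_0^1[\cdots]\,dr\bigg\|_X\leq {\rm Lip}_{\sigma'}^\alpha(1+c\mathfrak R s^{\theta-\alpha})^{\omega-1}\|y_1(s)-y_2(s)\|_{X_\alpha}\|\delta_1 y_1(s,t)\|_X.
\end{align*}
Using $\omega\geq 1$, $\theta\leq\alpha$ and the elementary bounds $\|y_1(s)-y_2(s)\|_{X_\alpha}\leq s^{\theta-\alpha}\|y_1-y_2\|_{Y^{\alpha}_{\theta-\alpha}(0,T_*)}$ and $\|\delta_1 y_1(s,t)\|_X\leq c\mathfrak R s^{\theta-\alpha}|t-s|^\alpha$, this produces a singularity of order exactly $(1+\omega)(\alpha-\theta)$ at $s=0$, matching the exponent $\gamma=(1+\omega)(\alpha-\theta)$ required to apply Remark \ref{lemma:def_int_conv} as in the proof of Theorem \ref{thm:ex_mild_sol_2}. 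The rest of the argument---producing the analogues of \eqref{AA} and \eqref{BB}, extracting the small factor $T_*^{\eta-\alpha-(1+\omega)(\alpha-\theta)}$ (positive precisely under the assumption $\eta>\alpha+(1+\omega)(\alpha-\theta)$) and closing the contraction---proceeds unchanged. The global extension to $[0,1]$ uses $y(T_*)\in X_\alpha$ together with \cite[Theorem 3.1]{ALT}.

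For the regularity assertion, the proof of Proposition \ref{prop:reg_mild_sol_2} only uses the local Lipschitzianity of $\sigma:X_\alpha\to X_\alpha$ and the growth bound on $\|\sigma(\cdot)\|_{X_\alpha}$; no Lipschitz property of $\sigma'$ is ever invoked. Consequently the same chain of inequalities, with the same choice of $\lambda\in(\mu,\eta+\alpha-1)$, applies under Hypotheses \ref{hyp:non_lineare_3} and yields both $y\in C((0,1];X_\rho)\cap C^{\eta+\alpha-\rho}_{\rm loc}((0,1];X_\rho)$ for $\rho\in[\eta+\alpha-1,\eta+\alpha)$ and estimate \eqref{stima_sol_mild_DA}. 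The principal difficulty in the whole argument is the weight bookkeeping: verifying that the $(\omega-1)$-th power of $(1+R)\leq c(1+s^{\theta-\alpha})$ combines with the additional factor $s^{\theta-\alpha}$ coming from $\|y_1(s)-y_2(s)\|_{X_\alpha}$ and the factor coming from $\|\delta_1 y_1(s,t)\|_X$ to reproduce precisely the singular exponent $(1+\omega)(\alpha-\theta)$, and no worse; this compatibility with $\eta>\alpha+(1+\omega)(\alpha-\theta)$ is exactly what allows the singular convolution integral of Remark \ref{lemma:def_int_conv} to remain well-defined throughout.
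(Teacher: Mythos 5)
Your proposal is correct and follows essentially the same route as the paper: the only genuinely new estimate is the bound on $\hat\delta_1(\sigma\circ y_1-\sigma\circ y_2)$ in $\mathscr C^{\alpha}_{-(1+\omega)(\alpha-\theta)}((0,T_*]^2_<;X)$, obtained from the decomposition \eqref{isolaman} by handling the integral term via the local Lipschitz continuity of $\sigma'$ from $X_\alpha$ to $X$ and absorbing the factor $(1+c\mathfrak R s^{\theta-\alpha})^{\omega-1}$ together with the two weights $s^{\theta-\alpha}$ (from $\|y_1(s)-y_2(s)\|_{X_\alpha}$ and $\|(\delta_1 y_1)(s,t)\|_X$) into the singular exponent $(1+\omega)(\alpha-\theta)$, exactly as the paper does. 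Your observation that the regularity statement of Proposition \ref{prop:reg_mild_sol_2} never invokes any Lipschitz property of $\sigma'$ likewise matches the paper's remark that only the existence--uniqueness part requires modification.
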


\begin{proof}
The only differences between this proof and the one of the quoted Theorem and Proposition is in the existence and uniqueness part of the statement. More precisely, we need to slightly modify the arguments used to prove the crucial estimate \eqref{macau-1} that is:
$$
\|\mathscr{I}_{S,\sigma\circ y_1-\sigma\circ y_2}(s,t)\|_{X_{\alpha}}\le \widetilde C|t-s|^{\eta-(1+\omega)(\alpha-\theta)}\|x\|_{C^{\eta}([0,T])}\|y_2-y_1\|_{Y^{\alpha}_{\theta-\alpha}(0,T_*)},
$$
which holds true for every $(s.t)\in [0,T_*]^2_{<}$ and some positive constant $\widetilde C$, 
depending on $\alpha$, $\theta$, $x$, $\eta$, $\mathfrak R$, $\sigma$ and $\omega$.
The starting point is still estimate \eqref{macau}.
The different assumptions on $\sigma'$ force us to estimate the term $\hat\delta_1(\sigma\circ y_1-\sigma\circ y_2)$ differently from what we did in  \eqref{stima_diff_contr_4}, to obtain that
\begin{align}
\label{stima_diff_lambda_nuova_3}
\|\hat \delta_1(\sigma\circ y_1-\sigma\circ y_2)\|_{\mathscr C^{\alpha}_{-(1+\omega)(\alpha-\theta)}((0,T_*]^2_<;X)}
\leq & \widetilde C_1\|y_1-y_2\|_{Y^{\alpha}_{\theta-\alpha}(0,T_*)}
\end{align}
for some positive constant $\widetilde C_1$ which depends on $\alpha,\theta,x,\eta$, $\mathfrak R$, $\sigma$ and $\omega$.

Let us fix $(s,t)\in(0,T_*]^2_<$. We recall that
\begin{align*}
& (\delta_1(\sigma\circ y_1-\sigma\circ y_2))(s,t) \\
= & \int_0^1( \sigma'(y_1(s)+r(\delta_1y_1)(s,t))-\sigma'(y_2(s)+r(\delta_1y_1)(s,t)))(\delta_1y_1)(s,t)dr \\
& +\sigma(y_2(s)+(\delta_1y_1)(s,t))-\sigma(y_2(s)+(\delta_1y_2)(s,t))=:\phi_1(s,t)+\phi_2(s,t),
\end{align*}
(see \eqref{isolaman}).
The term $\phi_2(s,t)$ is estimated by \eqref{stima_diff_contr_4}.
As far as $\phi_1$ is concerned, we get
\begin{align*}
&s^{(1+\omega)(\alpha-\theta)}\|\phi_1(s,t)\|_X\notag\\
\le &{\rm Lip}_{\sigma'}^{\alpha}(1+3\mathfrak{R}s^{\theta-\alpha})^{\omega-1}s^{(1+\omega)(\alpha-\theta)}\|y_1(s)-y_2(s)\|_{X_{\alpha}}\|(\delta_1y_1)(s,t)\|_X\notag\\
=&{\rm Lip}_{\sigma'}^{\alpha}(1+3\mathfrak R)^{\omega-1}s^{\alpha-\theta}\|y_1(s)-y_2(s)\|_{X_\alpha}s^{\alpha-\theta}\|(\delta_1y_1)(s,t)\|_X \notag \\
\leq & {\rm Lip}_{\sigma'}^{\alpha}(1+3\mathfrak R)^{\omega-1}\|y_1-y_2\|_{Y^{\alpha}_{\theta-\alpha}(0,T_*)}s^{\alpha-\theta}\|(\delta_1y_1)(s,t)\|_X
\end{align*}
and, arguing as in \eqref{stima_diff_contr_4}, we infer that
\begin{align*}
s^{\alpha-\theta}\|(\delta_1y_1)(s,t)\|_X
\leq & (K_{\alpha,0}+C_{\alpha,0})\|y_1\|_{Y^{\alpha}_{\theta-\alpha}(0,T_*)}|t-s|^\alpha. 
\end{align*}
Summing up, we have proved that
\begin{align*}
\|\delta_1(\sigma\circ y_1-\sigma\circ y_2)\|_{\mathscr C^{\alpha}_{-(1+\omega)(\alpha-\theta)}((0,T_*]^2_<;X)} 
\leq &  \widetilde C_2 \|y_1-y_2\|_{Y^{\alpha}_{\theta-\alpha}(0,T_*)}
\end{align*}
for some positive constant $\widetilde C_2$, which depends on $\|\psi\|_{X_\theta},\alpha,\theta,x,\eta$, $\mathfrak R$, $\sigma$ and $\omega$. 
From this estimate and \eqref{stima_diff_lambda_2}, \eqref{stima_diff_lambda_nuova_3} follows easily.

Now, we can complete the proof following the arguments in Theorem \ref{thm:ex_mild_sol_2}.
\end{proof}

\begin{remark}
{\rm Remark \ref{rmk-finale-mond} can be applied also to the results of this subsection.}
\end{remark}

\begin{example}
\label{example-3.18}
Let $A$ be the realization of the second-order derivative in $X=L^2((0,1))$, with homogeneous Dirichlet boundary conditions, and let $\sigma(\psi):=\hat\sigma\circ\psi$ for every $\psi\in L^2((0,1))$, where $\hat\sigma$ is a fixed function in $C_b^2(\R)$. In this situation, $X_{\lambda}=W^{2\lambda,2}((0,1))$ if $\lambda\le 1/2$, $X_{\lambda}=W^{2\lambda,2}((0,1))\cap W^{1,2}_0((0,1))$ if $\lambda\in (1/2,1)$, $X_1=D(A)=W^{2,2}((0,1))\cap W^{1,2}_0((0,1))$ and $X_{\lambda}=\{u\in W^{2,2}((0,1))\cap W^{1,2}_0((0,1)): u''\in X_{\lambda-1}\}$ if $\lambda\in (1,2)$.

Let $\alpha\in(\frac{1}{4},\frac{1}{2})$, $\theta\in[0,\alpha]$ and $\eta\in(\frac{1}{2},1)$ be such that $\eta>3\alpha-2\theta$. Under these conditions, Hypotheses $\ref{hyp:non_lineare_3}$ are satisfied with $\omega=1$. Indeed, it is easy to check that $\sigma$ is Lipschitz continuous from $X$ in itself. Moreover,
\begin{align*}
\|(\sigma'(y_2))(h)-(\sigma'(y_1))(h)\|^2_X
= & \int_0^1|\hat\sigma'(y_2(\xi))-\hat\sigma'(y_1(\xi))|^2|h(\xi)|^2d\xi \\
\leq & \|\hat\sigma\|^2_{C^2_b(\R)}\int_0^1|y_2(\xi)-y_1(\xi)|^2|h(\xi)|^2d\xi \\
\leq & \|\hat\sigma\|^2_{C^2_b(\R)}\|y_2-y_1\|^2_{C_b([0,1])}\|h\|_X^2 \\
\leq & \|\hat\sigma\|^2_{C^2_b(\R)}\|y_2-y_1\|^2_{X_{\alpha}}\|h\|_X^2, \end{align*}
for every $y_1,y_2\in X_\alpha$ and every $h\in X$,
since $X_\alpha\subset C_b([0,1])$ with a continuous embedding. Therefore, the second estimate in Hypothesis $\ref{hyp:non_lineare_3}$ is satisfied with $\omega=1$. To prove that $\sigma$ is locally Lipschitz continuous in $X_{\alpha}$  we observe that
\begin{align*}
(\sigma(y_1))(x)-(\sigma(y_2))(x)=(y_1(x)-y_2(x))\int_0^1\hat\sigma'(ry_1(x)+(1-r)y_2(x))dr    
\end{align*}
for every $y_1,y_2\in X_{\alpha}$ and almost every $x\in (0,1)$.
From this formula it follows that
\begin{align*}
\|\sigma(y_1)-\sigma(y_2)\|_{L^2((0,1))}\le \|\hat\sigma'\|_{\infty}\|y_2-y_1\|_{L^2((0,1))}.
\end{align*}
Moreover,
\begin{align*}
&(\sigma(y_1))(\xi)-(\sigma(y_2))(\xi)-
(\sigma(y_1))(\eta)+(\sigma(y_2))(\eta)\\
=& [y_1(\xi)-y_2(\xi)-y_1(\eta)+y_2(\eta)]\int_0^1\hat\sigma'(ry_1(\xi)+(1-r)y_2(\xi))dr\\
&+(y_1(\eta)-y_2(\eta))\int_0^1[\hat\sigma'(ry_1(\xi)+(1-r)y_2(\xi))-\hat\sigma'(ry_1(\eta)+(1-r)y_2(\eta))]dr
\end{align*}
so that
\begin{align*}
&|(\sigma(y_1))(\xi)-(\sigma(y_2))(\xi)-
(\sigma(y_1))(\eta)+(\sigma(y_2))(\eta)|^2\\
\le & 2\|\hat\sigma'\|_{\infty}^2|y_1(\xi)-y_2(\xi)-y_1(\eta)+y_2(\eta)|^2\\
&+2\|\hat\sigma''\|_{\infty}^2|y_1(\eta)-y_2(\eta)|^2(|y_1(\xi)-y_1(\eta)|^2+|y_2(\xi)-y_2(\eta)|^2)
\end{align*}
for almost every $\xi,\eta\in (0,1)$. Consequently,
we can estimate
\begin{align*}
&[\sigma(y_1)-\sigma(y_2)]_{W^{2\alpha,2}(0,1)}^2\\
\le &2\|\hat\sigma'\|_{\infty}^2[y_1-y_2]_{W^{2\alpha,2}((0,1))}^2
+2\|\hat\sigma''\|_{\infty}^2\|y_1-y_2\|_{\infty}^2
([y_1]_{W^{2\alpha,2}((0,1))}^2+[y_2]_{W^{2\alpha,2}((0,1))}^2)\\
\le &C_*(1+\|y_1\|_{X_{\alpha}}^2+\|y_2\|_{X_{\alpha}}^2)\|y_1-y_2\|_{X_{\alpha}}^2
\end{align*}
for some positive constant $C_*$, independent of $y_1$ and $y_2$.
We have so proved that $\sigma$ is locally Lipschitz continuous on $X_\alpha$ and the first condition in Hypotheses $\ref{hyp:non_lineare_3}$ is satisfied with $\omega=1$.

Note that, if $3\alpha<1$ and $\eta>3\alpha$ then we can take $\theta=0$, i.e., problem \eqref{omo_equation} admits a unique mild solution with initial datum $\psi\in X=L^2((0,1))$.

Finally, we observe that, if $\hat\sigma\in C^3_b(\R)$ and $\alpha\in (1/2,1)$, then the function $\sigma$ satisfies Hypotheses $\ref{hyp:non_lineare_3}$ with $\omega=2$.
\end{example}

\end{document}